\documentclass{article}
\usepackage[utf8]{inputenc}
\usepackage{amsmath,amsfonts,amssymb,amsthm,xcolor,algorithm2e}
\usepackage{authblk}
\usepackage{footnote}
\makesavenoteenv{tabular}
\theoremstyle{plain}
\newtheorem{theorem}{Theorem}
\numberwithin{theorem}{section}
\newtheorem{lemma}[theorem]{Lemma}

\newtheorem{cor}[theorem]{Corollary}

\newtheorem{prop}[theorem]{Proposition}

\newtheorem*{ques}{Question}
\newtheorem*{lemma*}{Lemma}
\theoremstyle{definition}
\newtheorem{remark}[theorem]{Remark}
\newtheorem{defn}[theorem]{Definition}

\newcommand{\N}{\mathbb{N}}
\newcommand{\OK}{\mathcal{O}_K}
\newcommand{\Z}{\mathbb{Z}}
\newcommand{\Q}{\mathbb{Q}}
\newcommand{\R}{\mathbb{R}}

\newcommand{\tr}{\mathrm{tr}}
\newcommand{\Nm}{\mathrm{N}}

\usepackage{lipsum}

\newcommand\blfootnote[1]{%
  \begingroup
  \renewcommand\thefootnote{}%
  \footnote{#1}%
  \endgroup
}

\title{Additively indecomposable quadratic forms over totally real number fields}
\author[1,2]{Magdal\'ena Tinkov\'a}
\author[3]{Pavlo Yatsyna}
\date{}
\affil[1]{Faculty of Information Technology, Czech Technical University in Prague, Th\'akurova 9, 160 00 Praha 6, Czech Republic\\ tinkova.magdalena@gmail.com\bigskip}
\affil[2]{Institute of Analysis and Number Theory, TU Graz, Kopernikusgasse 24/II, 8010 Graz, Austria\bigskip}
\affil[3]{Department of Algebra, Faculty of Mathematics and Physics, Charles University, Sokolovsk\'{a} 83, 186 00 Praha 8, Czech Republic\\ p.yatsyna@matfyz.cuni.cz}

\begin{document}

\maketitle
\begin{abstract}
    We give an upper bound for the norm of the determinant of additively indecomposable, totally positive definite quadratic forms defined over the ring of integers of totally real number fields. We apply these results to find lower and upper bounds for the minimal ranks of $n$-universal quadratic forms. For $\Q(\sqrt{2}),~\Q(\sqrt{3}),~\Q(\sqrt{5}),~\Q(\sqrt{6})$, and $\Q(\sqrt{21})$, we classify, up to equivalence, all classical, additively indecomposable binary quadratic forms.
\end{abstract}

\section{Introduction}

\blfootnote{\textup{2020} Mathematics Subject Classification: 11E12, 11E25, 11H50}
Mordell studied the representation of positive definite quadratic forms as the sum of squares of linear forms, which he called the ``new Waring's problem'' in \cite{M30}. Later, in \cite{M37}, he showed that if an integer positive definite quadratic form has a determinant larger than the \textit{Hermite constant}, then it is additively decomposable. Applying Blichfeldt's bounds for the Hermite constant for forms in $n$ variables \cite{Bl29} shows that the determinant has to be larger than $(\frac{2}{\pi})^n\Gamma(2+\frac{n}{2})^{2}$. The decomposition of the positive definite quadratic form $Q$ as a sum of two positive semi-definite quadratic forms $G$ and $H$, i.e. $Q=G+H$, was later investigated by Erd\"{o}s and Ko in \cite{EK38} and \cite{EK39}. Oppenheim restricted himself to decomposition into positive definite quadratic forms, classifying all such \textit{additively indecomposable}\footnote{Decomposition of quadratic forms (or lattices) and indecomposable quadratic forms usually means an \textit{orthogonal} decomposition, i.e. $Q=G\perp H$, see, e.g. \cite{O'M75,O'M80}, and we tried our best neither to abuse the conventions nor to introduce new terminology and went with the one that appeared in \cite{Pl94}.} quadratic forms of dimensions two and three \cite{O46I}, and then related the question of decomposability to the minimum of the quadratic form \cite{O46II}. One of the latest contributions to the topic is by Baeza and Icaza \cite{BI}, where they proved Mordell-like results for quadratic forms over totally real number fields, linking (the norm of) the determinant of quadratic forms to decomposability (over number fields). Their approach makes use of Humbert's reduction, and they show that there exists an absolute bound for the norm depending on the number of variables and the number field. One of the key results of this paper is the theorem below, which is an extension of Baeza and Icaza that bypasses Humbert's basis and incorporates information about additive properties of algebraic integers. When working over number fields, we restrict ourselves to \textit{totally positive definite} quadratic forms, i.e. quadratic forms that are positive definite under each embedding of the number field. Clearly, such quadratic forms represent \textit{totally positive integers} in the number fields, and for any such integers $\alpha,\beta$ we can define an ordering, where $\alpha \succeq \beta$ means that $\alpha$ is larger than or equal to $\beta$ in every embedding (see Section~\ref{sec:prelim} for details).

\begin{theorem} \label{thm:main}
Let $Q$ be a totally positive definite quadratic form in $n$ variables over the ring of integers $\OK$ of a totally real number field $K$, and let $C\in\R^+$ be such that if $\beta\in K^{+}$ with $\Nm_{K/\Q}(\beta)\geq C$, then there exists $\alpha \in \OK^+$ such that $\beta\succeq \alpha$. Then, if $\Nm_{K/\Q}(\det(Q))\geq\gamma_{K,n}^n C^n$, then $Q$ is additively decomposable as $Q=H+\alpha L^2$, where $\alpha \in \OK^+$, $L,H$ have coefficients in $\OK$ and $n$ variables, $L$ is a linear form, while $H$ is a totally positive semi-definite quadratic form.
\end{theorem}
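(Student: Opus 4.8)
The plan is to mimic Mordell's classical argument but to work directly with the $\OK$-lattice rather than passing through a Humbert-reduced basis. Let $L_0$ be the lattice $\OK^n$ equipped with the bilinear form $B$ attached to $Q$, so that $Q(x)=B(x,x)$. The key quantity is the \emph{minimum} $\mathfrak{m}(Q)=\min\{\,Q(v):v\in\OK^n\setminus\{0\}\,\}$, more precisely a totally positive integer $\mu$ representing $Q$ whose norm we can control. By the Hermite-type inequality over totally real fields (the number-field analogue of Hermite's constant, which is where $\gamma_{K,n}$ enters), there exists a nonzero $v\in\OK^n$ with
\[
\Nm_{K/\Q}\!\bigl(Q(v)\bigr)\le \gamma_{K,n}^{\,n}\,\Nm_{K/\Q}\!\bigl(\det(Q)\bigr)^{1/1}\cdot\text{(appropriate power)},
\]
so that under the hypothesis $\Nm_{K/\Q}(\det Q)\ge \gamma_{K,n}^n C^n$ we may instead go the other way: we want a vector $v$ on which $Q(v)=:\beta$ has \emph{large} norm, namely $\Nm_{K/\Q}(\beta)\ge C$. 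Here one uses that $\det(Q)$ being large forces the \emph{last} successive minimum (or the covering radius direction) to be large; concretely, pick a basis $v_1,\dots,v_n$ of $\OK^n$ in which the Gram matrix is, after a unimodular change, ``reduced'' enough that $\prod_i \Nm(Q(v_i))$ is comparable to $\Nm(\det Q)$ up to the factor $\gamma_{K,n}^n$, and then the largest diagonal entry $\beta=Q(v_n)$ satisfies $\Nm_{K/\Q}(\beta)\ge C$.

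Once we have such a $v$ with $\beta=Q(v)$ and $\Nm_{K/\Q}(\beta)\ge C$, the hypothesis on $C$ supplies $\alpha\in\OK^+$ with $\beta\succeq\alpha$. Complete $v$ to an $\OK$-basis $v=v_1,v_2,\dots,v_n$ of $\OK^n$ (possible since $v$ is part of a basis — one chooses $v$ primitive, which is automatic for a minimal-type vector after dividing out content). Write the linear form $L$ as the first coordinate functional in this basis, i.e. $L(x_1,\dots,x_n)=x_1$ after the change of variables; then $\alpha L^2$ is a totally positive semi-definite quadratic form over $\OK$, and we set $H:=Q-\alpha L^2$. It remains to check that $H$ is totally positive semi-definite. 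In each real embedding $\sigma$, $\sigma(Q)$ is a genuine positive definite real form, $\sigma(\beta)=\sigma(Q)(v)\ge\sigma(\alpha)>0$, and $\sigma(\alpha L^2)$ is the rank-one form $\sigma(\alpha)\,\sigma(L)^2$. The standard fact (essentially Mordell's lemma, applied embedding by embedding) is that if $R$ is a positive definite real quadratic form, $w$ a vector, and $0<c\le R(w)$, and $\ell$ is the linear functional that is $1$ on $w$ and $0$ on a chosen complement, then $R-c\,\ell^2\succeq 0$ iff $c\le R(w) - (\text{stuff involving off-diagonal})$; choosing the complement $v_2,\dots,v_n$ to be \emph{$B$-orthogonal} to $v_1$ (Gram-Schmidt over $\sigma(K)=\R$) kills the cross terms, and then $\sigma(R)-\sigma(c)\ell^2$ has the block form $\mathrm{diag}(\sigma(\beta)-\sigma(\alpha), \text{(positive definite)})\succeq 0$. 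The subtlety is that the $B$-orthogonal complement need not be defined over $\OK$, only over $K$; but $L$ only needs $\OK$-coefficients as a \emph{linear form on $\OK^n$}, which is fine since $L=x_1$ in the original integral basis, and positive semidefiniteness of $H$ is a condition that can be checked over $\R$ after extending scalars, so integrality of $H$'s Gram matrix is inherited from $Q$ and $\alpha L^2$ both being integral.

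The main obstacle, and the place where the argument genuinely extends Baeza--Icaza rather than just reproducing Mordell, is the first step: producing a vector $v\in\OK^n$ with $\Nm_{K/\Q}(Q(v))\ge C$ purely from the size of $\Nm_{K/\Q}(\det Q)$, using the field-dependent constant $\gamma_{K,n}$ and \emph{without} invoking a Humbert basis. I expect this to follow from a Minkowski-type / geometry-of-numbers argument on the lattice $\OK^n\otimes_\Z\R\cong\R^{dn}$ (where $d=[K:\Q]$) with the quadratic form $\bigoplus_\sigma \sigma(Q)$: its covolume is (up to the discriminant of $K$, which one absorbs into $\gamma_{K,n}$) a power of $\Nm_{K/\Q}(\det Q)$, so there is a basis of $\OK^n$ whose product of $\Nm(Q(v_i))$ is at most $\gamma_{K,n}^n\Nm_{K/\Q}(\det Q)$ and at least $\Nm_{K/\Q}(\det Q)/\gamma_{K,n}^{?}$; comparing the largest factor to the geometric mean $\Nm_{K/\Q}(\det Q)^{1/n}$ and invoking $\Nm_{K/\Q}(\det Q)^{1/n}\ge \gamma_{K,n}^{\cdot} C$ gives a diagonal entry of norm $\ge C$. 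Making the bookkeeping of the powers of $\gamma_{K,n}$ match the exponent $n$ in the statement is the delicate part; everything after that is the embedding-wise positive-semidefiniteness check, which is routine.
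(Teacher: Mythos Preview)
Your argument has a genuine gap at the semi-definiteness step. Having $\beta=Q(v_1)\succeq\alpha$ does \emph{not} make $H=Q-\alpha x_1^2$ totally positive semi-definite. In any embedding, write the Gram matrix as $\begin{pmatrix}\beta & b^{t}\\ b & Q'\end{pmatrix}$; the Schur-complement criterion says $Q-\alpha x_1^2\succeq 0$ iff $\alpha\le \beta-b^{t}(Q')^{-1}b=\dfrac{\det Q}{\det Q'}=\dfrac{\det Q}{A_{11}}$, and this quantity is always $\le\beta$ and can be much smaller (over $\Z$, take $Q=\bigl(\begin{smallmatrix}100&10\\10&2\end{smallmatrix}\bigr)$: then $\beta=100$ but $\det Q/A_{11}=50$, and $Q-90x_1^2$ is indefinite). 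Your Gram--Schmidt patch does not rescue this: if $w_1=v_1,w_2,\dots,w_n$ is the $B$-orthogonal basis, the linear form $L=v_1^{*}$ is \emph{not} $w_1^{*}$; one has $L=w_1^{*}+\sum_{j\ge 2}c_jw_j^{*}$ with $c_j$ the $v_1$-coordinate of $w_j$, so $\alpha L^2$ acquires cross terms in the $w$-basis and the ``block form $\mathrm{diag}(\sigma(\beta)-\sigma(\alpha),\text{PD})$'' you assert is false.

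The paper's proof addresses precisely this. What is needed is not a vector on which $Q$ is large, but an index $j$ for which the cofactor $A_{jj}$ has \emph{small} norm, so that $\Nm_{K/\Q}(\det Q/A_{jj})\ge C$ and the hypothesis furnishes $\alpha$. Since $A_{jj}$ is a value of the adjoint form $Q_{\mathrm{adj}}$, one applies the generalised Hermite constant to $Q_{\mathrm{adj}}$: after a unimodular change of basis one may take $\Nm_{K/\Q}(A_{jj})=\min(Q_{\mathrm{adj}})\le\gamma_{K,n}\,\Nm_{K/\Q}(\det Q_{\mathrm{adj}})^{1/n}$, and using $\det Q_{\mathrm{adj}}=(\det Q)^{n-1}$ this yields $\Nm_{K/\Q}(\det Q/A_{jj})\ge \Nm_{K/\Q}(\det Q)^{1/n}/\gamma_{K,n}\ge C$. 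So the missing idea is to dualise: look for a small value of the adjoint form, not a large value of $Q$ itself.
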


Above, $K^+$ is the set of all totally positive numbers, $\Nm_{K/\Q}(\alpha)$ is the norm of an algebraic number $\alpha\in K$, and $\det(Q)$ is the determinant of the Gram matrix corresponding to $Q$.  
The constant $\gamma_{K,n}$ is the generalised version of the Hermite constant (which is defined, among other terms, in the next section), which depends only on the number field and the number of variables. For a generic totally real number field $K$, we can assume that $C=\Delta_K+1$, where $\Delta_K$ is the discriminant of $K$ \cite[Theorem 5]{KY}. The value of $C$ can be improved, as we show in some examples in this paper. 

The additive decomposability of quadratic forms is useful in questions related to the universality of quadratic forms, where a quadratic form $Q$ is said to be \textit{$n$-universal} if it represents all positive definite quadratic forms in $n$ variables. For example, to determine the minimal ranks of $n$-universal quadratic forms over $\Z$, one needs to know the additively indecomposable quadratic forms, as in \cite{Pl94}. In \cite{Oh00}, the precise values of the minimal ranks for $n=1,\ldots,10$ are proved. Over number fields, Sasaki showed in \cite{Sa1} that in almost all real quadratic fields, a 2-universal quadratic form requires at least seven variables. Such quadratic forms of rank six exist over the rings of integers of $\Q(\sqrt{2})$ and $\Q(\sqrt{5})$. Research into $n$-universal quadratic forms and related questions has recently gone through a renaissance period, much credited to the 15 and 290 theorems \cite{BH}, and surveyed here~\cite{E99,Ka23,Ki04,Ra04}.

We were also able to apply our findings to bound the ranks of $n$-universal quadratic forms, but before stating the result we have to introduce the necessary terminology and notation. We denote by $\mathcal{I}$ the set of representatives of indecomposable integers in $K$ up to multiplication by squares of units (see Preliminaries for the definition). Note that this set is finite, which follows, for example, from the fact that the norm of indecomposable integers is bounded above by a constant depending on $K$. 
The set $\mathcal{F}_n$ consists of the representatives of additively indecomposable quadratic forms in $n$ variables over $\OK$. Finally, we need the \textit{g-invariant}, denoted by $g_{\OK}(n)$ (see, e.g. \cite{Ic2}), which generalises the Pythagoras number and measures the minimum number of squares of integral linear forms required to represent all eligible quadratic forms in $n$ variables. We show the following.

\begin{theorem} \label{thm:upper_bound}
Let $K$ be a totally real number field. There exists an $n$-universal totally positive definite quadratic form of rank at most
\[
g_{\mathcal{O}_K}(n) \cdot \#\mathcal{I} + n \cdot \gamma_{K,n}^n \cdot C^n \cdot \#\mathcal{F}_n,
\]
where $C$ is the constant from Theorem~\ref{thm:main}.  
\end{theorem}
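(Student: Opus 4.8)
The plan is to construct an explicit $n$-universal form $\mathcal{U}$ as an orthogonal sum of two blocks: a \emph{squares block}, assembled from the indecomposable integers in $\mathcal{I}$ and the $g$-invariant, and an \emph{indecomposables block}, assembled from the forms in $\mathcal{F}_n$; then to prove that every totally positive definite $Q$ over $\OK$ in $n$ variables is represented by $\mathcal{U}$. The underlying principle used throughout is elementary: if a form $\mathcal{A}$ represents $G$ and a form $\mathcal{B}$ represents $H$ (via $\OK$-linear substitutions), then $\mathcal{A}\perp\mathcal{B}$ represents $G+H$; so it suffices to decompose an arbitrary $Q$ into summands each represented by one block of $\mathcal{U}$, and to control the number of summands of each type.

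First I would show, by iterating Theorem~\ref{thm:main}, that every totally positive definite $Q$ over $\OK$ in $n$ variables can be written as $Q=\sum_j \alpha_j L_j^2 + R$, where $\alpha_j\in\OK^+$, each $L_j$ is a linear form in $n$ variables over $\OK$, and $R$ is a totally positive semi-definite form in $n$ variables over $\OK$ with $\Nm_{K/\Q}(\det R)<\gamma_{K,n}^{\,n}C^n$. Indeed, as long as the current form has determinant of norm at least $\gamma_{K,n}^{\,n}C^n$, Theorem~\ref{thm:main} splits off one more term $\alpha L^2$; the process terminates because $\det$ of a form over $\OK$ is a totally positive algebraic integer, hence has positive integral norm, and the matrix-determinant lemma shows $\det(H+\alpha L^2)=\det(H)\bigl(1+\alpha L^tH^{-1}L\bigr)$, so the norm of the determinant strictly decreases at each step while the form stays definite (if it degenerates, the determinant norm is $0$ and we stop). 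One must take care of the possibility that $R$, or an intermediate form, degenerates to fewer effective variables; I would handle this by induction on $n$, re-running the argument in the reduced number of variables and using that the claimed bound is nondecreasing in $n$ (as $g_{\OK}(\cdot)$, $\gamma_{K,\cdot}$ and $\#\mathcal{F}_{\cdot}$ are nondecreasing and $\#\mathcal{I}$ is constant), or else by reading $\mathcal{F}_n$ as allowing degenerate forms.

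Next I would treat the two parts of the decomposition. For the remainder $R$: decompose it into additively indecomposable forms $R=F_1+\cdots+F_q$. Minkowski's determinant inequality gives $\det R\succeq \sum_i \det F_i$ in the ordering, and since each $\det F_i$ is a totally positive integer with $\Nm_{K/\Q}(\det F_i)\ge 1$, while $\Nm_{K/\Q}(a+b)\ge \Nm_{K/\Q}(a)+\Nm_{K/\Q}(b)$ for totally positive $a,b$, we obtain $q\le \Nm_{K/\Q}(\det R)<\gamma_{K,n}^{\,n}C^n$, hence $q\le\lfloor\gamma_{K,n}^{\,n}C^n\rfloor$. Each $F_i$ is $\OK$-equivalent to a member of $\mathcal{F}_n$, so the orthogonal sum of $\lfloor\gamma_{K,n}^{\,n}C^n\rfloor$ copies of each $F\in\mathcal{F}_n$ represents $\sum_i F_i$. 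For the terms $\alpha_j L_j^2$: write each $\alpha_j$ as a sum of indecomposable integers (every totally positive integer is such a sum), and each indecomposable integer as $\varepsilon^2\delta$ with $\varepsilon\in\OK^{\times}$ and $\delta\in\mathcal{I}$; then $\alpha_j L_j^2$ becomes a sum of terms $\delta M^2$ with $\delta\in\mathcal{I}$ and $M$ a linear form in $n$ variables over $\OK$. Collecting these by $\delta$, for each fixed $\delta$ the sum of the corresponding $M^2$'s is a totally positive semi-definite form in $n$ variables that is a sum of squares of integral linear forms, hence, by the definition of the $g$-invariant, a sum of at most $g_{\OK}(n)$ squares of linear forms in $n$ variables over $\OK$; consequently $\sum_j \alpha_j L_j^2$ is represented by the orthogonal sum, over $\delta\in\mathcal{I}$, of one copy each of $\delta(x_1^2+\cdots+x_{g_{\OK}(n)}^2)$.

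Finally I would set $\mathcal{U}$ to be the orthogonal sum consisting of, for each $\delta\in\mathcal{I}$, one copy of $\delta(x_1^2+\cdots+x_{g_{\OK}(n)}^2)$, together with, for each $F\in\mathcal{F}_n$, exactly $\lfloor\gamma_{K,n}^{\,n}C^n\rfloor$ orthogonal copies of $F$. Then $\mathcal{U}$ is totally positive definite, being an orthogonal sum of totally positive definite forms; by the two paragraphs above it represents every totally positive definite $Q$ in $n$ variables, so it is $n$-universal; and its rank is $g_{\OK}(n)\cdot\#\mathcal{I}+n\lfloor\gamma_{K,n}^{\,n}C^n\rfloor\cdot\#\mathcal{F}_n\le g_{\OK}(n)\cdot\#\mathcal{I}+n\cdot\gamma_{K,n}^{\,n}\cdot C^n\cdot\#\mathcal{F}_n$, as claimed. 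The main obstacle I anticipate is the bookkeeping around degenerate remainders—making the induction on $n$ airtight and verifying that re-running the peeling argument in fewer variables keeps the total rank inside the stated bound—together with confirming rigorously that $R$ decomposes into at most $\gamma_{K,n}^{\,n}C^n$ additively indecomposable pieces; everything else is routine composition of $\OK$-linear representations.
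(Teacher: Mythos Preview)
Your proposal is correct and follows essentially the same route as the paper's proof (given as Theorem~4.2): iterate Theorem~\ref{thm:main} to peel off terms $\alpha L^2$, absorb the squares part into $\bigoplus_{\delta\in\mathcal{I}}\delta(y_1^2+\cdots+y_{g_{\OK}(n)}^2)$ via the $g$-invariant, and control the remainder by decomposing it into additively indecomposable forms and bounding their number through Minkowski's determinant inequality. The only difference is bookkeeping---the paper bounds the number of copies of each fixed $H\in\mathcal{F}_n$ by $\lfloor\gamma_{K,n}^nC^n/\Nm_{K/\Q}(\det H)\rfloor$ rather than bounding the total count as you do, which yields a sharper version (Theorem~4.2) but coincides with your argument for the cruder Theorem~\ref{thm:upper_bound}.
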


By the symbol $\#A$, we mean the size of a finite set $A$.
We have a counterpart for the above theorem, in Theorem~\ref{thm:uni_lowerbound}, where we produce a lower bound that depends on the existence of totally positive definite quadratic forms with specified integers on the diagonal.

Although in a general totally real number field our understanding of the set $\mathcal{I}$ is limited, it is not the case for real quadratic fields \cite{DS,Pe}. We use it to show the following existence result.

\begin{theorem}\label{thm:exist}
    Let $K$ be a real quadratic field. Then there exists at least one additively indecomposable binary quadratic form over $\OK$.
\end{theorem}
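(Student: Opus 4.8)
The plan is to produce one such form explicitly and to verify by hand that it has no additive decomposition; the description of the indecomposable integers of real quadratic fields in \cite{DS,Pe} is used both to choose the form and to carry out the verification.

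Write a binary totally positive definite form over $\OK$ as $Q=\delta x^2+2bxy+\gamma y^2$ with $\delta,\gamma\in\OK^+$, $b\in\OK$, and $\det Q=\delta\gamma-b^2\in\OK^+$, and let $Q^\ast=\gamma x^2-2bxy+\delta y^2$ be its adjugate form, so $\det Q^\ast=\det Q$ and $Q^\ast$ is $\OK$-equivalent to $Q$ (explicitly $Q^\ast(x,y)=Q(-y,x)$). Suppose $Q=G+H$ with $G,H$ nonzero totally positive semi-definite integral forms; then either both $G,H$ are totally positive definite, or one of them, say $G=\alpha L^2$ with $\alpha\in\OK^+$ and $L$ a nonzero $\OK$-linear form, has rank one. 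In the first case $\det Q=\det G+\det H+c$ with $c=g_{11}h_{22}+g_{22}h_{11}-2g_{12}h_{12}$, and since $G,H$ are positive definite under every embedding we have $\det G,\det H\succ 0$ and, by the arithmetic--geometric mean inequality, $c\succ 0$; hence $\det Q\succ\det G+\det H$. Because $[K:\Q]=2$, for totally positive integers $\mu,\nu$ one has $\Nm_{K/\Q}(\mu+\nu)\ge\Nm_{K/\Q}(\mu)+\Nm_{K/\Q}(\nu)+2\sqrt{\Nm_{K/\Q}(\mu)\Nm_{K/\Q}(\nu)}\ge 4$, so this case is impossible once $\Nm_{K/\Q}(\det Q)\le 4$; it is also impossible when $\delta$ or $\gamma$ is an indecomposable integer, since $\delta=g_{11}+h_{11}$ with $g_{11},h_{11}\in\OK^+$ then cannot hold. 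In the second case, writing $v\in\OK^2$ for the coefficient vector of $L$, the condition $Q-\alpha L^2\succeq 0$ is equivalent to $\alpha\,Q^\ast(v)\preceq\det Q$ (since $v^{\top}Q^{-1}v=Q^\ast(v)/\det Q$ in each embedding), whence $\Nm_{K/\Q}(\alpha)\Nm_{K/\Q}(Q^\ast(v))\le\Nm_{K/\Q}(\det Q)$ with both factors on the left at least $1$ because $Q^\ast(v)\in\OK^+$.

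Accordingly, it is enough to exhibit a totally positive definite binary $Q$ over $\OK$ with $\Nm_{K/\Q}(\det Q)$ small enough to kill the first case and such that the second case does not occur. When $\det Q$ is a totally positive unit, the last inequality forces $\alpha$ and $Q^\ast(v)$ to be totally positive units with $\alpha\,Q^\ast(v)=\det Q$, i.e. $Q^\ast$ --- equivalently $Q$ --- would represent a totally positive unit; so one seeks a $Q$ with $\det Q$ a totally positive unit that represents no totally positive unit, and for number fields that obstruct this one instead keeps $\Nm_{K/\Q}(\det Q)\le 4$. The form is chosen to be ``lopsided'': one takes $\delta$ to be, or to be built from, an indecomposable integer of $K$ having a conjugate close to $0$, so that one real embedding of $Q$ is a very thin elongated form. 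Over $\Q(\sqrt 2)$ the model is $(2+\sqrt 2)x^2+2xy+(2-\sqrt 2)y^2$, which has determinant $1$; in general the list of indecomposable integers of $K$ is used to make $\det Q$ as small as possible while keeping $Q$ this skew, and one then checks that $Q$ is totally positive definite.

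The hard part is excluding the second case, i.e. showing the chosen $Q$ represents no totally positive unit. This is equivalent to the minimum of $Q$ having norm at least $2$ over $\Q$, which by Humbert's reduction is attained at a vector $v$ lying in a bounded region and so comes down to a finite computation once $Q$ is chosen. The subtlety is that this can genuinely fail for small determinant: for some fields every totally positive definite binary form whose determinant is a totally positive unit already represents a totally positive unit, its minimum being pushed down by the Humbert constant of $K$. Hence the delicate point of the proof is to choose $Q$ --- and in particular to decide how small $\Nm_{K/\Q}(\det Q)$ may be taken while both cases above are still excluded --- using the explicit arithmetic of the indecomposable integers of $K$, not merely bounds on their norms; this is where the bulk of the work lies.
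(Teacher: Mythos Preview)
Your proposal is a plan, not a proof: you explicitly say that excluding the rank-one case ``is where the bulk of the work lies'' and then do not do it. Beyond incompleteness, two remarks.

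First, you make the problem harder than stated. The theorem does not ask for a \emph{classical} form, and the paper's non-classical argument is one line (Theorem~\ref{thm:exist_nc}): $x^2+xy+y^2$ is additively indecomposable over every real quadratic $\OK$, simply because $1$ is an indecomposable integer. If $Q=G+H$ with $G,H$ nonzero and totally positive semi-definite, then $1=g_{11}+h_{11}$ forces one summand to vanish, whence the corresponding off-diagonal entry vanishes by semi-definiteness; the same argument on the $y^2$-coefficient then kills the middle coefficient, a contradiction. By writing $Q=\delta x^2+2bxy+\gamma y^2$ you silently restrict to classical forms and spend the whole proposal on a case the theorem does not require.

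Second, even for classical forms your rank-one analysis is avoidable. The semi-definiteness trick you use for case~1 in fact shows --- with no extra work --- that when \emph{both} diagonal entries are indecomposable and $b\neq 0$, the form admits no nontrivial semi-definite splitting whatsoever (this is Proposition~\ref{prop:inde_from_inde} in the paper). The obstruction you flag, that in some fields every unit-determinant classical form represents a totally positive unit, is genuine --- over $\Q(\sqrt 5)$ there is no classical additively indecomposable form of unit determinant --- but the paper handles it not via Humbert reduction or minima, but by writing down explicit forms with small rational leading coefficient ($2$, $3$, or $4$) depending on $D\bmod 4$ (or $\bmod\,12$) and checking indecomposability through a short case split on the few ways that leading coefficient decomposes into totally positive integers (Propositions~\ref{prop:exi3}--\ref{prop:exi1}), with $D=5,13,17$ handled separately.
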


 Incorporating recent density results for universal quadratic forms over real quadratic fields \cite{KYZ}, we get a quantitative result about the average number of additively indecomposable binary quadratic forms:

\begin{theorem}\label{thm:almost_all_intro}
    Let $\varepsilon>0$. For almost all square-free rational integers $D>0$, the number of non-equivalent, additively indecomposable binary quadratic forms over $\mathcal{O}_{ \Q(\sqrt{D})}$ is at least $\frac{1}{2}D^{\frac{1}{12}-\varepsilon}$. 
\end{theorem}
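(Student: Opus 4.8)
The plan is to combine the existence result of Theorem~\ref{thm:exist} (which guarantees at least one additively indecomposable binary form) with the density/counting input coming from \cite{KYZ}, and to promote a single indecomposable form into many inequivalent ones by twisting it by indecomposable integers. More precisely, let me start from one additively indecomposable binary form $Q$ over $\OK$ with $K=\Q(\sqrt D)$, produced by Theorem~\ref{thm:exist}. For each indecomposable integer $\alpha\in\OK^+$ I would consider the twisted form $\alpha\cdot Q$ (i.e. multiply the Gram matrix by $\alpha$); the key elementary observation to nail down is that additive indecomposability is preserved under such scaling, since if $\alpha Q=H_1+H_2$ with $H_i$ totally positive semi-definite integral, then dividing formally by $\alpha$ would force a decomposition of $Q$ — one must check this stays within $\OK$, or alternatively argue directly that the content of $\alpha Q$ forces the $H_i$ to be divisible by $\alpha$ when $\alpha$ is indecomposable. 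One then needs a criterion to ensure that $\alpha Q$ and $\beta Q$ are inequivalent for ``most'' pairs $\alpha,\beta$: equivalence of $\alpha Q$ and $\beta Q$ would relate the represented sets $\alpha\cdot Q(\OK^2)$ and $\beta\cdot Q(\OK^2)$, and in particular force $\alpha/\beta$ (up to square units) to lie in a restricted set; since the number of indecomposable integers up to unit squares grows with $D$, this yields many equivalence classes.

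Here is where the quantitative input enters. By the results on indecomposable integers in real quadratic fields \cite{DS,Pe} and the density statement in \cite{KYZ}, for almost all square-free $D>0$ the number of indecomposable integers in $\OK$, counted up to multiplication by squares of units, is at least of order $D^{1/12-\varepsilon}$ (this is essentially the size of $\mathcal I$, which is governed by the continued fraction expansion of $\sqrt D$ or $\frac{1+\sqrt D}{2}$ and hence by the regulator/class number behaviour that \cite{KYZ} controls on average). Partitioning these indecomposable integers into equivalence classes under the relation ``$\alpha\sim\beta$ iff $\alpha Q\cong\beta Q$'', each class has bounded size (bounded by the number of units times a small combinatorial factor coming from $\mathrm{Aut}(Q)$), so the number of inequivalent forms among $\{\alpha Q\}$ is at least $\frac12\#\mathcal I\ge \frac12 D^{1/12-\varepsilon}$ after absorbing constants and adjusting $\varepsilon$. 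The factor $\frac12$ in the statement is exactly the slack one spends on bounding the class sizes.

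The main obstacle, I expect, is the inequivalence step: showing that the twists $\alpha Q$ for distinct indecomposable $\alpha$ (mod unit squares) are genuinely non-equivalent, or at least that they fall into $\gg\#\mathcal I$ classes. Two forms $\alpha Q$ and $\beta Q$ are $\OK$-equivalent precisely when there is $M\in GL_2(\OK)$ with $M^{T}(\alpha G)M=\beta G$ for the Gram matrix $G$ of $Q$; taking determinants gives $\Nm(\alpha/\beta)\in(\OK^\times)^2$ after squaring, i.e. $\alpha/\beta$ differs from a totally positive unit by a square in $K$, but this alone is not enough — one needs the local/archimedean structure or the represented-values argument sketched above to cut down further. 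A clean way to finish is to observe that $\alpha Q$ represents $\alpha\cdot m(Q)$ as its minimum (where $m(Q)$ is the arithmetic minimum of $Q$, a fixed totally positive integer), so the minimum of $\alpha Q$ determines $\alpha$ up to a factor in the finite set of values $n(Q):=\{Q(v):v\in\OK^2,\ Q(v)\text{ minimal-ish}\}/(\OK^\times)^2$; hence each equivalence class contains at most $\#\big(n(Q)/(\OK^\times)^2\big)$ of the $\alpha$'s, a quantity independent of $D$ once $Q$ is fixed. Feeding the \cite{KYZ} lower bound $\#\mathcal I\gg D^{1/12-\varepsilon}$ then yields the claimed bound, completing the proof.
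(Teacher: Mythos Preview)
Your approach has a genuine gap at its core: the claim that $\alpha Q$ remains additively indecomposable when $\alpha\in\OK^+$ is an indecomposable integer and $Q$ is additively indecomposable is simply false, so there is nothing to ``nail down''. Already over $\Z$ the non-classical form $Q(x,y)=x^2+xy+y^2$ is additively indecomposable, yet $2Q(x,y)=(x+y)^2+(x^2+y^2)$ decomposes as a sum of two positive semi-definite integral forms. The suggested workaround, that the ``content'' of $\alpha Q$ forces each summand $H_i$ in a decomposition $\alpha Q=H_1+H_2$ to be divisible by $\alpha$, also fails: positive semi-definite summands need not share any divisibility with $\alpha$, as the example shows. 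Since the forms $\alpha Q$ you produce are generally not additively indecomposable, the inequivalence discussion (which, incidentally, is also not quite right: the minimum of $\alpha Q$ need not be attained at the same vector as the minimum of $Q$, so it is not simply $\alpha\cdot m(Q)$) is moot.

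The paper's route avoids twisting altogether. For each indecomposable integer $\alpha\in\OK^+$ one writes down the explicit form
\[
Q_\alpha(x,y)=\alpha x^2+2xy+\alpha' y^2,\qquad \det(Q_\alpha)=\Nm_{K/\Q}(\alpha)-1,
\]
and uses the elementary Proposition~\ref{prop:inde_from_inde}: if both diagonal entries of a totally positive definite binary form are indecomposable integers and the cross term is nonzero, the form is additively indecomposable. Indeed, any splitting of the diagonal forces one summand to have a zero diagonal entry, and positive semi-definiteness then kills its cross term too, so the splitting is trivial. Inequivalence is then for free: the continued-fraction description of indecomposable integers in real quadratic fields furnishes at least $\tfrac{u-1}{2}$ indecomposables with pairwise distinct norms (where $u$ is the largest odd partial quotient), hence the corresponding $Q_\alpha$ have pairwise distinct rational determinants. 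The density input from \cite{KYZ} is applied to this partial quotient, not to $\#\mathcal I$ directly, and yields $u\ge D^{1/12-\varepsilon}$ for almost all $D$.
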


Even if restricted to quadratic forms with a fixed determinant, we can find real number fields with arbitrarily many additively indecomposable binary forms with the same determinant:

\begin{theorem}\label{thm:fixed_det_intro}
   Let $n\in\N$. Then there exist a square-free integer $D>1$ and $d\in\N$ such that there are at least $n$ non-equivalent, additively indecomposable binary quadratic forms of determinant $d$ over $\mathcal{O}_{ \Q(\sqrt{D})}$.  
\end{theorem}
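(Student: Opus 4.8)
The plan is to build, for each $n$, a real quadratic field $K$ carrying more than $n$ inequivalent additively indecomposable binary $\OK$-forms, all of determinant $d=1$; the constants $C$ and $\gamma_{K,n}$ of Theorem~\ref{thm:main} play no role. First I would prove a clean description of additive (in)decomposability in the unimodular case: \textbf{a totally positive definite binary $\OK$-form $Q$ with $\det Q=1$ is additively decomposable if and only if $Q$ is equivalent to $\langle\eta,\eta^{-1}\rangle$ for some totally positive unit $\eta\in\OK^{\times}$.} One direction is immediate, since $\langle\eta,\eta^{-1}\rangle=\eta x^{2}+\eta^{-1}y^{2}$ splits off two nonzero totally positive semidefinite forms. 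For the other, write $Q=Q_{1}+Q_{2}$ with $Q_{1},Q_{2}$ nonzero totally positive semidefinite over $\OK$. Applying $\det(A+B)\ge\det A+\det B$ for real positive semidefinite $2\times2$ matrices at each real place gives $1=\det Q\succeq\det Q_{1}+\det Q_{2}$; since in a real quadratic field every nonzero totally nonnegative integer is totally positive, hence of norm $\ge1$, the two forms cannot both have rank $2$. So, after renaming, $Q_{2}=\lambda\ell^{2}$ with $\lambda\in\OK^{+}$ and $\ell$ a primitive integral linear form. Extending $\ell$ to an $\OK$-basis and writing $Q$ in it with Gram matrix $\bigl(\begin{smallmatrix}a&b\\ b&c\end{smallmatrix}\bigr)$, one gets $\det Q_{1}=1-\lambda c\succeq0$; as $c$ (the value of $Q$ at the second basis vector) is a totally positive integer and $\lambda$ is totally positive, $\Nm(\lambda c)\le1$ forces $\lambda c=1$, so $\lambda$ is a unit and $\det Q_{1}=0$. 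Hence $Q_{1}$ is also rank one, $Q=\lambda\ell^{2}+\mu(\ell')^{2}$, and comparing determinants shows $\{\ell,\ell'\}$ is a basis with $\lambda\mu=1$, i.e.\ $Q\cong\langle\lambda,\lambda^{-1}\rangle$.

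Next, by Dirichlet's unit theorem the totally positive units of $\OK$ modulo squares of units form a group of order at most $2$, and $\langle\eta,\eta^{-1}\rangle\cong\langle\eta',\eta'^{-1}\rangle$ whenever $\eta/\eta'$ is a square unit (also $\langle\eta,\eta^{-1}\rangle\cong\langle\eta^{-1},\eta\rangle$), so there are at most two ``split'' unimodular classes. Thus every totally positive definite binary $\OK$-form of determinant $1$ outside these few classes is additively indecomposable, and it remains to exhibit real quadratic fields over which the number of equivalence classes of totally positive definite binary $\OK$-forms of determinant $1$ is arbitrarily large.

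For this I would invoke the classical correspondence between binary quadratic forms of a fixed discriminant and ideal classes, transported from $\Z$ to the Dedekind domain $\OK$: a form of determinant $1$ has discriminant $-4$, and the $\mathrm{GL}_{2}(\OK)$-classes of totally positive definite forms of discriminant $-4$ correspond, up to an at most $2$-to-$1$ ambiguity and a correction by a bounded group coming from the infinite places and units of $K$, to the Picard classes of the quadratic $\OK$-order $\OK[i]$ inside the totally imaginary field $L:=K(\sqrt{-1})$. Taking $K=\Q(\sqrt D)$ with $D=p_{1}\cdots p_{k}$ a product of $k$ distinct primes $\equiv1\pmod4$ chosen so that $D$ is squarefree and $>1$, the norm map $\mathrm{Cl}(L)\twoheadrightarrow\mathrm{Cl}(\Q(\sqrt{-D}))$ is surjective (the extension $L/\Q(\sqrt{-D})$ is ramified, so it shares no nontrivial unramified subextension), and genus theory gives $\#\mathrm{Cl}(\Q(\sqrt{-D}))\ge2^{k-1}$; hence $\#\mathrm{Pic}(\OK[i])$, and with it the number of determinant-$1$ form classes over $\OK$, tends to infinity with $k$. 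Choosing $k$ large and deleting the $\le2$ split classes then produces more than $n$ additively indecomposable binary $\OK$-forms of determinant $d=1$. (Concrete determinant-$1$ forms are easy to write down, for instance $\bigl(\begin{smallmatrix}a&\sqrt D\\ \sqrt D&c\end{smallmatrix}\bigr)$ with $ac=D+1$ and $a,c\ge2$; but establishing non-splitness and inequivalence for them still requires the correspondence above.)

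I expect the genuine work to be in the third paragraph: making the Gauss-type correspondence over $\OK$ precise — in particular verifying that enough ideal classes of $\OK[i]$ produce \emph{$\OK$-integral, totally positive definite} forms and that distinct classes give inequivalent forms up to the stated ambiguity. The delicate point is that the ideal used to normalise a form need not be a totally positive principal ideal of $\OK$ when the narrow class group of $K$ is nontrivial; I would handle this by restricting to the kernel of $\mathrm{Pic}(\OK[i])\to\mathrm{Cl}^{+}(\OK)$ and bounding its size from below via Kuroda's class number formula for the biquadratic field $L=\Q(\sqrt D,\sqrt{-1})$, which gives $\#\mathrm{Pic}(\OK[i])\gg h(\Q(\sqrt D))\,h(\Q(\sqrt{-D}))$, so the kernel still has order $\gg h(\Q(\sqrt{-D}))\to\infty$. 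By comparison the unimodular decomposability analysis of the first paragraph is routine.
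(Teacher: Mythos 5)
Your reduction in the first paragraph — that a \emph{classical} totally positive definite binary form of determinant $1$ is additively decomposable (into classical summands) if and only if it is equivalent to $\langle\eta,\eta^{-1}\rangle$ for a totally positive unit $\eta$ — is essentially sound, and it is a genuinely different starting point from the paper. But the proposal has a real gap exactly where you place "the genuine work": paragraph three is a programme, not a proof. The correspondence between $\mathrm{GL}_2(\OK)$-classes of totally positive definite determinant-$1$ forms and $\mathrm{Pic}(\OK[i])$ is never established, and over a base with nontrivial (narrow) class group and with the possibly non-maximal order $\OK[i]$ it is delicate: ideal classes of $\OK[i]$ whose underlying $\OK$-module is not free do not give forms on $\OK^2$ at all, distinct classes must be shown to give inequivalent forms, and integrality and total positive definiteness of the resulting forms must be checked. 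None of this is done, and without it you have no lower bound on the number of determinant-$1$ classes, hence no proof. Moreover, at least one of the assertions you do make is false: for $D=p_1\cdots p_k$ with all $p_i\equiv 1\pmod 4$ the field $L=\Q(\sqrt{D},i)$ is an \emph{unramified} quadratic extension of $\Q(\sqrt{-D})$ (it sits inside the genus field, since the prime discriminants dividing $-4D$ are $-4,p_1,\dots,p_k$), so the norm map $\mathrm{Cl}(L)\to\mathrm{Cl}(\Q(\sqrt{-D}))$ is not surjective — its image has index $2$. That particular error only costs a factor of $2$, but it is symptomatic of the fact that the central counting step is unverified. A further caveat: your determinant inequality argument ("$\Nm(\det Q_i)\ge 1$") uses integrality of $\det Q_i$ and therefore breaks for non-classical summands, where $\det Q_i\in\frac14\OK$ can have norm as small as $\frac1{16}$; so even the paragraph-one lemma is only proved for decompositions into classical forms.

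For comparison, the paper's proof is completely explicit and avoids all of this machinery. It works in the family $D=m^2+1$ ($m$ odd), where the non-unit indecomposable integers are $\alpha_i=mi+1+i\sqrt{m^2+1}$, takes the forms $Q_{i,k}=\alpha_i x^2+2kxy+\alpha_i'y^2$ (additively indecomposable by Proposition~\ref{prop:inde_from_inde}, pairwise inequivalent by the sum-of-squares criterion of Lemma~\ref{lem:sumofsquares_cond} as verified in Proposition~\ref{prop:m2+1}), computes $\det Q_{i,k}=\Nm_{K/\Q}(\alpha_i)-k^2$, observes $m^2+1-\det Q_{m-j,k}=j^2+k^2$, and then chooses $m$ and $d$ so that $m^2+1-d$ has at least $n$ representations as a sum of two nonzero squares via Jacobi's two-square theorem. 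Everything needed is elementary and the determinants are pinned down by hand; if you want to pursue your route, you would need to supply the full form–ideal-class dictionary over $\OK$, which is a substantial piece of work in its own right.
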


 Finally, we classify all additively indecomposable binary quadratic forms for the first few real quadratic fields in Section~\ref{sec:concrete}. We make a distinction between the classical quadratic forms, those studied by Gauss and corresponding to a Gram matrix with integer entries, and general integral forms. These differences play a substantial role in some computations. Specifically:

\begin{theorem}
    Let $\Q(\sqrt{D})$ be a real quadratic field. Up to equivalence, we have the following number of additively indecomposable binary quadratic forms over $\mathcal{O}_{ \Q(\sqrt{D})}$
\[
\begin{tabular}{|l|c|c|}
\hline
$D$ & \begin{tabular}[c]{@{}c@{}}The number of \textbf{classical}\\ quadratic forms over $\mathcal{O}_{\Q(\sqrt{D})}$\end{tabular} & \begin{tabular}[c]{@{}c@{}}The number of \textbf{non-classical}\\ quadratic forms over $\mathcal{O}_{\Q(\sqrt{D})}\footnote{Results for $\Q(\sqrt{6})$ and $\Q(\sqrt{21})$ are based on partial computations (for more details, see Subsection \ref{sec:concrete}).}  $\end{tabular} \\ \hline
$2$ & $1$                                                          & $7$                                                           \\ \hline
$3$ & $3$                                                          & $4$                                                           \\ \hline
$5$ & $1$                                                          & $2$                                                           \\ \hline
$6$ & $14$                                                          & $\geq 26$                                                          \\ \hline
$21$ & $11$                                                          & $\geq 8$                                                        \\ \hline
\end{tabular}
\]
\end{theorem}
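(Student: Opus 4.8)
The plan is to treat the classical and non-classical cases with the same overall strategy, differing only in the set of admissible off-diagonal entries, and to proceed field by field ($D=2,3,5,6,21$). First I would fix, for each $D$, an explicit description of $\OK=\Z[\omega_D]$ (with $\omega_D=\sqrt D$ or $\frac{1+\sqrt D}{2}$), of the unit group $\OK^\times$ (generated by $-1$ and a fundamental unit $\varepsilon_D$), and of the finite set $\mathcal I$ of indecomposable integers up to squares of units — this is available from \cite{DS,Pe} for real quadratic fields. A binary totally positive definite form is $Q(x,y)=ax^2+bxy+cy^2$ with $a,c\in\OK^+$, $b\in\OK$ (classical) or $b\in\frac12\OK$ with $b\not\in\OK$ (non-classical), and totally positive definiteness means $a^{(i)}>0$ and $(4ac-b^2)^{(i)}>0$ for both embeddings $i$; here $\det Q=\frac14(4ac-b^2)$ up to the usual normalisation. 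The key finiteness input is Theorem~\ref{thm:main}: additive indecomposability forces $\Nm_{K/\Q}(\det Q)<\gamma_{K,2}^2C^2$, so with the sharp value of $C$ for each concrete field (improved beyond $\Delta_K+1$ as promised in the text) and $\gamma_{K,2}$ the Hermite constant for binary forms, one gets an explicit absolute bound $N_D$ on $\Nm(\det Q)$.

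Next I would enumerate candidates. Up to the equivalence action (which includes $\mathrm{GL}_2(\OK)$ substitutions and scaling $Q$ by $\OK^+$ — here one must be careful about which notion of equivalence the paper fixes), we may reduce $a$ to lie in a fundamental domain: WLOG $a$ is a ``small'' totally positive integer, and in fact $a$ can be taken among representatives with bounded norm, while the Minkowski/Humbert reduction lets us bound $b$ relative to $a$ and then $c$ relative to $a$ via the determinant bound $\Nm(4ac-b^2)\le 4^2 N_D$. This leaves finitely many $(a,b,c)$, which I would list explicitly. For each surviving form $Q$ one must then decide additive (in)decomposability directly: $Q$ is additively decomposable iff $Q=H+\alpha L^2$ for some $\alpha\in\OK^+$, linear $L$ over $\OK$, and totally positive semi-definite $H$ over $\OK$; concretely $\alpha L^2=\alpha(px+qy)^2$ and one checks whether $Q-\alpha(px+qy)^2$ can be made totally positive semi-definite (equivalently has nonnegative diagonal and $4$(product$-$square$)\succeq 0$ in both embeddings) for some choice of $\alpha,p,q$ — a finite search since the diagonal entries of $\alpha L^2$ are bounded by those of $Q$. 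Forms that survive all such subtractions are the additively indecomposable ones; counting equivalence classes among them gives the table entries.

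The main obstacle I expect is twofold. First, the bookkeeping of \emph{equivalence}: over $\OK$ with nontrivial unit group, two superficially different triples $(a,b,c)$ can be equivalent via a unit scaling or a unimodular change of variables, and conversely reduction theory over real quadratic fields does not produce a clean unique normal form; pinning down a correct and complete set of representatives (and proving no two are equivalent, e.g. by comparing the pairs $(\Nm a,\Nm c)$, the sets of represented small integers, or automorphism groups) is the delicate combinatorial heart of the argument. Second, the indecomposability test itself is a search over $(\alpha,p,q)$ that is finite but can be large; for $D=6$ and $D=21$ the determinant bound $N_D$ is big enough that a complete hand enumeration is impractical, which is exactly why the table records only lower bounds ($\ge 26$, $\ge 8$) in the non-classical columns for those fields — there the claim is that the listed forms are pairwise inequivalent and additively indecomposable, verified computationally, without a full proof that the list is exhaustive. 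For $D=2,3,5$ the bounds are small enough that the enumeration can be carried out completely, yielding the exact counts $1,3,1$ (classical) and $7,4,2$ (non-classical). I would present the $D=2$ case in full detail as a model and then indicate the analogous computations for the remaining fields, deferring the heaviest case analyses to a computational appendix.
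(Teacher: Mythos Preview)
Your overall architecture---bound $\Nm_{K/\Q}(\det Q)$ via Theorem~\ref{thm:main}, enumerate candidates up to equivalence, test each for indecomposability, then sort into equivalence classes---matches the paper's. The enumeration itself is organised differently (the paper fixes $\det Q$ and a minimum-norm diagonal entry $\alpha$, then reduces the off-diagonal coefficient $\beta$ to a finite set via explicit congruences modulo $J\min(Q)$; you invoke Humbert/Minkowski reduction instead), but either route produces a finite list.

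There is, however, a genuine gap in your indecomposability test. You assert that $Q$ is additively decomposable \emph{iff} it can be written as $H+\alpha L^2$ with $L$ linear, and you propose to search only over subtractions of rank-one forms $\alpha(px+qy)^2$. This ``only if'' direction is false. Additive decomposability means $Q=Q_1+Q_2$ with both summands totally positive semi-definite over $\OK$; nothing forces either summand to have rank one. The paper's own computations exhibit forms whose \emph{only} decompositions are into two rank-two pieces---for instance $(6+2\sqrt{3})x^2+2(3+3\sqrt{3})xy+(6+2\sqrt{3})y^2$ over $\Z[\sqrt{3}]$ decomposes only as a sum of two forms each with unit determinant. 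Your test would wrongly declare such forms indecomposable and inflate the counts. The correct check (which the paper implements) is the brute-force one: for every $0\prec\overline{\alpha}\preceq\alpha$, $0\prec\overline{\eta}\preceq\eta$, and every $\overline{\beta}\in\OK$ with $\overline{\beta}^{\,2}\preceq 4\overline{\alpha}\,\overline{\eta}$, test whether $(\alpha-\overline{\alpha})x^2+(\beta-\overline{\beta})xy+(\eta-\overline{\eta})y^2$ is totally positive semi-definite.

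Two smaller corrections. First, equivalence here is only the $\mathrm{GL}_2(\OK)$ action $M_Q\mapsto N M_Q N^t$; there is no scaling of $Q$ by $\OK^+$. (Scaling by a non-square unit generally changes the equivalence class; Lemma~\ref{lem:equiv_triv} records exactly which unit scalings are absorbed by equivalence.) Second, ``non-classical'' does not mean $b\in\tfrac12\OK\setminus\OK$: the form $ax^2+bxy+cy^2$ always has $a,b,c\in\OK$, and non-classical simply means the Gram-matrix entry $b/2$ fails to lie in $\OK$, i.e.\ $b\notin 2\OK$.
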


We shall begin with preliminaries, which contain most of the definitions and notations. We then prove Theorem~\ref{thm:main} in Section~\ref{sec:3}, including the discussion of the bounds of the related constants. Section 4 covers the $n$-universality, which includes the proof of Theorem~\ref{thm:upper_bound} in Theorem~\ref{thm:2uni_upperbound2}. All the later sections focus on the special cases related to either restricting the ranks of the quadratic form (to two) or the degree of the totally real number fields (also to two).
The proof of Theorem~\ref{thm:exist} follows from Theorems~\ref{thm:exist_c} and~\ref{thm:exist_nc}, and, more generally, Section~\ref{sec:exists} examines the existence questions in more detail.
Theorem~\ref{thm:almost_all_intro} is proved as Theorem~\ref{thm:almost_all}, and Theorem~\ref{thm:fixed_det_intro} is proved as Theorem~\ref{thm:fixed_det} in Section~\ref{sec:lower}.

\section*{Acknowledgments}
We would like to thank V\'it\v ezslav Kala, Ester Sgallová and Jongheun Yoon for many fruitful discussions and suggestions. The authors are especially grateful to Robin Visser for his meticulous review of the draft manuscript.

The first author was supported by Czech Science Foundation GA\v{C}R, grant 22-11563O. The second author was supported by {Charles University} programmes PRIMUS/24/SCI/010 and UNCE/24/SCI/022.

\section{Preliminaries}\label{sec:prelim}
We adhered to the notation used in \cite{O00} and \cite{N99}, and any missing details may be found in either of those two books.

Throughout the paper, unless stated otherwise, $K$ will be a totally real number field of degree $d=[K:\Q]$ with the ring of integers $\OK$ and discriminant $\Delta_K$. We designate $\sigma_1,\ldots,\sigma_d$ as all the real embeddings of $K$. Hence, given $\alpha \in K$, its trace and norm are $\tr_{K/\Q}(\alpha)=\sum_{i=1}^d\sigma_i(\alpha)$ and $\Nm_{K/\Q}(\alpha)=\prod_{i=1}^d\sigma_i(\alpha)$, respectively. Units in $\OK$ are denoted by $\OK^{\times}$. Given $\alpha, \beta \in K$, we write $\alpha \succ \beta$ (or $\alpha \succeq \beta$) if and only if $\sigma_i(\alpha)>\sigma_i(\beta)$ (or $\sigma_i(\alpha)\ge\sigma_i(\beta)$) for all $i=1,\ldots, d$. An algebraic number $\alpha \in K$ is \textit{totally positive} if and only if $\alpha \succ 0$. For any set $A\subset K$, we denote by $A^+=\{a \in A\mid a \succ 0\}$, as was used in the introduction in the case of $K^+$, the set of all totally positive numbers in $K$. By a slight abuse of notation, we also write $\R^+$ (respectively, $\R^+_0$) for the set of positive (respectively, non-negative) real numbers and $\R^{d,+}$ for the positive quadrant of $\R^d$, the
$d$-dimensional real vector space.

A totally positive algebraic integer $\alpha \in \OK$ is said to be \textit{indecomposable} if and only if $\alpha=\alpha_1+\alpha_2$, where $\alpha_i\succeq 0$ are in $\OK$, implies that $\alpha_1\alpha_2=0.$ It is known that the norm of indecomposable integers is bounded, and, in particular, if $\Nm_{K/\Q}(\alpha)> \Delta_K$, then $\alpha$ is decomposable in $\OK$ \cite[Theorem 5]{KY}. For many number fields, this bound is not sharp.

By a quadratic form over $\OK$, we mean a \textit{totally positive semi-definite} $Q(\textbf{x})\in \OK[\textbf{x}]=\OK[x_1,\ldots,x_r]$, that is, $Q(\textbf{x})=\sum_{1\le i,j \le r} a_{ij}x_ix_j$ such that $Q(\textbf{x})\succeq 0$ for all $\textbf{x} \in K^r$. The form is \textit{totally positive definite} if and only if it is totally positive semi-definite and $Q(v)=0$ implies $v=\textbf{0}$, i.e. the zero vector. The \textit{rank} is equal to the number of variables of the totally positive form $Q$, and more generally (including totally positive semi-definite forms), a quadratic form with $n$ variables will be called \textit{$n$-ary} quadratic form. By a linear form $L \in \OK[\textbf{x}]$ we mean $L(\textbf{x})=\sum^n_{i=1}a_ix_i,$ $a_i\in \OK$, and we may call it an $\OK$-linear form. Within the text, we may drop ``quadratic'' or ``linear'' if the degree of the form is obvious from the context.

For a given quadratic form $Q$, we can associate the corresponding symmetric matrix, its \textit{Gram matrix} $M_Q=(m_{ij})$, such that $m_{ij}=\frac{a_{ij}+a_{ji}}{2}$. For totally positive semi-definite (or definite) $Q$, the corresponding Gram matrix is \textit{totally positive semi-definite} (or \textit{definite}), that is, $\sigma_i(M_Q)$ is positive semi-definite (or definite) for all $i=1,\ldots,d$, where we consider embeddings component-wise. We say that a quadratic form $Q$ is \textit{classical} if $M_Q\in \OK^{r\times r}$, and \textit{non-classical}, or just \textit{integral}, otherwise. By the \textit{determinant} of a quadratic form $Q$, denoted by $\det(Q)$, we will mean the determinant of its Gram matrix, that is, $\det(Q)=\det(M_Q)$.

As mentioned in the introduction, we are interested in the additively indecomposable quadratic forms, which are defined as follows:
\begin{defn}
Let $Q$ be an $n$-ary classical (non-classical) totally positive definite quadratic form. We say that $Q$ is an \textit{additively indecomposable} quadratic form if for any decomposition $Q=Q_1+Q_2$ where $Q_i$ are $n$-ary classical (non-classical) quadratic forms, at least one $Q_i$ is not totally positive semi-definite.
\end{defn}
We need to distinguish between decomposition into classical and non-classical quadratic forms above. For example, a quadratic form $2x^2+2xy+(3+\sqrt{5})y^2$ is additively indecomposable with classical forms over $\Q(\sqrt{5})$, but as a non-classical form, we can write it as a sum of two copies of $x^2+xy+\frac{3+\sqrt{5}}{2}y^2$. On the other hand, $(2+\sqrt{2})x^2+2xy+(2-\sqrt{2})y^2$ is additively indecomposable as both a classical and non-classical quadratic form over $\Z[\sqrt{2}]$ (see Lemma~\ref{lem:binary}). We shall drop ``classical" or ``non-classical" in the discussion of additively decomposable quadratic forms whenever it is obvious from the context which type of forms we are working with.

We say that $Q$ \textit{represents} $\alpha \in \OK$ if there exists $v\in \OK^n$ such that $Q(v)=\alpha$. If $Q$ represents all the integers in $\OK^+$, we say that $Q$ is \textit{universal}. More generally, we say that a totally positive definite quadratic form $Q$ of rank $r$ represents an $s$-ary form $H$ if there exists an integer matrix $N \in\OK^{r\times s}$ such that $M_H=NM_QN^t$, where $N^t$ denotes the transpose of matrix $N$. We say that a quadratic form $Q$ is \textit{$n$-universal} if it represents all totally positive definite forms of rank $n$. Clearly, the case of $n=1$ corresponds to a more conventional definition of a universal form.

Two $r$-ary integer quadratic forms $Q$ and $H$ are \textit{equivalent} if $Q$ represents $H$, and $H$ represents $Q$, i.e. there exists an integral invertible matrix $N\in \OK^{r\times r}$ such that $M_H=N M_Q N^t$, where $\det(N)$ is a unit in $\OK$ (and hence $M_Q=N^{-1} M_H (N^{-1})^t)$.
If $Q$ represents the quadratic form $H$, then it also represents every quadratic form equivalent to $H$.

Let $Q$ be a totally positive definite quadratic form of rank $r$ over a totally real number field $K$. Then the \textit{minimum} of $Q$ is $\min(Q)=\min\{\Nm_{K/\Q}(Q(x))\mid x \in \OK^r\setminus\{\textbf{0}\}\}$, where $\textbf{0}$ is the zero vector.

\begin{defn}\label{def:herm}

The \textit{generalised Hermite constant} over $K$ is \[\gamma_{K,r}=\max_{Q} \dfrac{\min(Q)}{\Nm_{K/\Q}(\det(Q))^{1/r}},\]
where the maximum is run over all totally positive definite quadratic forms $Q$ of rank~$r$.
\end{defn}

This is a variant of the generalised Hermite constant, which works better in our setting (for further details, see \cite{BCIO, PW}). Similarly to the rational case, the precise values of $\gamma_{K,r}$ are known only for a few fields and ranks, although there exist upper bounds (see, e.g. \cite[Theorem~1]{Ic}) which we discuss in Section~3.

\section{Determinant bounds for additively decomposable quadratic forms}\label{sec:3}
We begin with the theorem of Baeza and Icaza \cite{BI} that was mentioned in the Introduction. 

\begin{theorem}\cite[Theorem~4.1]{BI}\label{thm:BI}
    Let $Q$ be a totally positive definite $n$-ary quadratic form over the ring of integers of a totally real number field $K$. If $\Nm_{K/\Q}(\beta)\geq C_{BI}$, where $C_{BI}$ is a constant that depends only on $K$ and $n$, then $Q$ is additively decomposable. Specifically, there exist an $n$-ary $\OK$-linear form $L$ and totally positive semi-definite quadratic form $H$ over $\OK$ such that $Q=H+L^2.$ 
\end{theorem}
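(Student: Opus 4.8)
The plan is to obtain the decomposition from one short-vector estimate for the \emph{dual} of $Q$: this gives Theorem~\ref{thm:main} directly, and the cruder ``$\alpha=1$'' shape of Theorem~\ref{thm:BI} then follows with one further arithmetic move. The picture is that, to peel a rank-one summand $\alpha L^{2}$ off $Q$ and leave the remainder totally positive semi-definite, the linear form $L$ must point along a direction in which $Q$ is ``wide'' --- a short vector $\lambda$ of the inverse Gram matrix --- and a large determinant makes every such direction so wide that an integral multiplier $\alpha$ fits underneath.

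Write $M=M_Q$ and let $Q^{*}$ be the form with Gram matrix $M^{-1}$, which is totally positive definite of determinant $\det(M)^{-1}$. Applying the defining inequality of $\gamma_{K,n}$ to $Q^{*}$ --- legitimate because some positive rational-integer multiple $mQ^{*}$ is an $\OK$-form and the quotient $\min(Q')\,\Nm_{K/\Q}(\det Q')^{-1/n}$ is unchanged under such rescaling, or equivalently one argues with $\mathrm{adj}(M)$ --- yields a nonzero $\lambda=(\lambda_1,\dots,\lambda_n)\in\OK^{n}$ with
\[
\Nm_{K/\Q}\!\big(\lambda^{t}M^{-1}\lambda\big)\ \le\ \gamma_{K,n}\,\Nm_{K/\Q}(\det M)^{-1/n}.
\]
Set $\beta:=(\lambda^{t}M^{-1}\lambda)^{-1}$. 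Each $\sigma_j(M)^{-1}$ is positive definite and $\lambda\neq0$, so $\beta\in K^{+}$, and the hypothesis $\Nm_{K/\Q}(\det Q)\ge\gamma_{K,n}^{n}C^{n}$ turns the displayed bound into $\Nm_{K/\Q}(\beta)\ge C$. By the defining property of $C$ there is $\alpha\in\OK^{+}$ with $\beta\succeq\alpha$, that is, $\sigma_j(\alpha)\le\sigma_j(\beta)$ for every $j$.

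Now put $L=\lambda_1 x_1+\dots+\lambda_n x_n$ and $H:=Q-\alpha L^{2}$; this $H$ has coefficients in $\OK$ (and is classical whenever $Q$ is, as $M_{\alpha L^{2}}=\alpha\lambda\lambda^{t}$). That $H$ is totally positive semi-definite is a one-embedding Cauchy--Schwarz check: for $v\in\R^{n}$,
\[
(\sigma_j(\lambda)^{t}v)^{2}\ \le\ \big(\sigma_j(\lambda)^{t}\sigma_j(M)^{-1}\sigma_j(\lambda)\big)\big(v^{t}\sigma_j(M)v\big)\ =\ \sigma_j(\beta)^{-1}\,v^{t}\sigma_j(M)v,
\]
hence $v^{t}\sigma_j(M)v-\sigma_j(\alpha)(\sigma_j(\lambda)^{t}v)^{2}\ge\big(1-\sigma_j(\alpha)/\sigma_j(\beta)\big)v^{t}\sigma_j(M)v\ge0$. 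Since $\alpha L^{2}$ is totally positive semi-definite too and neither summand vanishes (for $n\ge2$, rank forces $H\neq0$), $Q=H+\alpha L^{2}$ is the asserted decomposition. For the sharper $\alpha=1$ statement of Theorem~\ref{thm:BI} one instead replaces $\lambda$ by $u\lambda$ for a suitable unit $u\in\OK^{\times}$ --- which rescales $\beta$ by $u^{-2}$ but fixes $\Nm_{K/\Q}(\beta)$ --- and invokes the Dirichlet unit theorem to arrange $\beta\succeq1$ in every embedding, which is possible once $\Nm_{K/\Q}(\beta)$ exceeds a threshold set by the covering radius of the logarithmic unit lattice.

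The step carrying the real weight is this last normalisation, and it is the source of the inexplicit dependence of the Baeza--Icaza constant $C_{BI}$ on $K$; the whole point of Theorem~\ref{thm:main} is to drop it, accepting a genuinely non-unit $\alpha$ so that the only arithmetic input becomes the transparent ``large norm forces domination of an algebraic integer'' condition defining $C$, giving the explicit constant $\gamma_{K,n}^{n}C^{n}$. The one routine nuisance I would handle with care is that $Q^{*}$ is typically non-classical even when $Q$ is classical, which is why the Hermite input should be justified by scale-invariance, or else run through $\mathrm{adj}(M)$ with its ultimately cancelling powers of $\det M$.
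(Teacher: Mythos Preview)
The paper does not prove Theorem~\ref{thm:BI}: it is quoted from Baeza--Icaza \cite{BI} and serves as motivation for Theorem~\ref{thm:main1}, which the paper \emph{does} prove. Your proposal is correct and in fact establishes both results: the dual-form/Cauchy--Schwarz argument yields Theorem~\ref{thm:main1} directly, and the final unit-normalisation via Dirichlet's theorem recovers the $\alpha=1$ statement of Theorem~\ref{thm:BI}.

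Comparing your route to Theorem~\ref{thm:main1} with the paper's own: the paper works with the adjoint form $Q_{\mathrm{adj}}$ rather than the inverse $Q^{*}$ (these differ by the scalar $\det M$, so the Hermite input is the same after cancellation), and argues via Sylvester's criterion after a change of basis that places the short vector of $Q_{\mathrm{adj}}$ at a coordinate direction, in place of your direct Cauchy--Schwarz estimate. Your presentation buys a small technical advantage: you never need to extend the short vector $\lambda$ to an $\OK$-basis of $\OK^{n}$, a step the paper's ``changing the basis of $Q$'' implicitly invokes and which can be delicate when $\OK$ is not a PID. The unit-normalisation you sketch for Theorem~\ref{thm:BI} itself --- bringing $\beta$ into a fundamental domain for the squared-unit lattice under the logarithmic embedding --- is the standard Humbert-type move underlying the original Baeza--Icaza argument and is, as you observe, precisely the source of the inexplicit constant $C_{BI}$.
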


We expand on the above result by allowing for a more general decomposition, that is, we allow for scaling of the square of the linear form by an integer.

\begin{theorem}[=Theorem~\ref{thm:main}] \label{thm:main1}
Let $Q$ be a totally positive definite $n$-ary quadratic form over a totally real number field $K$, and let $C\in\R^+$ be such that if $\beta\in K^{+}$ with $\Nm_{K/\Q}(\beta)\geq C$, then there exists $\alpha \in \OK^+$ such that $\beta\succeq \alpha$. If $\Nm_{K/\Q}(\det(Q))\geq\gamma_{K,n}^n C^n$, then $Q$ is additively decomposable. Specifically, there exist $\alpha \in \OK^+$, $n$-ary $\OK$-linear form $L$ and totally positive semi-definite quadratic form $H$ over $\OK$ such that $Q=H+\alpha L^2.$ 
\end{theorem}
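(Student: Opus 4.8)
The plan is to construct the decomposition directly, choosing the linear form $L=c_1x_1+\cdots+c_nx_n$ and the multiplier $\alpha$ so that $H:=Q-\alpha L^2$ automatically has totally positive semi-definite Gram matrix. Writing $c=(c_1,\dots,c_n)^t$, the Gram matrix of $\alpha L^2$ is $\alpha\,cc^t$, so we need $\sigma_j(M_Q)-\sigma_j(\alpha)\,\sigma_j(c)\sigma_j(c)^t$ to be positive semi-definite for every embedding $\sigma_j$. I would invoke the standard rank-one ``downdate'' fact: for a real positive definite symmetric matrix $A$ and a nonzero vector $w$, one has $A-tww^t\succeq 0$ if and only if $t\le (w^tA^{-1}w)^{-1}$. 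Applying this in each embedding (valid since $Q$ totally positive definite forces $\sigma_j(M_Q)$ positive definite, and $c\ne 0$ forces $\sigma_j(c)\ne 0$), the condition on $\alpha$ becomes $0\prec\alpha\preceq\beta$, where $\beta:=1/(c^tM_Q^{-1}c)\in K^{+}$ is well-defined and totally positive because $M_Q^{-1}$ is totally positive definite.

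Thus it suffices to find $c\in\OK^n\setminus\{\mathbf{0}\}$ with $\Nm_{K/\Q}(\beta)=\Nm_{K/\Q}(c^tM_Q^{-1}c)^{-1}\ge C$: the defining property of $C$ then produces $\alpha\in\OK^{+}$ with $\beta\succeq\alpha$, and we finish by setting $L=\sum c_ix_i$ (an $\OK$-linear form) and $H=Q-\alpha L^2$, which lies in $\OK[\mathbf{x}]$ since $Q$ and $\alpha L^2$ do, and is totally positive semi-definite by the downdate computation; then $Q=H+\alpha L^2$. To produce $c$, I would apply the generalised Hermite constant to an integral rescaling of the adjugate form: since $2M_Q$ has entries in $\OK$, the symmetric matrix $G:=2^{n-1}\operatorname{adj}(M_Q)=\operatorname{adj}(2M_Q)$ lies in $\OK^{n\times n}$ and is totally positive definite (because $\operatorname{adj}$ commutes with each $\sigma_j$ and preserves positive definiteness), hence is the Gram matrix of a totally positive definite form $\widetilde Q$ over $\OK$ of rank $n$. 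By Definition~\ref{def:herm}, $\min(\widetilde Q)\le\gamma_{K,n}\,\Nm_{K/\Q}(\det\widetilde Q)^{1/n}$; choosing $c$ attaining $\min(\widetilde Q)$ and unwinding $c^tGc=2^{n-1}\det(M_Q)\,(c^tM_Q^{-1}c)$ together with $\det G=2^{n(n-1)}\det(M_Q)^{n-1}$, all powers of $2$ cancel and one obtains $\Nm_{K/\Q}(c^tM_Q^{-1}c)\le\gamma_{K,n}\,\Nm_{K/\Q}(\det Q)^{-1/n}$, which is $\le 1/C$ precisely when $\Nm_{K/\Q}(\det Q)\ge\gamma_{K,n}^nC^n$ — the hypothesis.

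The routine parts are the linear-algebra downdate identity and the determinant/norm bookkeeping through the adjugate (including the harmless rescaling by $2^{n-1}$, which is needed only to guarantee an $\OK$-form in the non-classical case and disappears from the final bound). The point that requires care is exactly what separates this from a purely real statement: the downdate must hold \emph{simultaneously} in all $d$ embeddings, and it is the defining property of $C$ that converts ``in every embedding there is an admissible real multiplier $\le\sigma_j(\beta)$'' into ``there is a single totally positive algebraic integer $\alpha$ with $\alpha\preceq\beta$ everywhere.'' I expect this passage — from the real/archimedean existence to an integral $\alpha$ — to be the conceptual heart of the argument, with everything else being verification.
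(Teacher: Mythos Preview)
Your argument is correct and is essentially the paper's proof. Both proceed by applying the generalised Hermite constant to the adjoint (adjugate) form of $Q$, obtaining a vector $c$ at which $\Nm_{K/\Q}(c^t M_Q^{-1} c)\le \gamma_{K,n}\Nm_{K/\Q}(\det Q)^{-1/n}$, and then invoking the defining property of $C$ to produce $\alpha\in\OK^+$ with $\alpha\preceq \beta=1/(c^tM_Q^{-1}c)$; the decomposition $Q=(Q-\alpha L^2)+\alpha L^2$ with $L=\sum c_i x_i$ follows.

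The only cosmetic differences are: (i) the paper first changes basis so that the minimum of $Q_{\mathrm{adj}}$ is attained at a coordinate vector $e_j$, hence $L=x_j$ and $\beta=\det(Q)/A_{jj}$, whereas you keep a general $c$ --- your formulation is arguably cleaner since it avoids the (tacit) question of whether a minimising vector can be completed to an $\OK$-basis; (ii) the paper phrases the positive semi-definiteness of $Q-\alpha x_j^2$ via Sylvester's criterion, while you phrase it via the rank-one downdate identity $A-tww^t\succeq 0\Leftrightarrow t\le (w^tA^{-1}w)^{-1}$ --- these are the same fact; (iii) your $2^{n-1}$ rescaling to guarantee $\OK$-integrality of the adjugate in the non-classical case is a nice touch the paper leaves implicit (it simply applies $\gamma_{K,n}$ to $Q_{\mathrm{adj}}$, relying on scale-invariance of the Hermite ratio). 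None of these change the substance.
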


The above proof refines the argument in Theorem~\ref{thm:BI} using indecomposable integers, which has a partial advantage as we shall display in later sections.

\begin{remark}
The maximum of the norms of indecomposable integers in $K$
may not be large enough to satisfy the constant $C$. For example, let $K=\Q(\sqrt{21})$, then only totally positive units are indecomposable in this field. However, the number $\frac{5}{14+3\sqrt{21}}$ has norm $\frac{25}{7}>3$, and, at the same time, it can be shown (e.g. computationally) that there is no $\alpha\in\OK^+$ such that $\frac{5}{14+3\sqrt{21}}\succeq \alpha$. 
\end{remark}

\begin{proof}[Proof of Theorem~\ref{thm:main1}]
Assume that $Q(x_1,\ldots,x_n)$ is an $n$-ary totally positive definite quadratic form with the Gram matrix $M_Q=(a_{ij})$.
Hence, $M_Q$ is totally positive definite, that is,
$\sigma_i(M_Q)$ is a positive definite matrix for each embedding $\sigma_i$ of $K$, and therefore $\det(Q)=\det(M_Q)\succ 0$. If there exists $\alpha\in\OK^{+}$ such that $Q-\alpha x^2_j$ is totally positive semi-definite, then $Q$ would be additively decomposable as $\alpha x^2_j+(Q-\alpha x^2_j)$, as required. 

Using the cofactor expansion formula, we have $\det(Q)=\sum_{i=1}^n (-1)^{i+j}a_{ij}A_{ij}$, where $A_{ij}$ are the $(i,j)$-minors of $M_Q$. It follows that \[(a_{jj}-\alpha)A_{jj}+\sum_{\substack{i=1\\i\not=j}}^n (-1)^{i+j}a_{ij}A_{ij}=\det(Q)-\alpha A_{jj}.\]  
By Sylvester's criterion \cite[Theorem 7.2.5]{HJ}, $(Q-\alpha x^2_j)$ is totally positive semi-definite if $\frac{\det(Q)}{A_{jj}}\succeq \alpha$, and such $\alpha$ exists if the norm of $\frac{\det(Q)}{A_{jj}}$ is large enough (larger than $C$, by our hypothesis). If necessary, changing the basis of $Q$, we assume that $\Nm_{K/\Q}(A_{jj})=\min(Q_{\mathrm{adj}})$, where $Q_{\mathrm{adj}}$ is the adjoint quadratic form of $Q$. We have
\[
\frac{\Nm_{K/\Q}(\det(Q))}{\Nm_{K/\Q}(A_{jj})}= \frac{\Nm_{K/\Q}(\det(Q))}{\min(Q_{\mathrm{adj}})}.
\]
The form $Q_{\mathrm{adj}}$ satisfies \[\gamma_{K,n}\Nm_{K/\Q} (\det(Q_{\mathrm{adj}}))^{1/n} \ge \min(Q_{\mathrm{adj}})\] as it is totally positive definite (see Definition~\ref{def:herm}). Also,  $\det(Q_{\mathrm{adj}})=\det(Q)^{n-1}$. Therefore, we get
\[
\frac{\Nm_{K/\Q}(\det(Q))}{\min(Q_{\mathrm{adj}})}\geq \frac{\Nm_{K/\Q}(\det(Q))}{\gamma_{K,n}\Nm_{K/\Q}(\det(Q))^{\frac{n-1}{n}}}=\frac{\Nm_{K/\Q}(\det(Q))^{1/n}}{\gamma_{K,n}}.
\]
If the right side is at least $C$, then we can find $\alpha\in\OK^{+}$ that satisfies the above inequality. That gives the condition $\Nm_{K/\Q}(\det(Q))\geq\gamma_{K,n}^n C^n$. 
\end{proof}

By \cite[Theorem~5]{KY}, we can take $C=\Delta_K+1$.
For the Hermite constant, we have $\gamma_{K,2}\leq \frac{1}{2}\Delta_K$ for $n=2$ \cite{Co}.  
More generally, the following is known:

\begin{theorem}[{\cite[Theorem~1]{Ic}}]
Let $K$ be a totally real number field of degree $d$. Then
\[
\gamma_{K,n}\leq 4^d \omega_n^{-2d/n}\Delta_K
\]
where $\omega_n$ denotes the volume of the unit sphere in dimension $n$. 
\end{theorem}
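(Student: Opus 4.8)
The plan is to prove the bound directly from Definition~\ref{def:herm} by the geometry of numbers, recovering the classical Minkowski bound in the rational case. Fix a totally positive definite $n$-ary quadratic form $Q$ over $\OK$ with Gram matrix $M_Q$, and for each embedding $\sigma_i$ let $Q^{(i)}$ be the positive definite real quadratic form on $\R^n$ with Gram matrix $\sigma_i(M_Q)$. Embed $\OK^n$ in $\R^{nd}$ by sending $x=(x_1,\dots,x_n)$ to the vector whose $i$-th block is $(\sigma_i(x_1),\dots,\sigma_i(x_n))\in\R^n$; the image is a lattice $\Lambda$, namely the $n$-fold orthogonal sum of the Minkowski lattice of $\OK$. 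As $K$ is totally real, the latter has covolume $\sqrt{\Delta_K}$, so $\Lambda$ has covolume $\Delta_K^{n/2}$.

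The key choice is the convex body. For $t\in\R^+$ set
\[
\mathcal{E}_t=\bigl\{\,y=(y^{(1)},\dots,y^{(d)})\in(\R^n)^d : Q^{(i)}(y^{(i)})\le t\text{ for all }i\,\bigr\},
\]
which is compact, symmetric about the origin, and convex, being the product of the $d$ solid ellipsoids $\{Q^{(i)}\le t\}\subset\R^n$. Since $\det Q^{(i)}=\sigma_i(\det Q)$, each of these ellipsoids has volume $\omega_n t^{n/2}\sigma_i(\det Q)^{-1/2}$, hence
\[
\mathrm{vol}(\mathcal{E}_t)=\prod_{i=1}^d\omega_n t^{n/2}\sigma_i(\det Q)^{-1/2}=\omega_n^{\,d}\,t^{nd/2}\,\Nm_{K/\Q}(\det Q)^{-1/2}.
\]

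Now I would invoke Minkowski's convex body theorem in its compact form: choosing $t$ so that $\mathrm{vol}(\mathcal{E}_t)=2^{nd}\,\mathrm{covol}(\Lambda)=2^{nd}\Delta_K^{n/2}$ forces the existence of a nonzero $x\in\OK^n$ with $Q^{(i)}(\sigma_i(x))\le t$ for every $i$. Solving for $t$ gives $t=4\,\omega_n^{-2/n}\,\Delta_K^{1/d}\,\Nm_{K/\Q}(\det Q)^{1/(nd)}$, and since $\Nm_{K/\Q}(Q(x))=\prod_{i=1}^d Q^{(i)}(\sigma_i(x))\le t^{\,d}$ we obtain
\[
\min(Q)\le t^{\,d}=4^d\,\omega_n^{-2d/n}\,\Delta_K\,\Nm_{K/\Q}(\det Q)^{1/n}.
\]
Dividing by $\Nm_{K/\Q}(\det Q)^{1/n}$ and taking the maximum over all such $Q$ gives $\gamma_{K,n}\le 4^d\omega_n^{-2d/n}\Delta_K$, as claimed.

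There is no conceptual obstacle; the care is all in the bookkeeping. The two facts to pin down are that the Minkowski image of $\OK^n$ has covolume exactly $\Delta_K^{n/2}$ (here one uses that all $d$ embeddings of the totally real field are real, so no factors of $2$ enter from complex places) and that $\prod_i\det Q^{(i)}=\Nm_{K/\Q}(\det Q)$, which is immediate since embeddings commute with the determinant. One must also use the closed/compact version of Minkowski's theorem so that the non-strict inequality on volumes suffices. Finally, the precise constant $4^d\omega_n^{-2d/n}$ arises because we use the product of $d$ copies of the $n$-dimensional ellipsoid; the cruder route of applying Minkowski once to the single $nd$-dimensional form $\sum_i Q^{(i)}$ (together with the AM--GM inequality to pass from the sum back to the product $\Nm_{K/\Q}(Q(x))$) yields instead a constant featuring $\omega_{nd}$, and the product formulation above is the one recorded in \cite{Ic}.
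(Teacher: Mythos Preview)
The paper does not actually prove this theorem; it is quoted verbatim as \cite[Theorem~1]{Ic} and used only as input to the corollary that follows. Your argument is the standard geometry-of-numbers proof (Minkowski's convex body theorem applied to the product of the $d$ ellipsoids $\{Q^{(i)}\le t\}$ over the Minkowski lattice of $\OK^n$), and the bookkeeping you record---covolume $\Delta_K^{n/2}$, ellipsoid volume $\omega_n t^{n/2}\sigma_i(\det Q)^{-1/2}$, and the solved value $t=4\,\omega_n^{-2/n}\Delta_K^{1/d}\Nm_{K/\Q}(\det Q)^{1/(nd)}$---is correct and yields exactly the stated constant. This is the same approach as in Icaza's original paper, so there is nothing to contrast.
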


Then we can make the following corollary to Theorem~\ref{thm:main}.

\begin{cor}
Let $Q$ be an $n$-ary quadratic form over the ring of integers of $K$. The quadratic form $Q$ is additively decomposable if $\Nm_{K/\Q}(\det(Q))\geq 4^{dn}\omega_n^{-2d}\Delta_K^{n}(\Delta_K+1)^n$. Moreover, if $n=2$, then $Q$ is additively decomposable if $\Nm_{K/\Q}(\det(Q))\geq \frac{1}{4}\Delta_K^{2}(\Delta_K+1)^2$. 
\end{cor}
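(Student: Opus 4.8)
The plan is to obtain this corollary as an immediate specialization of Theorem~\ref{thm:main}, feeding in the explicit upper bounds just recorded for the two constants that appear there. First I would fix the choice $C=\Delta_K+1$. This is legitimate by \cite[Theorem~5]{KY}: that theorem bounds the norm of indecomposable integers of $K$ by $\Delta_K$, which is exactly what is needed to guarantee that every $\beta\in K^+$ with $\Nm_{K/\Q}(\beta)\ge\Delta_K+1$ dominates, in the ordering $\succeq$, some $\alpha\in\OK^+$; hence $C=\Delta_K+1$ satisfies the hypothesis on $C$ in Theorem~\ref{thm:main}.

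Next, for general $n$, I would substitute the bound $\gamma_{K,n}\le 4^d\omega_n^{-2d/n}\Delta_K$ of \cite[Theorem~1]{Ic}. Raising to the $n$-th power and multiplying by $C^n$ gives
\[
\gamma_{K,n}^n C^n \le \left(4^d\omega_n^{-2d/n}\Delta_K\right)^n(\Delta_K+1)^n = 4^{dn}\,\omega_n^{-2d}\,\Delta_K^{n}\,(\Delta_K+1)^n .
\]
Therefore any $Q$ with $\Nm_{K/\Q}(\det(Q))\ge 4^{dn}\omega_n^{-2d}\Delta_K^{n}(\Delta_K+1)^n$ also satisfies $\Nm_{K/\Q}(\det(Q))\ge\gamma_{K,n}^n C^n$, and so $Q$ is additively decomposable by Theorem~\ref{thm:main}. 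For the sharper statement in the case $n=2$, I would instead invoke the field-specific bound $\gamma_{K,2}\le\tfrac12\Delta_K$ from \cite{Co}; squaring and multiplying by $C^2=(\Delta_K+1)^2$ yields $\gamma_{K,2}^2 C^2\le \tfrac14\Delta_K^{2}(\Delta_K+1)^2$, and the same reasoning as above gives additive decomposability whenever $\Nm_{K/\Q}(\det(Q))\ge\tfrac14\Delta_K^{2}(\Delta_K+1)^2$.

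There is no real obstacle in this argument: it is a one-step substitution once Theorem~\ref{thm:main} and the cited bounds on $\gamma_{K,n}$ and $C$ are available. The only point needing a little care is bookkeeping with exponents — checking that $\bigl(4^d\omega_n^{-2d/n}\bigr)^n=4^{dn}\omega_n^{-2d}$ — and using the monotonicity of the determinant threshold in the correct direction, namely that replacing the threshold $\gamma_{K,n}^nC^n$ by the possibly larger quantity $4^{dn}\omega_n^{-2d}\Delta_K^{n}(\Delta_K+1)^n$ still yields a sufficient condition.
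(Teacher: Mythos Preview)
Your proposal is correct and follows exactly the paper's approach: the corollary is stated immediately after recording the bounds $C=\Delta_K+1$ from \cite[Theorem~5]{KY}, $\gamma_{K,n}\le 4^d\omega_n^{-2d/n}\Delta_K$ from \cite[Theorem~1]{Ic}, and $\gamma_{K,2}\le\tfrac12\Delta_K$ from \cite{Co}, and the paper regards the substitution into Theorem~\ref{thm:main} as immediate without giving a separate proof.
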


\section{$n$-Universal quadratic forms}
\label{sec:universal}

In what follows, we apply the results from previous sections to give new bounds on the ranks of $n$-universal forms. First, we introduce an additional notation. We denote by $\mathcal{Q}(h,n)$ the set of all $n$-ary integer quadratic forms with determinant of norm at most $h$, up to equivalence. 
Similarly, let $\mathcal{Q}^{\mathcal{C}}(h,n)$ be the set of such classical forms. 
Moreover, let $\mathcal{I}$ be the set of all indecomposables in $\OK$ up to multiplication by squares of units, and $g_{\OK}(n)$ denotes the \textit{g-invariant} of $\OK$, which we mentioned in the introduction, i.e. the smallest natural number such that the sum of squares of integral linear forms in $K$ in $n$ variables can be represented as such a sum with squares of $g_{\OK}(n)$ linear forms.
Using Theorem~\ref{thm:main1}, we can draw the following conclusion.

\begin{theorem} \label{thm:uni_upperbound}
Let $K$ be a totally real number field. There exists a classical (non-classical) $n$-universal totally positive semi-definite quadratic form of rank at most
\[
\min\left(g_{\OK}(n)+n\cdot\#\mathcal{Q}^{\ast}(C_{BI},n),~g_{\OK}(n)\cdot\#\mathcal{I}+n\cdot\#\mathcal{Q}^{\ast}(\gamma_{K,n}^nC^n,n)\right)
\]
where $C_{BI}$ is the constant in Theorem~\ref{thm:BI}, $C$ is as in Theorem~\ref{thm:main}, and $\mathcal{Q}^{\ast}(\cdot,\cdot)$ is $\mathcal{Q}^{\mathcal{C}}(\cdot,\cdot)$ or $\mathcal{Q}(\cdot,\cdot)$ depending on whether $Q$ is classical or not.
\end{theorem}

\begin{proof}
Let $Q$ be an $n$-ary totally positive definite quadratic form over $\OK$. Without loss of generality, we can assume that $Q$ is classical, since the proofs for these two are almost identical. By Theorem~\ref{thm:main1} and Theorem~\ref{thm:BI}, if the norm of the determinant of $Q$ is large, then $Q$ is additively decomposable. Depending on the choice of the above theorems, one can write $Q$ as
\[
Q(x_1,\ldots,x_n)=\sum_{i=1}^{n_1} \alpha_iL_i(x_1,\ldots,x_n)^2+H(x_1,\ldots,x_n),
\]
or
\[
Q(x_1,\ldots,x_n)=\sum_{i=1}^{n_2}\overline{L}_i(x_1,\ldots,x_n)^2+\overline{H}(x_1,\ldots,x_n),
\]
where $n_i\in \N$, $L_i,\overline{L_i}$ are linear forms with coefficients in $\OK$, $\alpha_i$ are chosen to be indecomposable integers and $H$ and $\overline{H}$ are totally positive definite quadratic forms satisfying $N_{K/\Q}(\det{H})< \gamma_{K,n}^nC^n$ (as in Theorem~\ref{thm:main1}) or $N_{K/\Q}(\det(\overline{H}))\leq C_{BI}$, respectively. Therefore, it is represented by the quadratic forms 
\[
\sum_{\alpha\in \mathcal{I}}\alpha(y_{1,\alpha}^2+\cdots+y_{g_{\OK}(n),\alpha}^2)+\sum_{H\in \mathcal{Q}^{\mathcal{C}}(\gamma_{K,n}^nC^n,n)}H(z_{1,H},\ldots,z_{n,H})
\]
and
\[
\sum_{i=1}^{g_{\OK}(n)}y_i^2+\sum_{H\in \mathcal{Q}^{\mathcal{C}}(C_{BI},n)}H(z_{1,H},\ldots,z_{n,H}).
\]
This completes the proof. 
\end{proof}

The above theorem should be compared with the bound obtained from the class number of sums of squares forms over $\OK$ in $n+3$ variables (see, e.g. \cite{CI21,Ic2}). From it the minimal rank of an $n$-universal quadratic form is at most $(n+3)\times \{$sum of the class numbers of distinct genera of unimodular positive lattices that have full rank in the space of $n+3$ squares over $K\}$. Furthermore, the $g$-invariant can be bounded from above by a constant exponential in $n[K:\Q]$ that is independent of the choice of $K$ \cite[Theorem~1.1]{KraY}.

Using the information about additively indecomposable quadratic forms, we find an upper bound on the number of variables of $n$-universal quadratic forms. For that, we extend the notation of the set of indecomposable integers $\mathcal{I}$ to $\mathcal{F}_n$ to denote the set of all integer additively indecomposable $n$-ary totally positive definite quadratic forms in $K$, up to equivalence, and let $\mathcal{F}^{\mathcal{C}}_n$ be the set of classical, additively indecomposable forms. Clearly, $\mathcal{I}$ is isomorphic to $\mathcal{F}_1$ and $\mathcal{F}_1=\mathcal{F}^{\mathcal{C}}_1$.

\begin{theorem} \label{thm:2uni_upperbound2}
Let $K$ be a totally real number field. There exists a classical (non-classical) $n$-universal totally positive semi-definite quadratic form of rank at most
\[
g_{\OK}(n)\cdot\#\mathcal{I}+n\cdot\sum_{H\in \mathcal{F}^{\ast}_n}\left\lfloor\frac{\gamma_{K,n}^nC^n}{N_{K/\Q}(\det(H))}\right\rfloor
\]
where $C$ is as in Theorem~\ref{thm:main1}, and $\mathcal{F}^{\ast}_n$ is $\mathcal{F}^{\mathcal{C}}_n$ or $\mathcal{F}_n$, depending on whether $Q$ is classical or not.    
\end{theorem}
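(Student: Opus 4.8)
The plan is to follow the template of the proof of Theorem~\ref{thm:uni_upperbound}, but to replace the crude count $\#\mathcal{Q}^{\ast}(\gamma_{K,n}^nC^n,n)$ of all bounded-determinant forms by a sharper count of their additively indecomposable constituents, weighted by how often each one can occur. Fix an arbitrary $n$-ary classical (resp.\ non-classical) totally positive definite $Q$. First I would run the stripping procedure of Theorem~\ref{thm:main1}: while $\Nm_{K/\Q}(\det(Q))\ge\gamma_{K,n}^nC^n$ — which forces $\det(Q)\succ 0$, so $Q$ is definite and the theorem applies — peel off a summand $\alpha L^2$. Since $\det(Q-\alpha x_j^2)=\det(Q)-\alpha A_{jj}$ with $A_{jj}\succ0$ and $\alpha\succ0$, the determinant strictly decreases in every embedding, hence so does $\Nm_{K/\Q}(\det(\cdot))$; as this quantity ranges over a discrete subset of $\R^+$ on integral totally positive definite forms, the process halts and we reach $Q=H_0+\sum_i\alpha_iL_i^2$, where $H_0$ is totally positive semi-definite with $\Nm_{K/\Q}(\det(H_0))<\gamma_{K,n}^nC^n$.

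Next I would dispose of the square part and then decompose $H_0$. Writing each $\alpha_i$ as a finite sum of indecomposable integers and grouping associates modulo squares of units turns $\sum_i\alpha_iL_i^2$ into $\sum_{\alpha\in\mathcal{I}}\alpha\cdot(\text{a sum of squares of }\OK\text{-linear forms in }n\text{ variables})$; by the definition of $g_{\OK}(n)$, each inner sum of squares can be rewritten with at most $g_{\OK}(n)$ squares, so the square part is represented by $\sum_{\alpha\in\mathcal{I}}\alpha(y_{1,\alpha}^2+\cdots+y_{g_{\OK}(n),\alpha}^2)$, contributing $g_{\OK}(n)\cdot\#\mathcal{I}$ to the rank. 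For $H_0$, decompose it maximally, $H_0=\sum_jH_j$ with each $H_j$ additively indecomposable; rank-one summands have the shape $(\text{indecomposable integer})\cdot(\text{linear form})^2$ and fold into the square part. The key estimate for the full-rank summands is superadditivity of the determinant on positive semi-definite matrices, $\det(A+B)\ge\det A+\det B$ for $A,B\succeq0$: when $A\succ0$ this is $\det(A+B)=\det(A)\prod_i(1+\mu_i)\ge\det A+\det B$ with $\mu_i\ge0$ the eigenvalues of $A^{-1/2}BA^{-1/2}$, and the general case follows by continuity. Applying it embedding by embedding to $H_0=\sum_jH_j$ and multiplying over embeddings gives $\Nm_{K/\Q}(\det(H_0))\ge\sum_j\Nm_{K/\Q}(\det(H_j))$; since $\Nm_{K/\Q}(\det(\cdot))$ is an equivalence invariant, if $m$ of the $H_j$ are equivalent to a fixed $H\in\mathcal{F}^{\ast}_n$ then $m\cdot\Nm_{K/\Q}(\det(H))\le\Nm_{K/\Q}(\det(H_0))<\gamma_{K,n}^nC^n$, so $m\le\lfloor\gamma_{K,n}^nC^n/\Nm_{K/\Q}(\det(H))\rfloor$. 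Taking that many copies, each on $n$ variables, of every $H\in\mathcal{F}^{\ast}_n$ and adjoining the square part yields a form of the claimed rank; by construction it represents $Q$, hence is $n$-universal.

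The step I expect to be the genuine obstacle is controlling the summands $H_j$ of intermediate rank $1<r<n$: these are additively indecomposable forms that are neither sums of squares nor members of $\mathcal{F}^{\ast}_n$, and since their determinant is $0$ when viewed $n$-arily, the superadditivity bound says nothing about their multiplicity (indeed, already for $n=4$ one meets forms like $G\perp G$ with $G$ a binary additively indecomposable form, which is no element of $\mathcal{F}^{\ast}_4$ and no sum of squares). The resolution I would pursue is to show that each additively indecomposable totally positive definite form of rank $r<n$ with $\Nm_{K/\Q}(\det)<\gamma_{K,n}^nC^n$ is itself represented by some full-rank $H\in\mathcal{F}^{\ast}_n$ — for instance by gluing it to an auxiliary totally positive definite block chosen with indecomposable integers on the diagonal, so that the enlarged form stays additively indecomposable and the lower-rank piece is absorbed by the $\mathcal{F}^{\ast}_n$-part without enlarging the count. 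Together with the routine checks that the stripping terminates and that the multiplicity bookkeeping respects equivalence, this would complete the argument and assemble the two contributions into the stated bound.
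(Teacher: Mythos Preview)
Your approach is essentially the paper's: strip scaled squares via Theorem~\ref{thm:main1} until $\Nm_{K/\Q}(\det H_0)<\gamma_{K,n}^nC^n$, absorb the square part via the definition of $g_{\OK}(n)$, decompose $H_0$ into additively indecomposable pieces, and bound the multiplicity of each equivalence class by a determinant inequality. The only variation is that you use bare superadditivity $\det(A+B)\ge\det A+\det B$ in each embedding and then the elementary inequality $\prod_i\big(\sum_j a_{ij}\big)\ge\sum_j\prod_i a_{ij}$ to pass to norms and obtain $\Nm_{K/\Q}(\det H_0)\ge\sum_j\Nm_{K/\Q}(\det H_j)$, whereas the paper quotes the Minkowski determinant theorem $\det(M+N)^{1/n}\succeq\det(M)^{1/n}+\det(N)^{1/n}$; since Minkowski implies superadditivity, both routes deliver the same multiplicity bound $\lfloor\gamma_{K,n}^nC^n/\Nm_{K/\Q}(\det H)\rfloor$.

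Your worry about summands of intermediate rank $1<r<n$ is well founded, and in fact the paper's proof does not address it either: it simply asserts that ``$H=\sum H_i$ with $H_i$ being equivalent to one of the forms in $\mathcal{F}^{\mathcal{C}}_n$'', tacitly assuming every atomic summand has full rank. For $n=2$---the case the paper actually applies later---there is no issue, since any nonzero semi-definite binary summand of rank $<2$ is rank one and folds into the square part; for $n\ge 3$ some device like your gluing argument, or else enlarging the universal form by adjoining the $\mathcal{F}^{\ast}_r$ for $r<n$, would be needed to close the argument in either write-up.
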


\begin{proof}
Let $Q$ be a totally positive definite $n$-ary quadratic form over $\OK$. Applying Theorem~\ref{thm:main1}, we can express $Q$ as
\[
Q(x_1,\ldots,x_n)=\sum_{i=1}^{m} \alpha_i L_i(x_1,\ldots,x_n)^2+H(x_1,\ldots,x_n),
\]
where $H$ is some $n$-ary totally positive semi-definite quadratic form that satisfies $N_{K/\Q}(\det(H))< \gamma_{K,n}^nC^n$. $H$ decomposes as a sum of additively indecomposable $n$-ary quadratic forms, where $H=\sum H_i$ with $H_i$ being equivalent to one of the forms in $\mathcal{F}^{\mathcal{C}}_n$. 
We can bound the number of indecomposable forms of the given determinant appearing in this decomposition. That is, at most $\left\lfloor\frac{\gamma_{K,n}^nC^n}{N_{K/\Q}(\det(H_i))}\right\rfloor$ copies of quadratic forms equivalent to $H_i$ can appear in the decomposition of $H$, which follows from $N_{K/\Q}(\det(H))< \gamma_{K,n}^nC^n$ and the use of the following well-known result, which states that for totally positive definite $n$-ary matrices $M$ and $N$, we have $\det(M+N)^{1/d}\succeq\det(M)^{1/d}+\det(N)^{1/d}$ (this follows from the Minkowski Determinant Theorem (see, for example, Theorem~7.8.8 in \cite{HJ})). This shows that $Q$ can be represented by the quadratic form
\[
\sum_{\alpha\in \mathcal{I}}\alpha(y_{1,\alpha}^2+\cdots+y_{g_{\OK}(n),\alpha}^2)+\sum_{H\in \mathcal{F}^{\mathcal{C}}_n}\sum_{k=1}^{\left\lfloor\frac{\gamma_{K,n}^nC^n}{N_{K/\Q}(\det(H))}\right\rfloor} H\big(z_{1,H}^{(k)},\ldots,z_{n,H}^{(k)}\big).
 \qedhere\]  
\end{proof}

For a given $\delta\in\OK^{\vee,+}$, where
$\OK^{\vee}=\{\delta\in K\mid\tr_{K/\Q}(\alpha\delta)\in\Z\text{ for all }\alpha\in\OK\}$ is the \textit{codifferent} of $K$, let us denote by $U(\delta)$ the set 
\[
U(\delta)=\{\alpha\in\OK^{+}\mid\tr_{K/\Q}(\delta\alpha)=1\}.
\]
It is clear that if $\tr_{K/\Q}(\alpha\delta)=1$ for some $\alpha\in\OK^{+}$ and $\delta\in\OK^{\vee}$ totally positive, then $\alpha$ is indecomposable in $\OK$. 
For $n$ elements $\delta_1,\ldots,\delta_n\in\OK^{\vee,+}$ let
\begin{multline*}
R^{\mathcal{C}}(\delta_1,\ldots,\delta_n)=\Big\{Q(x_1,\ldots,x_n)=\sum_{i=1}^n{\alpha_i}x_i^2+2\sum_{1\leq i<j\leq n}\beta_{i,j}x_ix_j\mid \\\beta_{i,j}\in\OK,\alpha_i\in U(\delta_i),  Q \textit{ totally positive definite}\Big\}.
\end{multline*}
In the above, we consider only totally positive definite forms. We have the following.

\begin{theorem} \label{thm:uni_lowerbound}
Let $K$ be a totally real number field of degree $d$.
Let $\delta_1,\ldots,\delta_n\in\OK^{\vee,+}$. Then every classical $n$-universal totally positive definite quadratic form over $\OK$ has at least $\frac{\sqrt[n]{\#R^{\mathcal{C}}(\delta_1,\ldots,\delta_n)}}{d}$ variables.  
\end{theorem}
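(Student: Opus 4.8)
The plan is to suppose that a classical $n$-universal totally positive definite quadratic form $F$ over $\OK$ of rank $r$ exists, use $n$-universality to realize every $Q\in R^{\mathcal{C}}(\delta_1,\dots,\delta_n)$ as a representation inside $F$, and then bound how many such $Q$ there can be by counting the vectors of $F$ that are allowed to appear on the diagonal of such a representation. If $R^{\mathcal{C}}(\delta_1,\dots,\delta_n)=\emptyset$ there is nothing to prove, so assume it is nonempty. For $Q\in R^{\mathcal{C}}(\delta_1,\dots,\delta_n)$, $n$-universality gives $N\in\OK^{r\times n}$ with $M_Q=N^{t}M_FN$; writing $v_1,\dots,v_n\in\OK^{r}$ for the columns of $N$ and $F(v)=v^{t}M_Fv$, we get $(M_Q)_{ij}=v_i^{t}M_Fv_j$, and in particular $F(v_i)=(M_Q)_{ii}=\alpha_i\in U(\delta_i)$ by the definition of $R^{\mathcal{C}}$. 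Since $Q$ is recovered from the tuple $(v_1,\dots,v_n)$, the map $Q\mapsto(v_1,\dots,v_n)$ is injective, so
\[
\#R^{\mathcal{C}}(\delta_1,\dots,\delta_n)\;\le\;\prod_{i=1}^{n}N(\delta_i),\qquad N(\delta):=\#\{\,v\in\OK^{r}\mid F(v)\in U(\delta)\,\}.
\]

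The key step would be the estimate $N(\delta)\le dr$ for each $\delta\in\OK^{\vee,+}$. To prove it, I would attach to $\delta$ the trace pairing $B(x,y)=\tr_{K/\Q}\!\bigl(\delta\cdot x^{t}M_Fy\bigr)$ on $\OK^{r}$, viewed as a free $\Z$-module of rank $dr$. Since $F$ is classical, $x^{t}M_Fy\in\OK$, and since $\delta$ lies in the codifferent $\OK^{\vee}$ we get $B(x,y)\in\Z$; and because $\delta\succ0$ and $F$ is totally positive definite, $B(x,x)=\sum_{k=1}^{d}\sigma_k(\delta)\,\sigma_k(F(x))>0$ for $x\ne0$, so $B$ is a positive definite integral quadratic form of rank $dr$. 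Now if $v,w$ satisfy $F(v),F(w)\in U(\delta)$, then $B(v,v)=\tr_{K/\Q}(\delta F(v))=1=B(w,w)$ directly from the definition of $U(\cdot)$; expanding $0\le B(v\pm w,v\pm w)=2\pm2B(v,w)$ forces $B(v,w)\in\{-1,0,1\}$, and $B(v,w)=\pm1$ gives $B(v\mp w,v\mp w)=0$, hence $v=\pm w$ by positive definiteness. Thus the vectors with $F$-value in $U(\delta)$, recorded up to sign (which is natural since $F(v)=F(-v)$), form a $B$-orthogonal, hence $\Q$-linearly independent, subset of $\OK^{r}\otimes_{\Z}\Q\cong K^{r}$; as $\dim_{\Q}K^{r}=dr$, there are at most $dr$ of them. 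Feeding this into the displayed inequality gives $\#R^{\mathcal{C}}(\delta_1,\dots,\delta_n)\le(dr)^{n}$, equivalently $r\ge\sqrt[n]{\#R^{\mathcal{C}}(\delta_1,\dots,\delta_n)}\,/\,d$.

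The main obstacle is entirely in the second step, and it is more bookkeeping than depth: one must check that $B$ is genuinely $\Z$-valued — this is precisely where the hypotheses $\delta_i\in\OK^{\vee,+}$ and the classicality of $F$ are used — and observe that ``$F(v)\in U(\delta)$'' says exactly that $v$ is a minimal (squared length $1$) vector of the positive definite integral lattice $(\OK^{r},B)$, so that the elementary fact that distinct minimal vectors of squared length $1$ are orthogonal up to sign does the counting. The handling of the $v\leftrightarrow-v$ symmetry, and the translation between admissible tuples $(v_1,\dots,v_n)$ and the forms $Q$ they determine, should be written out with care; the non-classical analogue of the statement, which is not needed here, would require replacing $B$ by a half-integral variant.
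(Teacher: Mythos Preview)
Your argument is correct and rests on the same key observation as the paper's: the representation vectors $v_i$ with $F(v_i)=\alpha_i\in U(\delta_i)$ become norm-$1$ vectors of the positive definite integral lattice attached to the trace form $\tr_{K/\Q}(\delta_i F)$, and such vectors are pairwise orthogonal up to sign, hence number at most $dr$. Where you diverge is in the counting step. You bound $\#R^{\mathcal{C}}\le\prod_i N(\delta_i)$ and estimate each factor $N(\delta_i)$ separately; the paper instead forms the tensor product $\bigotimes_{i=1}^n\tr_{K/\Q}(\delta_i F)$, a single positive definite $\Z$-lattice of rank $(rd)^n$, sends each $H\in R^{\mathcal{C}}$ to the pure tensor $v_1\otimes\cdots\otimes v_n$, and then bounds $\#R^{\mathcal{C}}$ by the number of minimal vectors of that tensor lattice (citing an external result). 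Your coordinate-wise approach is more elementary and entirely self-contained --- no tensor product, no outside lemma on minimal vectors --- while the paper's single-lattice packaging is what transfers most cleanly to the non-classical analogue proved immediately afterwards, where the minimum is $2$ rather than $1$. One caveat you already flag: the $v\leftrightarrow -v$ symmetry means your displayed inequality, read literally, gives $N(\delta)\le 2dr$ rather than $dr$, hence $\#R^{\mathcal{C}}\le(2dr)^n$; the paper's tensor argument carries the same slack in passing from minimal vectors to $H$'s, so this affects only the constant, not the method.
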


\begin{proof}
Let $Q$ be an $n$-universal quadratic form over $\OK$ of rank $r$. For each $\delta \in \OK^{\vee,+}$, $\tr_{K/\Q}(\delta Q)$ is a positive definite quadratic form over $\Z$ of rank $rd$ (cf. \cite{Y}). It is classical over $\Z$ if $Q$ is classical over $\OK$, as in that case the Gram matrix of $Q$ is integral, and $\tr_{K/\Q}(\OK^{\vee}\OK)\subset \Z$, by definition.

As $Q$ represents every quadratic form in $R^{\mathcal{C}}(\delta_1,\ldots,\delta_n)$, for each
\[
H(x_1,\ldots,x_n)=\sum_{i=1}^n{\alpha_i}x_i^2+2\sum_{1\leq i<j\leq n}\beta_{i,j}x_ix_j\in R^{\mathcal{C}}(\delta_1,\ldots,\delta_n),
\]
there exist $v_1,\ldots,v_n\in\OK^{r}$ such that $Q(v_i)=\alpha_i$ and $B(v_i,v_j)=\beta_{i,j}$ where $B$ is the bilinear form corresponding to $Q$. Now, the quadratic form
\[
\otimes_{i=1}^n \tr_{K/\Q}(\delta_i Q),
\]
where $\otimes$ is the tensor product of the quadratic forms (see \cite[Chapter 7]{Ki}), is also a positive definite quadratic form over $\Z$ of rank $(rd)^n$. For the above vectors $v_1,\ldots,v_n$ corresponding to the representation of $H$ we get 
\begin{align*}
(\otimes_{i=1}^n \tr_{K/\Q}(\delta_i Q))(\otimes_{i=1}^n v_i)&=\prod_{i=1}^n \tr_{K/\Q}(\delta_i Q(v_i))\\
&=\prod_{i=1}^n \tr_{K/\Q}(\delta_i \alpha_i)\\
&=1,
\end{align*}
where the last equality follows from the definition of an element in $U(\delta_i)$. Clearly, each representation of $H$ by $Q$, and hence each $H\in R^{\mathcal{C}}(\delta_1,\ldots,\delta_n)$, corresponds to the minimum of the tensored form. The number of minimal vectors bounds the rank of this form (as in \cite[Theorem 25]{Y}). Thus, the rank of $\otimes_{i=1}^n \tr_{K/\Q}(\delta_i Q)$ gives us an upper bound on the number of such vectors, which gives
\[
\#R^{\mathcal{C}}(\delta_1,\ldots,\delta_n)\leq (rd)^n.
\]
By this, the proof is complete.     
\end{proof}

The cardinality of the set $R^{\mathcal{C}}(\delta_1,\ldots,\delta_n)$ is at least as large as $\prod^n_{i=1}\#U(\delta_i)$. The true number of totally positive definite quadratic forms with a fixed diagonal (as in its Gram matrix) is of independent interest. 

Similarly as for classical $n$-universal quadratic forms, 
for $\delta_1,\ldots,\delta_n\in\OK^{\vee,+}$, let us define the set
\begin{multline*}
R(\delta_1,\ldots,\delta_n)=\Big\{Q(x_1,\ldots,x_n)=\sum_{i=1}^n{\alpha_i}x_i^2+\sum_{1\leq i<j\leq n}\beta_{i,j}x_ix_j\mid  \\\beta_{i,j}\in\OK,\alpha_i\in U(\delta_i),  Q \textit{ totally positive definite}\Big\}.
\end{multline*}
Using this set, we can prove the following.

\begin{theorem} \label{thm:uni_lowerbound_nonclass}
Let $K$ be a totally real number field of degree $d$.
Let $\delta_1,\ldots,\delta_n\in\OK^{\vee,+}$ be such that $\#R(\delta_1,\ldots,\delta_n)\geq 240$. Then every non-classical $n$-universal totally positive definite quadratic form over $\OK$ has at least $\frac{\sqrt[2n]{\#R(\delta_1,\ldots,\delta_n)}}{d}$ variables.  
\end{theorem}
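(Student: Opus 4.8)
The plan is to adapt the proof of Theorem~\ref{thm:uni_lowerbound}, replacing the orthogonality of minimal vectors used there by a root-system bound. So let $Q$ be a non-classical $n$-universal totally positive definite quadratic form over $\OK$ of rank $r$. For each $\delta\in\OK^{\vee,+}$ the trace form $\tr_{K/\Q}(\delta Q)$ is again a positive definite quadratic form over $\Z$ of rank $rd$, but the essential difference is that it need no longer be classical over $\Z$, since the Gram matrix of $Q$ now has entries only in $\tfrac12\OK$. Since $Q$ represents every $H\in R(\delta_1,\dots,\delta_n)$, each such $H$ yields vectors $v_1,\dots,v_n\in\OK^r$ with $Q(v_i)=\alpha_i$ and $B(v_i,v_j)=\beta_{i,j}$, and the tensor $T=\otimes_{i=1}^n\tr_{K/\Q}(\delta_i Q)$ is a positive definite quadratic form over $\Z$ of rank $(rd)^n$ with $T(\otimes_{i=1}^n v_i)=\prod_{i=1}^n\tr_{K/\Q}(\delta_i\alpha_i)=1$.

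The next step is to see what replaces orthogonality. In the classical case $T$ is classical over $\Z$, its minimum is $1$, and distinct minimal vectors are orthogonal, so there are at most $\mathrm{rank}(T)=(rd)^n$ of them up to sign; this is the bound used in Theorem~\ref{thm:uni_lowerbound}. When $Q$ is non-classical, $T$ is still positive definite of rank $(rd)^n$ and still takes the value $1$; after the rescaling that makes it integer-valued (equivalently, working inside the sublattice generated by the split vectors $\otimes_{i=1}^n v_i$), its norm-$1$ vectors have pairwise inner products in $\{0,\pm\tfrac12\}$ for non-proportional pairs, and since the configuration is closed under the reflections $s_v(w)=w-2\langle v,w\rangle v$ it forms a reduced, simply-laced root system $\Phi$ of rank $m\le(rd)^n$. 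Each $H\in R(\delta_1,\dots,\delta_n)$ contributes the root $\otimes_{i=1}^n v_i$, so (up to the sign ambiguity of the $v_i$, handled as in the classical argument) $\#R(\delta_1,\dots,\delta_n)$ is bounded by the number of roots of $\Phi$ up to sign.

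It remains to bound $\#\Phi$ by a function of $m$. By the classification of irreducible root systems, a reduced simply-laced root system of rank $m$ has at most $2m^2$ roots once summands of type $E_6,E_7,E_8$ are excluded, so in that case $\#\Phi/2\le m^2\le(rd)^{2n}$ directly. Controlling these exceptional summands is exactly the role of the hypothesis $\#R\ge 240$: the number $240$ is the root count of $E_8$, and a reduced root system with at least $240$ roots up to sign is forced to have rank at least $16$, in which range the classification still yields $\#\Phi/2\le m^2\le(rd)^{2n}$ even when $E_6,E_7,E_8$ occur as summands. Combining the two previous steps gives $\#R(\delta_1,\dots,\delta_n)\le(rd)^{2n}$, that is, $r\ge\sqrt[2n]{\#R(\delta_1,\dots,\delta_n)}/d$.

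The main obstacle I expect is the bookkeeping forced by non-classicality in the middle step: the tensor $T$ is only $\tfrac{1}{2^n}\Z$-valued in general, so one has to be precise about which integral lattice the $\{0,\pm\tfrac12\}$ dichotomy is applied to, and must check that the rescaling (or the passage to the split sublattice) neither discards the vectors $\otimes_{i=1}^n v_i$ nor corrupts the inner-product pattern that makes $\Phi$ a root system. A secondary point is justifying that $240$ is the correct threshold -- essentially identifying the rank at which the $D_m$-type count $2m(m-1)$ dominates every $E_8$-containing system -- and, as already needed in Theorem~\ref{thm:uni_lowerbound}, controlling the failure of injectivity of $H\mapsto\otimes_{i=1}^n v_i$ caused by independent sign changes of the $v_i$.
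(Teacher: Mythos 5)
Your proposal follows essentially the same route as the paper: the paper also forms $\otimes_{i=1}^n \tr_{K/\Q}(\delta_i Q)$, notes it is non-classical but that its double is classical with minimal vectors of norm $2$, and then invokes the root-system argument (citing the proof of Proposition~7.4 in \cite{KT}) to get $(rd)^{2n}\geq \#R(\delta_1,\ldots,\delta_n)$, with the threshold $240$ playing exactly the $E_8$-excluding role you describe. Your write-up just makes explicit the root-system classification step that the paper delegates to the citation.
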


\begin{proof}
Similarly to the proof of Theorem~\ref{thm:uni_lowerbound}, we consider the quadratic form $\otimes_{i=1}^n \tr_{K/\Q}(\delta_i Q)$, which is non-classical in this case. However, its double is classical, and its minimal vectors have the norm $2$. For $\#R(\delta_1,\ldots,\delta_n)\geq 240$, we can closely follow \cite[Proof of Proposition~7.4]{KT} to deduce $(rd)^{2n}\geq \#R(\delta_1,\ldots,\delta_n)$, which gives our assertion.   
\end{proof}

\section{Binary quadratic forms over the real quadratic fields}
The following section focuses on applications of the previous theorems in a few concrete real quadratic fields. Specifically, building on the works of \cite{BCIO, PW}, which give us exact values of the generalised Hermite constants and Icaza's bounds, we can classify all classical, additively indecomposable binary quadratic forms over $\Q(\sqrt{2}),\Q(\sqrt{3}), \Q(\sqrt{5}),\Q(\sqrt{6})$ and $\Q(\sqrt{21})$. 
Many of the auxiliary results below apply to general totally real number fields and/or quadratic forms of arbitrarily large ranks.

Let $D$ be a square-free positive rational integer, and let $K=\Q(\sqrt{D})$ be the corresponding real quadratic number field. For the ring of integers of $K=\Q(\sqrt{D})$, depending on the residue of $D$ modulo $4$, we have a different description of $\OK,$ specifically $\OK=\Z+\Z\omega_D,$ where 
\[\omega_D=\begin{cases}
\sqrt{D}&\mbox{ if $D\equiv 2,3\pmod{4}$}\\
\dfrac{1+\sqrt{D}}{2}& \mbox{ otherwise. }
\end{cases}\]
Let us denote by $\omega_D'$ the conjugate of $\omega_D.$ For the real quadratic fields, we have a complete description of indecomposable integers in terms of the continued fraction of $-\omega_D'$. Let $[u_0,\overline{u_1,\ldots,u_s}]$ be the \textit{continued fraction} of $-\omega_D'$, where $u_i \in \N$, and set
\[
\frac{p_i}{q_i}=[u_0,\ldots,u_i]
\]
where $p_i,q_i\in\N$ are coprime. Moreover, we put $p_{-1}=1$ and $q_{-1}=0$. Then, the integer $\alpha_i=p_i+q_i\omega_D$ is called a \textit{convergent} of $-\omega_D'$, and the integer $\alpha_{i,r}=\alpha_{i}+r\alpha_{i+1}$ with $0\leq r\leq u_{i+2}$ is a \textit{semiconvergent} of $-\omega_D'$. It is known \cite{Pe,DS} that the indecomposable integers in $\OK$ are precisely the integers $\alpha_{i,r}$ and $\alpha_{i,r}'$ with $i$ odd and $0\leq r\leq u_{i+2}$. 

\label{sec:non-dec_forms_qua} 
\subsection{Existence}\label{sec:exists}

When it comes to indecomposable integers, it is clear that in every totally real number field $K$, there is an example of such an integer. For one, all totally positive units are indecomposable. Furthermore, at least in the case of $\Q$ and $\Q(\sqrt{5})$, this covers the sets of all indecomposable integers (up to multiplication by squares of units). One may ask whether the same is true for quadratic forms, assuming that their determinant is totally positive. In the following, we resolve this question for real quadratic fields and binary quadratic forms.

\begin{theorem}[=Theorem~\ref{thm:exist}]\label{thm:exist_c}
Let $D>1$ be a square-free integer. Then there exists an additively indecomposable binary quadratic form in $\Q(\sqrt{D})$ with a non-zero determinant.
\end{theorem}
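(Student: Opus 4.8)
The plan is to exhibit, for every square-free $D>1$, an explicit totally positive definite binary form over $\OK=\Z[\omega_D]$ that admits only trivial additive decompositions. The one general device I would isolate first is the following.

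\emph{Key Lemma.} If $\alpha,\beta\in\OK^{+}$ are indecomposable, $b\in\OK\setminus\{0\}$, and the form $Q$ with Gram matrix $\bigl(\begin{smallmatrix}\alpha&b\\ b&\beta\end{smallmatrix}\bigr)$ is totally positive definite (equivalently, since $\alpha$ is indecomposable, $\alpha\beta-b^{2}\succ 0$), then $Q$ is additively indecomposable. Indeed, in a decomposition $Q=Q_{1}+Q_{2}$ into totally positive semi-definite forms with Gram matrices $\bigl(\begin{smallmatrix}\alpha_i&b_i\\ b_i&\beta_i\end{smallmatrix}\bigr)$ one has $\alpha_{i},\beta_{i}\succeq 0$, $\alpha_{1}+\alpha_{2}=\alpha$ and $\beta_{1}+\beta_{2}=\beta$; since $\alpha$ is indecomposable some $\alpha_{i}$ vanishes, say $\alpha_{1}=0$, and a totally positive semi-definite $2\times2$ matrix with a zero on the diagonal has that row and column zero (this holds for real matrices, hence in every embedding), so $b_{1}=0$ and $Q_{1}=\beta_{1}y^{2}$. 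Applying the same to $\beta=\beta_{1}+\beta_{2}$ yields either $\beta_{1}=0$, whence $Q_{1}=0$, or $\beta_{2}=0$, whence $Q_{2}$ has Gram matrix $\bigl(\begin{smallmatrix}\alpha&b\\ b&0\end{smallmatrix}\bigr)$, which is totally positive semi-definite only if $b=0$ — contradicting the hypothesis.

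Next I would split into cases according to the structure of the indecomposable integers of $\OK$, which is completely understood for real quadratic fields via the continued fraction of $-\omega_D'$ \cite{DS,Pe}. If $\OK$ contains an indecomposable integer $\alpha$ that is not a unit — equivalently $\Nm_{K/\Q}(\alpha)\geq 2$ — then its conjugate $\alpha'$ is again a totally positive indecomposable with $\Nm_{K/\Q}(\alpha')\geq 2$ (apply the nontrivial automorphism to a putative decomposition), so the form with Gram matrix $\bigl(\begin{smallmatrix}\alpha&1\\ 1&\alpha'\end{smallmatrix}\bigr)$ has determinant $\Nm_{K/\Q}(\alpha)-1\geq 1$, a totally positive rational integer; it is therefore totally positive definite, and by the Key Lemma additively indecomposable with nonzero determinant.

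The remaining, harder case is that of the (infinitely many) fields in which every indecomposable integer is a unit, e.g.\ $D=5$, $D=21$, or $D=n^{2}-1$. Here the Key Lemma cannot take indecomposables on the diagonal, since a product of units has norm $1$ and leaves no room for a nonzero off-diagonal entry. Instead I would take a form with Gram matrix $\bigl(\begin{smallmatrix}2&1\\ 1&c\end{smallmatrix}\bigr)$ for a carefully chosen totally positive $c\in\OK$ with $2c-1\succ 0$ (so $Q$ is totally positive definite), with $\sigma_i(c)<1$ in some embedding, and with $\Nm_{K/\Q}(2c-1)$ small, and analyse $Q=Q_{1}+Q_{2}$ through the two ways of splitting the diagonal entry $2=\alpha_{1}+\alpha_{2}$: the split $\{0,2\}$ forces $Q_{1}=c_{1}y^{2}$ with $2c_{1}\preceq 2c-1$, hence $c_{1}=0$ once $c-\tfrac12$ dominates no totally positive integer; the split $\{1,1\}$ forces $(2b_{1}-1)^{2}\preceq 2c-1$ for some $b_{1}\in\OK$, and for a good choice of $c$ the only surviving candidates for $2b_{1}-1$ are (associates of) totally positive units, which are then eliminated — the value $1$ by $\sigma_i(c)<1$ and the rest by the size of $2c-1$. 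The main obstacle is producing such a $c$ uniformly: it requires the explicit description of these fields and probably a short further case split according to the norm of the fundamental unit and the decomposition type of $2$ in $\OK$. The instances computed in Section~\ref{sec:concrete}, such as $2x^{2}+2xy+(3+\sqrt5)y^{2}$ over $\Q(\sqrt5)$ (that is, $c=3+\sqrt5$), indicate the shape of $c$ to aim for.
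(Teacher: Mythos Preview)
Your Key Lemma is correct and is precisely the paper's Proposition~\ref{prop:inde_from_inde}; your case of a non-unit indecomposable is also correct and reappears in the paper as Lemma~\ref{lem:binary}. The gap is your case~3: you acknowledge that producing a suitable $c$ uniformly ``requires the explicit description of these fields and probably a short further case split'', and indeed this is where the work lies. As written, the proposal does not prove the theorem for the infinitely many fields (e.g.\ $D=3,5,13,21,\ldots$) in which every indecomposable is a unit.

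There is, however, a one-line fix you overlooked, and it comes straight out of your own Key Lemma. The statement does not ask for a \emph{classical} form. The proof of your Key Lemma uses only that the diagonal entries of an integral summand lie in $\OK$ and that a totally positive semi-definite $2\times2$ matrix with a zero on the diagonal has a zero row; both facts hold for non-classical summands (off-diagonal in $\tfrac12\OK$). Taking $\alpha=\beta=1$ and $b=\tfrac12$ gives $x^{2}+xy+y^{2}$, totally positive definite with determinant $3/4$, and since $1$ is indecomposable in every $\OK$ your argument shows it is (non-classically) additively indecomposable for all $D$. This is exactly the paper's Theorem~\ref{thm:exist_nc}, which already suffices for the statement as phrased.

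For the \emph{classical} statement the paper does essentially what you anticipate in case~3, but it organises the split by $D\pmod 4$ (and then $D\pmod{12}$) rather than by the structure of indecomposables: it writes down an explicit form with leading coefficient $2$, $3$, or $4$ (Propositions~\ref{prop:exi3}--\ref{prop:exi1}) and checks by hand all ways of splitting that small rational integer, treating $D\in\{5,13,17\}$ separately. Your Key Lemma does not cover those forms since $2,3,4$ need not be indecomposable, so the paper's classical argument is genuinely different from (and does not reduce to) your uniform lemma.
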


We prove it in parts, starting with classical forms, and $D\equiv 3\pmod{4}$. 

\begin{prop} \label{prop:exi3}
Let $D\equiv 3\pmod{4}$. Then the quadratic form $Q(x,y)=2x^2+2\sqrt{D}xy+\frac{D+1}{2}y^2$ of determinant $\det(Q)=1$ is additively indecomposable over $\mathcal{O}_{\Q(\sqrt{D})}$.
\end{prop}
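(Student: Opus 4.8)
The plan is to show directly that $Q(x,y) = 2x^2 + 2\sqrt{D}xy + \frac{D+1}{2}y^2$ cannot be written as a sum of two totally positive semi-definite classical binary quadratic forms, unless one of the summands is zero. First I would record the basic data: the Gram matrix of $Q$ is $M_Q = \begin{pmatrix} 2 & \sqrt{D} \\ \sqrt{D} & \frac{D+1}{2}\end{pmatrix}$, which is classical since $D \equiv 3 \pmod 4$ gives $\frac{D+1}{2} \in \Z$, and $\det(M_Q) = D+1 - D = 1$, so in particular $\det Q = 1$ is totally positive and $Q$ is totally positive definite (one checks the other embedding is positive definite too, since $\sigma_2(M_Q) = \begin{pmatrix} 2 & -\sqrt{D} \\ -\sqrt{D} & \frac{D+1}{2}\end{pmatrix}$ also has determinant $1$ and positive trace). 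The key structural observation is that the diagonal entry $2$ is as small as a nonzero totally positive integer diagonal entry can be in a totally positive definite form over $\OK$: the element $2$ is indecomposable in $\OK$ when $D \equiv 3 \pmod 4$ (indeed $2 = \alpha_1 + \alpha_2$ with $\alpha_i \succeq 0$ in $\Z[\sqrt{D}]$ forces, by looking at traces and the shape $a + b\sqrt{D}$ with $a \geq |b|\sqrt{D}$, that one $\alpha_i$ is $0$).

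Suppose $Q = G + H$ with $G, H$ classical, totally positive semi-definite, and neither zero. Write $M_G = \begin{pmatrix} g_1 & g_3 \\ g_3 & g_2\end{pmatrix}$, $M_H = \begin{pmatrix} h_1 & h_3 \\ h_3 & h_2\end{pmatrix}$ with all entries in $\OK$, $g_i + h_i$ matching the entries of $M_Q$. Totally positive semi-definiteness forces $g_1, g_2, h_1, h_2 \succeq 0$ and the diagonal entries are in $\OK^+ \cup \{0\}$. From $g_1 + h_1 = 2$ and indecomposability of $2$, one of $g_1, h_1$ is $0$; say $h_1 = 0$. But a totally positive semi-definite matrix with a zero diagonal entry must have the entire corresponding row and column zero (else some $2\times 2$ principal-type evaluation is negative in some embedding — concretely $H(1, t) = 2 h_3 t + h_2 t^2$ cannot be $\succeq 0$ for all integers $t$ unless $h_3 = 0$). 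Hence $h_3 = 0$, so $H = h_2 y^2$ with $h_2 \succeq 0$, and then $G = Q - h_2 y^2$ has Gram matrix $\begin{pmatrix} 2 & \sqrt{D} \\ \sqrt{D} & \frac{D+1}{2} - h_2\end{pmatrix}$. For $G$ to be totally positive semi-definite we need, by Sylvester-type reasoning, $\det G = 2(\frac{D+1}{2} - h_2) - D = 1 - 2h_2 \succeq 0$ in both embeddings; since $h_2 \in \OK^+ \cup \{0\}$ and $h_2 \succeq 0$, this forces $h_2 = 0$ (a nonzero totally positive integer has all conjugates $> 0$, and $2h_2 \preceq 1$ would need $\sigma_i(h_2) \leq \tfrac12 < 1$ for both $i$, impossible for a nonzero algebraic integer whose conjugates are positive, since then $\Nm(h_2) < 1$; alternatively $h_2$ totally positive integer implies $h_2 \succeq 1$ is false in general, so one must argue via $\Nm(h_2) \geq 1$). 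Thus $H = 0$, contradiction.

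The main obstacle, and the step to be careful with, is the last one: ruling out a nonzero totally positive semi-definite (and integral) subtracted piece $h_2 y^2$ with $0 \prec 2h_2 \preceq 1$. The clean way is: if $h_2 \in \OK$ is totally positive then $\Nm_{K/\Q}(h_2) \geq 1$, so $\sigma_1(h_2)\sigma_2(h_2) \geq 1$, which is incompatible with both $\sigma_i(h_2) \leq \tfrac12$. If instead $h_2 = 0$ we already have the contradiction. (One should also handle the symmetric case $g_1 = 0$ identically, and note that the "zero diagonal entry kills its row and column" claim for classical totally positive semi-definite binary forms is exactly the rank-one degeneration, easily verified.) I would also remark that the same computation shows $Q$ is additively indecomposable even among \emph{non-classical} forms when $2$ remains indecomposable and the norm bound argument goes through unchanged, though the proposition as stated only claims the classical case. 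This completes the proof sketch.
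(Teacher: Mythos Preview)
Your argument has a genuine gap: the claim that $2$ is indecomposable in $\OK$ is false. Indeed $2 = 1 + 1$ with $1 \in \OK^+$, so $2$ \emph{is} decomposable as a totally positive integer. Your own trace/shape analysis actually shows this: if $\alpha_i = a_i + b_i\sqrt{D} \succeq 0$ are nonzero then $a_i \geq 1$, and $a_1 + a_2 = 2$, $b_1 + b_2 = 0$ forces $a_1 = a_2 = 1$ and then $|b_i|\sqrt{D} < 1$ gives $b_i = 0$. So the possible splittings of the $(1,1)$-entry are $g_1 + h_1 \in \{0+2,\ 1+1,\ 2+0\}$, and you have treated only the cases $g_1 \in \{0,2\}$.

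The missing case $g_1 = h_1 = 1$ is precisely the nontrivial one handled in the paper. There one writes $Q_2 = x^2 + 2\beta xy + \eta y^2$; since $\det(Q) = 1$, the Minkowski determinant inequality forces one of $\det(Q_1), \det(Q_2)$ to vanish, say $\det(Q_2) = 0$, giving $\eta = \beta^2$. A direct computation then shows
\[
\tfrac{1}{2}\tr_{K/\Q}(\det(Q_1)) = \frac{1-D}{2} - 2\bigl(b_1^2 + (b_2^2 - b_2)D\bigr) < 0
\]
for $\beta = b_1 + b_2\sqrt{D}$, so $Q_1$ cannot be totally positive semi-definite. Your treatment of the $h_1 = 0$ case (reducing to $1 - 2h_2 \succeq 0$ and ruling it out via $\Nm_{K/\Q}(h_2) \geq 1$) is correct and matches the paper's second case, but without the $1+1$ case the proof is incomplete. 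The closing remark about non-classical forms inherits the same error.
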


\begin{proof}
The only possible decomposition of 2 into totally positive integers is as $2=1+1$, which follows easily from considering the trace of $2$. Specifically, 1 is the only totally positive algebraic integer with the ratio of its trace and degree strictly less than $3/2$ \cite{schur}. There are two possible cases for the decomposition of $Q$. First as $Q_1+Q_2$, where
\begin{align*}
    Q_1(x,y)&=x^2+2(\sqrt{D}-\beta)xy+\left(\frac{D+1}{2}-\eta\right)y^2,\\
    Q_2(x,y)&=x^2+2\beta xy+\eta y^2,
\end{align*}   
for some $\beta,\eta\in\Z[\sqrt{D}]$, $\eta\succeq 0$. And the second, as $Q_3+Q_4$, where
\begin{align*}
Q_3(x,y)&=2x^2+2\sqrt{D}xy+\left(\frac{D+1}{2}-\eta\right)y^2,\\
    Q_4(x,y)&=\eta y^2,
\end{align*}   
for some $\eta \in \Z[\sqrt{D}]^+.$
Given that $\det(Q)=1$, at least one of the determinants of $Q_1$ and $Q_2$ is zero. Let us say $\det(Q_2)=0$. This implies $\eta=\beta^2$. Then
\[
\det(Q_1)=\frac{D+1}{2}-\beta^2-(\sqrt{D}-\beta)^2=\frac{1-D}{2}-2\beta^2+2\beta\sqrt{D}.
\]
Let $\beta=b_1+b_2\sqrt{D}$. Then
\[
\frac{1}{2}\tr_{K/\Q}(\det(Q_1))=\frac{1-D}{2}-2(b_1^2+(b_2^2-b_2)D).
\]
It is obvious that this trace cannot be non-negative for any $b_1,b_2\in\Z$.

This leaves the second case as the only other possible decomposition of $Q$. Here, 
\[
\det(Q_3)=2\left(\frac{D+1}{2}-\eta\right)-D=1-2\eta.
\] 
Since $\tr_{K/\Q}(\eta)\geq 2$, we cannot have $\det(Q_3)\succeq 0$. Therefore, $Q$ is additively indecomposable. 
\end{proof}

We proceed with $D\equiv 2\pmod{4}$. 

\begin{prop} \label{prop:exi2}
Let $D\equiv 2\pmod{4}$. Then the quadratic form \[Q(x,y)=2x^2+2(1+\sqrt{D})xy+\left(\frac{D}{2}+1+\sqrt{D}\right)y^2\] of determinant $\det(Q)=1$ is additively indecomposable over $\mathcal{O}_{\Q(\sqrt{D})}$.
\end{prop}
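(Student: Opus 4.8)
The plan is to follow the proof of Proposition~\ref{prop:exi3} almost verbatim, changing only the arithmetic, since $Q$ again has leading coefficient $2$ and determinant $1$. First one checks directly that $\det(Q) = 2\bigl(\tfrac D2 + 1 + \sqrt D\bigr) - (1+\sqrt D)^2 = D + 2 + 2\sqrt D - 1 - 2\sqrt D - D = 1$, so $Q$ is totally positive definite (leading coefficient $2$ and determinant $1$ are both $\succ 0$). Because the coefficient of $x^2$ is $2$, and---by the theorem of Schur already quoted in the proof of Proposition~\ref{prop:exi3}---the only way to write $2$ as a sum of two totally positive algebraic integers is $1+1$, every decomposition $Q = Q_1 + Q_2$ into classical totally positive semi-definite binary forms has one of two shapes: either $Q_1$ and $Q_2$ both have leading coefficient $1$, or one of them has leading coefficient $0$. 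In the latter case that form has a totally positive semi-definite Gram matrix with a vanishing diagonal entry, which forces the whole corresponding row to vanish, so the form is of the shape $\eta y^2$ with $\eta\in\Z[\sqrt D]^{+}$.

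In the first shape I would write $Q_1 = x^2 + 2(1+\sqrt D - \beta)xy + \bigl(\tfrac D2 + 1 + \sqrt D - \eta\bigr)y^2$ and $Q_2 = x^2 + 2\beta xy + \eta y^2$ with $\beta,\eta\in\Z[\sqrt D]$. Applying the Minkowski Determinant Theorem in each real embedding gives $1 = \sigma_i(\det Q)^{1/2}\ge\sigma_i(\det Q_1)^{1/2}+\sigma_i(\det Q_2)^{1/2}$; since each $\det Q_j$ is a totally nonnegative element of $\Z[\sqrt D]$---thus either $0$ or totally positive of norm $\ge 1$---one of $\det Q_1,\det Q_2$ must vanish, and by the symmetry $Q_1\leftrightarrow Q_2$ we may take $\det Q_2 = 0$, i.e.\ $\eta = \beta^2$ and $Q_2 = (x+\beta y)^2$. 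Then a short computation gives $\det Q_1 = -\tfrac D2 - \sqrt D - 2\beta^2 + 2(1+\sqrt D)\beta$, and writing $\beta = b_1 + b_2\sqrt D$ one obtains $\tfrac12\tr_{K/\Q}(\det Q_1) = -\tfrac D2 - 2(b_1^2 - b_1) - 2D(b_2^2 - b_2) \le -\tfrac D2 < 0$, because $b_1^2-b_1$ and $b_2^2-b_2$ are products of consecutive integers. Hence $\det Q_1$ is not totally nonnegative and $Q_1$ is not totally positive semi-definite, so this shape is impossible.

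In the second shape I would write $Q = Q_3 + Q_4$ with $Q_4 = \eta y^2$, $\eta\in\Z[\sqrt D]^{+}$, and $Q_3 = 2x^2 + 2(1+\sqrt D)xy + \bigl(\tfrac D2 + 1 + \sqrt D - \eta\bigr)y^2$, so that $\det Q_3 = 1 - 2\eta$. For $Q_3$ to be totally positive semi-definite one would need $1 - 2\eta\succeq 0$, forcing $\tr_{K/\Q}(\eta)\le 1$; but a nonzero totally positive integer of a real quadratic field has trace at least $2$ (by the AM--GM inequality, since its norm is a positive rational integer, hence $\ge 1$). This contradiction completes the argument.

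The whole thing is essentially bookkeeping. The two points that need attention are: (i) that the two shapes really are exhaustive, which is exactly the Schur trace bound used in Proposition~\ref{prop:exi3}; and (ii) the sign of $\tfrac12\tr_{K/\Q}(\det Q_1)$, for which the only structural input is the elementary fact that $b^2 - b\ge 0$ for $b\in\Z$. I do not expect any genuine obstacle beyond keeping the $\Z[\sqrt D]$-arithmetic straight, since $\mathcal{O}_{\Q(\sqrt D)} = \Z[\sqrt D]$ for $D\equiv 2\pmod 4$, exactly as in the case $D\equiv 3\pmod 4$.
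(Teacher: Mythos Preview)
Your proof is correct and follows essentially the same route as the paper: the same two shapes for the decomposition, the same reduction to $\eta=\beta^2$ in the first shape, the same trace computation ruling out $Q_1$, and the same $\det Q_3=1-2\eta$ argument in the second shape. The only differences are that you add explicit justifications (the Minkowski inequality to force one determinant to vanish, the positive-semi-definite row argument to get $Q_4=\eta y^2$, and AM--GM for $\tr_{K/\Q}(\eta)\ge 2$) where the paper simply says ``as before'' and invokes the reasoning from Proposition~\ref{prop:exi3}.
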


\begin{proof}
As before, we consider two decompositions of $Q$. First, as $Q_1+Q_2$, where
\begin{align*}
Q_1(x,y)&=x^2+2(1+\sqrt{D}-\beta)xy+\left(\frac{D}{2}+1+\sqrt{D}-\beta^2\right)y^2\\
Q_2(x,y)&=x^2+2\beta xy+\beta^2 y^2,
\end{align*}
for some $\beta=b_1+b_2\sqrt{D}\in\Z[\sqrt{D}]$. Then
\[
\det(Q_1)=\frac{D}{2}+1+\sqrt{D}-\beta^2-(1+\sqrt{D}-\beta)^2=-\frac{D}{2}-\sqrt{D}-2\beta^2+2\beta+2\beta\sqrt{D},
\]
and thus,
\[
\frac{1}{2}\tr_{K/\Q}(\det(Q_1))=-\frac{D}{2}-2(b_1^2-b_1+(b_2^2-b_2)D),
\]
which is always negative.

On the other hand, $Q=Q_3+Q_4$, where
\begin{align*}
Q_3(x,y)&=2x^2+2(1+\sqrt{D})xy+\left(\frac{D}{2}+1+\sqrt{D}-\eta\right)y^2\\
Q_4(x,y)&=\eta y^2,
\end{align*}
for some $\eta\succ 0$. Then 
\[
\det(Q_3)=D+2+2\sqrt{D}-2\eta-(1+\sqrt{D})^2=1-2\eta.
\]
Similarly as before, we cannot have $1-2\eta\succeq 0$.
\end{proof}

Therefore, the situation for $D\equiv 2,3\pmod{4}$ is quite easy. However, that is not the case for $D\equiv 1\pmod{4}$.

\begin{prop} \label{prop:exi1}
Let $D>17$ be square-free. Then the following quadratic forms are additively indecomposable:
\begin{enumerate}
\item $Q(x,y)=3x^2+2(3+\sqrt{D})xy+\left(\frac{D+10}{3}+2\sqrt{D}\right)y^2$ with determinant $1$ for $D\equiv 5\pmod{12}$,
\item $Q(x,y)=3x^2+2(3+\sqrt{D})xy+\left(\frac{D+11}{3}+2\sqrt{D}\right)y^2$ with determinant $2$ for $D\equiv 1\pmod{12}$,
\item $Q(x,y)=4x^2+2(2+\sqrt{D})xy+\left(\frac{D+7}{4}+\sqrt{D}\right)y^2$ with determinant $3$ for $D\equiv 9\pmod{12}$. 
\end{enumerate}
\end{prop}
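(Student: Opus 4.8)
The plan is to mimic the structure of the proofs of Propositions~\ref{prop:exi3} and~\ref{prop:exi2}, handling the three cases of Proposition~\ref{prop:exi1} in parallel since the arithmetic is almost identical. In each case the leading diagonal coefficient of $Q$ is a small rational integer $a\in\{3,4\}$, so I first classify the ways $a$ can be written as a sum of two totally positive integers of $\Z[\omega_D]$: by the Schur--Siegel trace bound (as cited from \cite{schur}), the only totally positive integer with trace-to-degree ratio below $3/2$ is $1$, so for $a=3$ the only decomposition of the first diagonal entry is $1+1+1$ collapsing to $1+2$ with one summand a rational $1$, and for $a=4$ it is $1+1+1+1$, again forcing at least one rational-integer summand (indeed, since any totally positive integer of trace $2$ in a real quadratic field is rational unless $\mathrm{tr}$ forces $b_2=0$). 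The upshot is the same dichotomy as before: either $Q=Q_1+Q_2$ where $Q_1$ carries diagonal $(a-1)$ in the first slot and $Q_2$ has first diagonal entry $1$ (so that, since $\det Q$ is small, $\det Q_2=0$, whence $Q_2=(x+\beta y)^2$ for some $\beta=b_1+b_2\sqrt{D}\in\Z[\sqrt{D}]$ or $\Z[\omega_D]$), and one iterates this down; or $Q=Q_3+Q_4$ with $Q_4=\eta y^2$ a pure square term with $\eta\succ 0$.

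For the first branch, I would compute $\det(Q_1)$ after substituting the forced value of the $y^2$-coefficient of $Q_2$ (namely $\beta^2$, possibly after peeling off several rational squares), getting an expression of the shape $\text{(negative rational constant)} - 2\beta^2 + 2\beta\cdot(\text{something})+\dots$; then take $\frac12\tr_{K/\Q}$ (or the full trace) and observe that it is a negative quantity minus a sum of terms of the form $2(b_1^2-cb_1)+2(b_2^2-b_2)D$ (up to the precise constants dictated by $D\bmod 12$ and by whether $\omega_D=\sqrt D$ or $(1+\sqrt D)/2$), each of which is $\geq$ some bounded negative number; the hypothesis $D>17$ is exactly what makes the leading $-\frac{D}{c}$ term dominate and keeps the trace strictly negative for all integer choices of $b_1,b_2$. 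Hence $\det(Q_1)\not\succeq 0$, ruling out this branch. For the second branch, $\det(Q_3)$ telescopes to a small constant minus $2\eta$ (or $a\eta$ type expression), e.g. $1-2\eta$ or $2-\cdot\eta$, and since $\tr_{K/\Q}(\eta)\geq 2$ for any $\eta\in\OK^+$ we again get $\det(Q_3)\not\succeq 0$; one also has to check the intermediate decompositions of $a$ where two of the three (or four) rational summands are peeled off, but these reduce to the binary case already treated. Assembling: every putative decomposition of $Q$ produces a summand with non-totally-positive-semidefinite Gram matrix, so $Q$ is additively indecomposable, and a direct determinant computation confirms $\det(Q)$ equals $1$, $2$, $3$ respectively.

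The main obstacle I anticipate is bookkeeping rather than conceptual: because $D\equiv 1\pmod 4$ the ring of integers is $\Z[\frac{1+\sqrt D}{2}]$, so $\beta$ and $\eta$ should be written in the basis $\{1,\omega_D\}$ (not $\{1,\sqrt D\}$), and the coefficients $\frac{D+10}{3}$, $\frac{D+11}{3}$, $\frac{D+7}{4}$ are integers precisely under the stated congruences $D\equiv 5,1,9\pmod{12}$ — one must verify integrality of all forms involved and handle the cross term $2(3+\sqrt D)$ (resp. $2(2+\sqrt D)$) carefully when expanding $(\sqrt D-\beta)^2$. A secondary subtlety is that for $a=3,4$ there is genuinely more than one ``peeling'' pattern (which diagonal slot loses the unit, and whether one peels one, two, or three rational squares before hitting a determinant-zero condition); I would organise these as a short finite case check, noting that after removing $k$ rational squares one is left with a form of the same shape but smaller leading coefficient, so the estimates only get easier. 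I expect the sharpness of the bound $D>17$ to come out of exactly the worst-case choice $b_1\in\{0,1\}$, $b_2\in\{0,1\}$ in the trace estimate, matching the pattern $D=5,13,17$ being small exceptional fields handled separately elsewhere in the paper.
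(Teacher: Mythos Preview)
Your plan tracks Propositions~\ref{prop:exi3} and~\ref{prop:exi2} too closely, and the step that worked there does not transfer. The assertion ``since $\det Q$ is small, $\det Q_2=0$'' is the gap: Minkowski's inequality together with the fact (which you should prove directly rather than via Schur, since Schur only covers trace-to-degree ratio strictly below $3/2$) that totally positive integers $\preceq a$ in $\OK$ are rational for $D>17$ does force both $\det Q_i$ to be small rational integers and hence one of them to vanish, but it does not tell you \emph{which}. If the summand with leading coefficient $u_1\in\{2,3\}$ is the one of determinant zero, it has the shape $u_1x^2+2\gamma xy+(\gamma^2/u_1)y^2$ with a divisibility constraint $u_1\mid\gamma^2$, not a unit square $(x+\beta y)^2$, so there is nothing to ``iterate down''. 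Your case split also omits the decomposition $4=2+2$ in case~(3). More generally, the ``iterate'' and ``peel off several rational squares'' language suggests you are decomposing $Q$ into more than two pieces, but additive indecomposability is about \emph{binary} decompositions $Q=Q_1+Q_2$; there is no recursion.

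The paper's proof avoids the determinant-zero reduction altogether. Having established that both leading coefficients $u_i$ and both determinants $\det Q_i$ are rational integers, it writes the cross and $y^2$ coefficients as $\tfrac{b_1^{(i)}+b_2^{(i)}\sqrt D}{2}$ and $\tfrac{c_1^{(i)}+c_2^{(i)}\sqrt D}{2}$; vanishing of the $\sqrt D$-part of $\det Q_i$ gives the relation $b_1^{(i)}=u_i c_2^{(i)}/b_2^{(i)}$. Positive semi-definiteness bounds $|b_2^{(i)}|\le 2$, the summation constraint $b_2^{(1)}+b_2^{(2)}=2$ then forces $b_2^{(1)}=b_2^{(2)}=1$, and the parity condition $b_1^{(i)}\equiv b_2^{(i)}\pmod 2$ (integrality in $\Z[\omega_D]$) rules out every split of $u$ except $u_1=1,\ u_2=3$ when $D\equiv 9\pmod{12}$. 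That residual case is finished not by a trace estimate on a single determinant, but by showing that $\det Q_1\ge 0$ forces $\det Q_2\le\frac{9-D}{4}<0$. The decisive idea you are missing is to exploit the \emph{rationality} of both determinants simultaneously rather than setting one of them to zero.
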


\begin{proof}
First, let us note that over the considered number fields, the numbers $2,3$ and $4$ can be decomposed as a sum of totally positive integers only when the summands are positive rational integers. If there were another decomposition, it would have the form $\frac{a_1+b\sqrt{D}}{2}+\frac{a_2-b\sqrt{D}}{2}$ where $a_1,a_2,b\in\Z$, $b\neq 0$, satisfying conditions originating from the form of the corresponding integral basis. However, to get totally positive integers, we must have $|a_1|,|a_2|>|b|\sqrt{D}>|b|\sqrt{17}>4$. That implies
\[
\frac{a_1+b\sqrt{D}}{2}+\frac{a_2-b\sqrt{D}}{2}=\frac{a_1+a_2}{2}>4,
\]
which is a contradiction.

Denote by $u$ the coefficient before $x^2$ in one of the quadratic forms above, and by $H=\frac{D+10}{3}$, $\frac{D+11}{3}$ or $\frac{D+7}{4}$, depending on $D\equiv 5,1$ or $9 \pmod{12}$, respectively. Assume $Q$ is additively decomposable as $Q=Q_1+Q_2$, where
\begin{align*}
    Q_i(x,y)&=u_i x^2+2\left(\frac{b_1^{(i)}}{2}+\frac{b_2^{(i)}}{2}\sqrt{D}\right)xy+\left(\frac{c_1^{(i)}}{2}+\frac{c_2^{(i)}}{2}\sqrt{D}\right)y^2,
\end{align*} 
for $i=1,2$ and all the coefficients $b_{j}^{(i)},c_{j}^{(i)}\in\Z$ satisfy the corresponding congruence relations. Moreover, at least one of $b_2^{(1)}$ and $b_2^{(2)}$, let us say $b_2^{(1)}$, is non-zero. And, we must have 
\[
\frac{(b_1^{(1)})^2}{4}+\frac{(b_2^{(1)})^2}{4}D\leq u_1\frac{c_1^{(1)}}{2}\leq uH.
\] 
It can be easily checked that this inequality gives $b_2^{(1)}\in\{\pm1,\pm2\}$ for all cases $u$ and $H$, and $D>17$. By the same argument, $b_2^{(2)}\in\{0,\pm1,\pm2\}$.
Since the determinant of $Q$ is $1,2$ or $3$, we have $\det(Q_1),\det(Q_2)\in\Z$. Moreover, similarly as before, we cannot have $u_1=0$ or $u_2=0$.

Then,
\[
\det(Q_1)=\frac{u_1c_1^{(1)}}{2}+\frac{u_1c_2^{(1)}}{2}\sqrt{D}-\frac{(b_1^{(1)})^2}{4}-\frac{(b_2^{(1)})^2}{4}D-\frac{b_1^{(1)}b_2^{(1)}}{2}\sqrt{D}.
\] 
Therefore, $\frac{u_1c_2^{(1)}}{2}=\frac{b_1^{(1)}b_2^{(1)}}{2}$, which gives $b_1^{(1)}=\frac{u_1c_2^{(1)}}{b_2^{(1)}}$, and we get
\[
\det(Q_1)=\frac{u_1c_1^{(1)}}{2}-\frac{u_1^2(c_2^{(1)})^2}{4(b_2^{(1)})^2}-\frac{(b_2^{(1)})^2}{4}D.
\]
If $(b_2^{(1)})^2=4$, we obtain
\[
\det(Q_1)=\frac{u_1c_1^{(1)}}{2}-\frac{u_1^2(c_2^{(1)})^2}{16}-D\leq (u-1)H-\frac{u_1^2(c_2^{(1)})^2}{16}-D<0
\]
for all three cases and $D>21$. For $D=21$, it can be checked computationally that $Q$ is additively indecomposable (see also Subsection~\ref{subsec:21}). That implies $b_2^{(1)}\in\{\pm1\}$, and, since $b_2^{(2)}\in\{0,\pm1,\pm2\}$ and $b_2^{(1)}+b_2^{(2)}=2$, we must have $b_2^{(1)}=b_2^{(2)}=1$. 

Thus, $b_1^{(1)}=u_1c_2^{(1)}$, and, in the same manner, $b_1^{(2)}=u_2c_2^{(2)}$. Since $b_2^{(1)},b_2^{(2)}\equiv 1\pmod{2}$, we obtain $b_1^{(1)},b_1^{(2)}\equiv 1\pmod{2}$, which can be true only if $D\equiv 9\pmod{12}$, $u=4$, $H=\frac{D+7}{4}$ and $u_1,u_2\in\{1,3\}$. Without loss of generality, we can assume $u_1=1$ and $u_2=3$, in which case $b_1^{(1)}=c_2^{(1)}$ and $b_1^{(2)}=3c_2^{(2)}$. To get $Q$ as the sum of $Q_1$ and $Q_2$, we must have $2=\frac{b_1^{(1)}}{2}+\frac{b_1^{(2)}}{2}=\frac{c_2^{(1)}}{2}+\frac{3c_2^{(2)}}{2}$ and $\frac{c_2^{(1)}}{2}+\frac{c_2^{(2)}}{2}=1$, which implies $c_2^{(1)}=c_2^{(2)}=1$. Then 
\[
\det(Q_1)=\frac{c_1^{(1)}}{2}-\frac{1}{4}-\frac{D}{4},
\] 
which is non-negative if $\frac{c_1^{(1)}}{2}\geq \frac{D+1}{4}$. However, in that case, $\frac{c_1^{(2)}}{2}=H-\frac{c_1^{(1)}}{2}\leq  \frac{3}{2}$ and
\[
\det(Q_2)=\frac{3c_1^{(1)}}{2}-\frac{9}{4}-\frac{D}{4}\leq \frac{9-D}{4}<0
\]
for $D>17$, leading to a contradiction.
\end{proof}

\begin{theorem}
    Let $D>1$ be a square-free integer. Then there exists a classical, additively indecomposable binary quadratic form in $\Q(\sqrt{D})$ with a non-zero determinant.
\end{theorem}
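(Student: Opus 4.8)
The plan is to assemble the statement from the three preceding propositions together with a short verification in the finitely many remaining fields. Proposition~\ref{prop:exi3} handles every $D\equiv 3\pmod 4$, and Proposition~\ref{prop:exi2} handles every $D\equiv 2\pmod 4$, in each case exhibiting an explicit classical binary form of determinant $1$. Proposition~\ref{prop:exi1} handles every square-free $D>17$ with $D\equiv 1\pmod 4$, the witness being organised by the residue of $D$ modulo $12$ and having determinant $1$, $2$, or $3$. A square-free $D>1$ cannot be $\equiv 0\pmod 4$, so the only cases not yet covered are $D\in\{5,13,17\}$, and it remains to produce a classical, additively indecomposable binary form of nonzero determinant over each of $\mathcal{O}_{\Q(\sqrt{5})}$, $\mathcal{O}_{\Q(\sqrt{13})}$, and $\mathcal{O}_{\Q(\sqrt{17})}$.

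For these three fields I would exhibit an explicit classical binary form of small leading coefficient and check additive indecomposability directly, in the style of Propositions~\ref{prop:exi3}--\ref{prop:exi1}: a decomposition $Q=Q_1+Q_2$ forces a decomposition of the leading diagonal entry into totally positive integers, and for a small entry (at most $4$, say) only rational splittings occur, by Schur's lower bound on the normalised trace~\cite{schur}; one then runs through the finitely many resulting shapes of $Q_1,Q_2$ and shows in each that some $\det(Q_i)$ is not totally nonnegative, typically by taking the trace of that determinant. For $D=5$ the form $2x^2+2xy+(3+\sqrt{5})y^2$, already noted above to be additively indecomposable among classical forms, has determinant $5+2\sqrt{5}\ne 0$ and does the job; for $D=13$ and $D=17$ I expect analogous forms with leading coefficient $2$ or $3$ to work, and failing a clean choice the check may simply be performed by a finite computation, which is unobjectionable for two fixed fields.

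The mathematical content is entirely contained in Propositions~\ref{prop:exi3}, \ref{prop:exi2}, and~\ref{prop:exi1}; what remains is bookkeeping plus the three base cases. The only (minor) obstacle I anticipate is selecting witness forms for $D=13$ and $D=17$ whose indecomposability admits a short argument rather than a brute-force pass over all decompositions. Once those are pinned down, the theorem follows by combining the cases, and it supplies the classical half of Theorem~\ref{thm:exist}.
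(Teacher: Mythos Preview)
Your proposal is correct and follows exactly the paper's approach: reduce to Propositions~\ref{prop:exi3}, \ref{prop:exi2}, and~\ref{prop:exi1}, then handle the three leftover cases $D\in\{5,13,17\}$ by exhibiting explicit witnesses and verifying their indecomposability directly. The paper uses the same form $2x^2+2xy+(3+\sqrt{5})y^2$ for $D=5$, and for $D=13$ and $D=17$ takes $2x^2+(1+\sqrt{13})xy+3y^2$ and $\bigl(\tfrac{5+\sqrt{17}}{2}\bigr)x^2+2xy+\bigl(\tfrac{5-\sqrt{17}}{2}\bigr)y^2$ respectively, with the indecomposability checked computationally rather than by a hand argument.
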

\begin{proof}
Almost all cases have been resolved in Propositions~\ref{prop:exi3},~\ref{prop:exi2} and~\ref{prop:exi1}. It remains to discuss $D=5,13,17$. For $D=5$, the quadratic form $Q(x,y)=2x^2+2xy+(3+\sqrt{5})y^2$ with $\det(Q)=5+2\sqrt{5}$ is additively indecomposable (see also Subsection~\ref{subsec:deter5}). For $D=13$, the same is true for, e.g. $Q(x,y)=2x^2+2\left(\frac{1}{2}+\frac{1}{2}\sqrt{13}\right)xy+3y^2$ with $\det(Q)=\frac{5}{2}-\frac{1}{2}\sqrt{13}$, and for $D=17$, we can consider $Q(x,y)=\left(\frac{5}{2}+\frac{1}{2}\sqrt{17}\right)x^2+2xy+\left(\frac{5}{2}-\frac{1}{2}\sqrt{17}\right)y^2$ with $\det(Q)=1$. This can be computationally verified.   
\end{proof}

As we have seen, for most cases, there is a classical, additively indecomposable quadratic form $Q$ with $\det(Q)=1$. However, that is not true in general. For example, for $D=5$, all additively indecomposable quadratic forms have a determinant of the form $\varepsilon^2(5+2\sqrt{5})$, where $\varepsilon$ is a unit and $\Nm_{\Q(\sqrt{5})/\Q}(5+2\sqrt{5})=5$ (see Subsection~\ref{subsec:deter5}). There exists a form with a non-square unit determinant. For example, when $D=21$.

On the other hand, the situation for non-classical forms is much simpler.

\begin{theorem}\label{thm:exist_nc}
Let $D>1$ be a square-free positive integer. Then $x^2+xy+y^2$ of determinant $\frac{3}{4}$ is (non-classical) additively indecomposable over $\mathcal{O}_{\Q(\sqrt{D})}$.    
\end{theorem}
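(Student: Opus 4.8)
\emph{Plan.} The idea is to argue directly from the Gram matrices, using that the two diagonal entries of $Q=x^2+xy+y^2$ equal the totally positive unit $1$, which is indecomposable in $\mathcal{O}_K$. First note that since the coefficients of $Q$ are rational, its Gram matrix $\left(\begin{smallmatrix}1&1/2\\1/2&1\end{smallmatrix}\right)$ is positive definite under every embedding (eigenvalues $1/2$ and $3/2$), so $Q$ is totally positive definite with $\det(Q)=\tfrac34$, and it is non-classical because $\tfrac12\notin\mathcal{O}_{\Q(\sqrt{D})}$. Now suppose, towards a contradiction, that $Q=Q_1+Q_2$ is a nontrivial decomposition into totally positive semi-definite non-classical binary forms over $\mathcal{O}_K$ (so $Q_1,Q_2\ne 0$); write $Q_i=a_ix^2+b_ixy+c_iy^2$ with $a_i,b_i,c_i\in\mathcal{O}_K$.

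\emph{Diagonal constraints.} Every diagonal entry of a totally positive semi-definite Gram matrix is totally non-negative, so $a_1,a_2,c_1,c_2\succeq 0$. From $a_1+a_2=1$ and the indecomposability of $1$ in $\mathcal{O}_K$ (it is a totally positive unit; alternatively $0\preceq a_1\preceq 1$ forces $\Nm_{K/\Q}(a_1)\in\{0,1\}$, hence $a_1\in\{0,1\}$) we get $a_1a_2=0$, and after possibly interchanging $Q_1$ and $Q_2$ we may assume $a_2=0$ and $a_1=1$. Likewise $c_1c_2=0$.

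\emph{Conclusion.} Since $a_2=0$ and the Gram matrix $\left(\begin{smallmatrix}0&b_2/2\\b_2/2&c_2\end{smallmatrix}\right)$ of $Q_2$ is totally positive semi-definite, its determinant $-b_2^2/4$ must be totally non-negative, so $b_2=0$; thus $Q_2=c_2y^2$, and $Q_2\ne 0$ forces $c_2\ne 0$, whence $c_1=0$ (from $c_1c_2=0$) and $c_2=1$. But then $Q_1=Q-Q_2=x^2+xy$, whose Gram matrix $\left(\begin{smallmatrix}1&1/2\\1/2&0\end{smallmatrix}\right)$ has determinant $-\tfrac14<0$ and is therefore indefinite under every embedding, contradicting that $Q_1$ is totally positive semi-definite. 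Hence no such decomposition exists, and $Q$ is (non-classical) additively indecomposable over $\mathcal{O}_{\Q(\sqrt{D})}$.

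I do not anticipate a real obstacle here: the argument is elementary and in fact works verbatim over any totally real number field, not only real quadratic ones. The only delicate points are translating total positive semi-definiteness into constraints on the coefficients — in particular that a vanishing diagonal entry forces the off-diagonal entry to vanish as well — and restricting attention to nontrivial decompositions ($Q_1,Q_2\ne 0$), in line with the convention already used in the proofs of Propositions~\ref{prop:exi3}--\ref{prop:exi1}.
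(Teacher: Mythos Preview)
Your proof is correct and follows the same approach as the paper: the paper's entire proof is the single sentence ``the statement follows from the fact that $1$ cannot be decomposed as the sum of two totally positive integers in $\OK$,'' and you have simply fleshed out the easy case analysis this entails. (One minor quibble: in your parenthetical alternative, $\Nm_{K/\Q}(a_1)=1$ alone does not give $a_1=1$; you also need $a_1\preceq 1$ to conclude all conjugates equal $1$ --- but your primary justification via indecomposability of $1$ is fine.)
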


\begin{proof}
The statement follows from the fact that $1$ cannot be decomposed as the sum of two totally positive integers in $\OK$.
\end{proof}

The above two theorems complete the proof of Theorem~\ref{thm:exist_c}. Here are some further examples of non-classical, additively indecomposable forms:

\begin{prop}
Let $D>1$ be a square-free positive integer. Then the following forms are (non-classical) additively indecomposable over $\mathcal{O}_{\Q(\sqrt{D})}$:
\begin{enumerate}
\item $x^2+\sqrt{D}xy+\frac{D+2}{4}y^2$ with determinant $\frac{1}{2}$ for $D\equiv 2\pmod{4}$,
\item $x^2+\sqrt{D}xy+\frac{D+1}{4}y^2$ with determinant $\frac{1}{4}$ for $D\equiv 3\pmod{4}$,
\item $x^2+\sqrt{D}xy+\frac{D+3}{4}y^2$ with determinant $\frac{3}{4}$ for $D\equiv 1\pmod{4}$.
\end{enumerate}
\end{prop}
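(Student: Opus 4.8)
The plan is to show that each of the three forms is totally positive definite and that any additive decomposition into two non-classical totally positive semi-definite forms is impossible, using the key arithmetic fact (as in Theorem~\ref{thm:exist_nc}) that $1$ is indecomposable in $\OK$, i.e. $1$ cannot be written as a sum of two totally positive algebraic integers.

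First I would check total positive definiteness: for case (1), $\det = \frac{D+2}{4} - \frac{D}{4} = \frac12 \succ 0$, and since the leading coefficient $1$ is totally positive, the Gram matrix is totally positive definite; similarly $\det = \frac14$ in case (2) and $\det = \frac34$ in case (3), and in each case the stated value of $y^2$ is a legitimate element of $\OK$ precisely because of the congruence condition on $D$ (e.g. for $D\equiv 2\pmod 4$, $\frac{D+2}{4}$ has half-integer shape compatible with $\OK = \Z[\sqrt D]$; more carefully, $\frac{D+2}{4}y^2$ is half of $\frac{D+2}{2}y^2$, and the whole form is integral but non-classical since the off-diagonal entry of $M_Q$ is $\frac{\sqrt D}{2}\notin\OK$). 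Then I would suppose $Q = Q_1 + Q_2$ with $Q_i$ non-classical totally positive semi-definite. Writing $Q_i = a_i x^2 + b_i xy + c_i y^2$ with $a_i, b_i, c_i \in \OK$, total positivity of each $Q_i$ forces $a_i \succeq 0$, $c_i \succeq 0$; since $a_1 + a_2 = 1$ and $1$ is indecomposable, one of $a_1, a_2$ is $0$, say $a_2 = 0$. But total positive semi-definiteness of $Q_2 = b_2 xy + c_2 y^2$ with $a_2 = 0$ forces $b_2 = 0$ (otherwise setting $y$ and choosing $x$ appropriately makes $Q_2$ take a value of either sign under some embedding), so $Q_2 = c_2 y^2$. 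Then $Q_1 = x^2 + \sqrt D\, xy + (c - c_2) y^2$ where $c$ is the original $y^2$-coefficient; its determinant is $(c - c_2) - \frac{D}{4}$, and $\det(Q) = c - \frac D4$ is $\frac12$, $\frac14$, or $\frac34$ respectively, so $\det(Q_1) = \det(Q) - c_2$ must be $\succeq 0$, forcing $c_2 \preceq \det(Q) \prec 1$; but $c_2 \in \OK^+$ unless $c_2 = 0$, and no totally positive integer is $\prec 1$ in all embeddings while having norm less than $1$ — more precisely, $c_2 \succeq 0$ with $c_2 \neq 0$ and $\det(Q_1)\succeq 0$ would require $c_2$ totally positive with $\sigma_i(c_2) < 1$ for all $i$, impossible since then $\Nm(c_2) < 1$ contradicts $c_2$ being a nonzero algebraic integer. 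Hence $c_2 = 0$, $Q_2 = 0$, and the decomposition is trivial.

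The main obstacle is the bookkeeping in the step $a_2 = 0 \Rightarrow b_2 = 0$: one must verify that a binary form with zero leading coefficient that is totally positive semi-definite must also have zero cross term, which is a one-line argument per embedding (the $2\times 2$ matrix $\begin{pmatrix} 0 & b_2/2 \\ b_2/2 & c_2 \end{pmatrix}$ is positive semi-definite over $\R$ iff $b_2 = 0$), but it has to be applied under every real embedding simultaneously. The other delicate point is confirming the exact determinant values and that the coefficients genuinely lie in $\OK$ under the stated congruences — routine but needs the case split on $D \bmod 4$. Once these are in place, the indecomposability of $1$ does all the real work, exactly as in Theorem~\ref{thm:exist_nc}, and the three cases differ only in the value of $\det(Q)$, which is $\frac12, \frac14, \frac34$ — all strictly less than $1$ in every embedding (being rationals in $(0,1)$), so the final contradiction $c_2 \prec 1$, $c_2 \in \OK^+ \cup \{0\}$, $c_2 \neq 0$ goes through uniformly.
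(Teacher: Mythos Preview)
Your proof is correct and follows essentially the same argument as the paper: use the indecomposability of $1$ to force any putative summand to have the shape $\eta y^2$, then observe that $\det(Q_1)=\det(Q)-\eta=\frac{i}{4}-\eta$ (with $i\in\{1,2,3\}$) cannot be $\succeq 0$ for any $\eta\in\OK^{+}$. Your write-up is simply more explicit about the intermediate steps (the $a_2=0\Rightarrow b_2=0$ lemma and the norm argument ruling out $0\prec\eta\prec 1$); one small slip is that $\frac{D+2}{4},\frac{D+1}{4},\frac{D+3}{4}$ are in fact rational integers under the stated congruences, not half-integers, but this does not affect the argument.
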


\begin{proof}
Since $1$ cannot be decomposed as a sum of totally positive integers in $\mathcal{O}_{\Q(\sqrt{D})}$, the only possible decomposition is of the form $x^2+\sqrt{D}xy+\left(\frac{D+i}{4}-\eta\right)y^2+\eta y^2$ where $i\in\{1,2,3\}$. However, the determinant of the first summand is $\frac{D+i}{4}-\eta-\frac{D}{4}=\frac{i}{4}-\eta$, which is not totally positive or zero for any $\eta\in\OK^{+}$.
\end{proof}

\subsection{Lower bound}\label{sec:lower}

We have seen in the previous subsection that there exists at least one additively indecomposable quadratic form in each quadratic field. However, typically many more such forms appear over quadratic fields. To demonstrate it, we will prove the following.

\begin{theorem}[=Theorem~\ref{thm:almost_all_intro}]\label{thm:almost_all}
    Let $\varepsilon>0$. For almost all square-free $D>0$, the number of additively indecomposable, non-equivalent binary quadratic forms over $\mathcal{O}_{\Q(\sqrt{D})}$ is at least $\frac{1}{2}D^{\frac{1}{12}-\varepsilon}$.
\end{theorem}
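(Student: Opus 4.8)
The plan is to produce, for a positive proportion (indeed almost all) square-free $D>0$, many non-equivalent additively indecomposable binary forms over $\mathcal{O}_{\Q(\sqrt{D})}$, by exhibiting a large family and then controlling how many of them can be equivalent. The natural source of such forms is the one already exploited in Subsection~\ref{sec:exists}: given an indecomposable integer $\alpha\in\mathcal{O}_K^+$ lying on a codifferent line, i.e. $\tr_{K/\Q}(\delta\alpha)=1$ for some totally positive $\delta\in\mathcal{O}_K^\vee$, a binary form with $\alpha$ on the diagonal and small determinant will often be additively indecomposable, since any decomposition would have to split $\alpha$, contradicting indecomposability, or split off a $y^2$-term, which forces the leftover determinant to be non-positive. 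So the first step is: starting from an explicit list of indecomposable integers coming from the continued-fraction description of $-\omega_D'$ (Peters–Dani–Stehlíková), build one binary additively indecomposable form $Q_i$ for each indecomposable $\alpha_i$ (or for a large subfamily of them), of controlled, small determinant norm.

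\textbf{Counting indecomposables and bounding equivalences.} The second step is quantitative. Since $D^{1/12-\varepsilon}$ is the target, and universality density results over real quadratic fields (cited as \cite{KYZ}, used as Theorem~\ref{thm:almost_all} is attributed to it) give, for almost all square-free $D$, at least on the order of $D^{1/6-\varepsilon}$ indecomposable integers (more precisely a lower bound on the number of elements in a codifferent-line set $U(\delta)$, or equivalently the period length / partial-quotient sum of the continued fraction of $\sqrt D$), I would invoke that input to say: for almost all $D$ there are $\gg D^{1/6-\varepsilon}$ non-associate indecomposable integers. Then I need to pass from "many indecomposables" to "many non-equivalent forms". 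Two binary forms are equivalent iff there is an invertible $\mathcal{O}_K$-matrix of unit determinant conjugating their Gram matrices; in particular equivalent totally positive definite forms have the same determinant up to squares of units and represent the same set of integers. The key observation is that the form $Q_i$ built from $\alpha_i$ represents $\alpha_i$, and more importantly $\alpha_i$ is (essentially) its minimum; if $Q_i\sim Q_j$ then $Q_j$ also represents $\alpha_i$ at its minimum, which pins $\alpha_j$ to $\alpha_i$ up to a bounded ambiguity (the automorphism group of a binary form is finite modulo units, and the number of minimal vectors is $O(1)$). Hence each equivalence class contains only $O(1)$ of our forms, and dividing the count of indecomposables by this bounded multiplicity — and by a factor $2$ to be safe, which is exactly the $\tfrac12$ in the statement — yields $\gg \tfrac12 D^{1/6-\varepsilon}$, more than enough for the claimed $\tfrac12 D^{1/12-\varepsilon}$; the weaker exponent gives comfortable slack to absorb all the $O(1)$ losses and the case distinctions mod $4$ and mod $12$.

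\textbf{Execution order.} Concretely I would proceed: (i) recall the continued fraction / codifferent description and fix, for each indecomposable $\alpha$ with $\tr_{K/\Q}(\delta\alpha)=1$, a candidate binary form $Q_\alpha = \alpha x^2 + \beta xy + c y^2$ with $\beta,c$ chosen (depending on $D\bmod 4$, mimicking Propositions~\ref{prop:exi3}–\ref{prop:exi1}) so that $\det Q_\alpha$ is small and totally positive; (ii) prove $Q_\alpha$ is additively indecomposable by the same trace-of-determinant argument as in those propositions — any decomposition either decomposes $\alpha$ (impossible) or yields a summand of non-positive determinant; (iii) show that the map $\alpha\mapsto[Q_\alpha]$ to equivalence classes is finite-to-one with a bound independent of $D$, using that $\alpha$ is the minimum of $Q_\alpha$ and the number of minimal vectors is bounded; (iv) invoke \cite{KYZ} to get $\gg_\varepsilon D^{1/6-\varepsilon}$ non-associate indecomposables for almost all square-free $D$, and conclude. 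I expect step (iii) — controlling equivalences, i.e. ruling out that a huge collinear family of indecomposables all give equivalent forms — to be the main obstacle: one must argue carefully that $\alpha$ really is (up to the finite, $D$-independent ambiguity) a well-defined invariant of the class $[Q_\alpha]$, perhaps by comparing minima and using that two binary forms of the same determinant representing a common near-minimal value are linked by a bounded chain of reductions. A secondary technical point is making the "almost all $D$" input from \cite{KYZ} match the exponent; taking $1/12$ rather than $1/6$ deliberately leaves room, so this should cause no real trouble.
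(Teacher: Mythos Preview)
Your overall architecture is right --- build one additively indecomposable binary form per indecomposable integer, control equivalences, and plug in the density input from \cite{KYZ} --- but you are working much harder than the paper does, and precisely at the step you flag as the main obstacle the paper takes a shortcut that removes the difficulty entirely.

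The paper's two key simplifications are these. First, instead of $Q_\alpha=\alpha x^2+\beta xy+cy^2$ with a generic $c$, take $Q_\alpha=\alpha x^2+2xy+\alpha' y^2$, i.e.\ put the \emph{conjugate} of $\alpha$ on the other diagonal entry. Then \emph{both} diagonal coefficients are indecomposable and the middle coefficient is non-zero, so additive indecomposability is immediate from Proposition~\ref{prop:inde_from_inde}; no trace-of-determinant computation is needed. Second, $\det Q_\alpha=\Nm_{K/\Q}(\alpha)-1$ is a rational integer determined by the norm of $\alpha$. Citing \cite{JK}, one has at least $\tfrac{u_{2i+1}-1}{2}$ indecomposables of pairwise \emph{distinct norms} (coming from semiconvergents), hence the corresponding forms have pairwise distinct determinants and are therefore automatically non-equivalent. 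Your step~(iii) --- bounding fibres via minima and automorphism groups --- never arises.

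One more correction: the exponent $\tfrac{1}{12}$ is not slack left over from a $\tfrac{1}{6}$ input. The paper applies \cite[Corollary~2.12]{KYZ} with $B=D^{1/12-\varepsilon}$ directly to the maximal odd-indexed partial quotient $u=\max_i u_{2i+1}$; for almost all square-free $D$ one has $u\geq D^{1/12-\varepsilon}$, which together with the $\tfrac{u-1}{2}$ count above yields the stated $\tfrac12 D^{1/12-\varepsilon}$. So there is no extra factor of two in the exponent to absorb losses --- but, as just explained, there are no losses to absorb.
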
 

\begin{remark}
By almost all, we mean that the natural density of such $D>0$ among all square-free $D>0$ is $1$. 
\end{remark}

Before we can prove the above theorem, we need the following two results.

\begin{prop} \label{prop:inde_from_inde}
Let $K$ be a totally real number field, and let $Q(x,y)=\alpha x^2+\beta xy+\eta y^2 \in \OK[x,y]$ be totally positive definite. If $\alpha$ and $\eta$ are indecomposable integers in $\OK$ and $\beta\neq 0$, then $Q$ is additively indecomposable.
\end{prop}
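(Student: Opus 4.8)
The plan is to analyze an arbitrary decomposition $Q = Q_1 + Q_2$ into totally positive semi-definite forms (classical or non-classical alike, since the argument is the same) and show one summand must be $0$. Write
\[
Q_i(x,y) = \alpha_i x^2 + \beta_i xy + \eta_i y^2,
\]
so $\alpha_1 + \alpha_2 = \alpha$, $\beta_1 + \beta_2 = \beta$, $\eta_1 + \eta_2 = \eta$, with each $Q_i$ totally positive semi-definite, hence $\alpha_i \succeq 0$, $\eta_i \succeq 0$, and the discriminant condition $\sigma_j(\beta_i)^2 \le 4\sigma_j(\alpha_i)\sigma_j(\eta_i)$ for every embedding $\sigma_j$ (in the classical case $\beta_i \in \OK$; in general $\beta_i \in \OK$ as well by the definition of $\OK[x,y]$). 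Since $\alpha = \alpha_1 + \alpha_2$ with $\alpha_i \succeq 0$ and $\alpha$ indecomposable, either $\alpha_1 = 0$ or $\alpha_2 = 0$; likewise for $\eta$. So up to swapping $Q_1$ and $Q_2$ there are essentially two cases.

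First, suppose the same summand, say $Q_2$, has $\alpha_2 = 0$ and $\eta_2 = 0$. Then total positive semi-definiteness of $Q_2$ forces $\sigma_j(\beta_2)^2 \le 0$ for all $j$, hence $\beta_2 = 0$, so $Q_2 = 0$ and we are done. Second, suppose $\alpha_2 = 0$ but $\eta_1 = 0$ (the mixed case). Then semi-definiteness of $Q_2 = \beta_2 xy + \eta_2 y^2$ again forces $\beta_2 = 0$ (evaluating the $xy$ term's discriminant against $\alpha_2 = 0$), and semi-definiteness of $Q_1 = \alpha_1 x^2 + \beta_1 xy$ forces $\beta_1 = 0$ in the same way; but then $\beta = \beta_1 + \beta_2 = 0$, contradicting the hypothesis $\beta \ne 0$. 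The remaining case $\alpha_1 = 0$ and $\eta_2 = 0$ is symmetric.

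I would organize this as: (1) reduce to $\alpha_i \succeq 0$, $\eta_i \succeq 0$, and the per-embedding discriminant inequality; (2) use indecomposability of $\alpha$ and of $\eta$ to split into the two structural cases above; (3) in each case, observe that a totally positive semi-definite binary form in which the coefficient of $x^2$ or of $y^2$ vanishes identically must have vanishing cross term (since $0 \cdot \text{(something)} \ge \tfrac14\sigma_j(\beta)^2$ forces $\beta = 0$), and conclude. The only genuinely delicate point is the mixed case, where one must carefully chase which coefficients are forced to zero and verify that this contradicts $\beta \ne 0$ rather than merely forcing one summand to be zero — but this is a short deduction, not a computation. No obstacle of substance arises; the statement is essentially a packaging of the elementary fact that a rank-$\le 1$ totally positive semi-definite binary form has a "square" shape, combined with indecomposability applied to both diagonal entries.
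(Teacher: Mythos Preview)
Your argument is correct and is precisely the straightforward verification the authors have in mind; the paper does not spell out a proof at all, only remarking that it ``is straightforward.'' Your case split via indecomposability of the diagonal entries and the discriminant inequality forcing the cross term to vanish is the natural route, and your handling of the mixed case (where $\beta\neq 0$ is genuinely needed) is exactly right.
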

The proof of this proposition is straightforward. As we will see, in quadratic fields, we can use this result to find a lower bound on the number of non-equivalent, additively indecomposable binary quadratic forms. If we can take an indecomposable $\alpha\in\OK^{+}$ and let $\eta=\alpha'$ where $\alpha'$ is the conjugate of $\alpha$. Then it follows that $\det(Q)=\alpha\alpha'-\frac{\beta^2}{4}=\Nm_{K/\Q}(\alpha)-\frac{\beta^2}{4}$. Therefore, we obtain additively indecomposable quadratic forms for all suitable $\beta\in\Z$, that is, those that satisfy $\frac{\beta^2}{4} < \Nm_{K/\Q}(\alpha)$. We summarise it as:
\begin{lemma}\label{lem:binary}
Let $\alpha\in \OK$ be an indecomposable integer in a real quadratic field $K$. Then the quadratic form $Q(x,y)=\alpha x^2+bxy+\alpha' y^2$ is additively indecomposable as a non-classical quadratic form for all non-zero rational integers $b<2\sqrt{\mathrm{N}_{K/\Q}(\alpha)}$.
\end{lemma}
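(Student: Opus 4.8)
The plan is to deduce the lemma directly from Proposition~\ref{prop:inde_from_inde}. Writing $Q(x,y)=\alpha x^2+bxy+\alpha' y^2$, that proposition (applied with $\eta=\alpha'$ and $\beta=b$) yields additive indecomposability as soon as three things hold: $\alpha$ and $\alpha'$ are indecomposable integers, $b\neq 0$, and $Q$ is totally positive definite. The middle condition is given, so the only work is to establish the other two, and I expect neither to be a real obstacle.

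First I would check that the conjugate $\alpha'$ of an indecomposable integer $\alpha$ is again indecomposable. In a real quadratic field the nontrivial automorphism $\sigma$ carries $\alpha$ to $\alpha'$, preserves $\OK$, and merely interchanges the two real embeddings, hence preserves the relation $\succeq 0$; so a putative decomposition $\alpha'=\gamma_1+\gamma_2$ with $\gamma_i\succeq 0$ in $\OK$ maps to $\alpha=\sigma(\gamma_1)+\sigma(\gamma_2)$ with $\sigma(\gamma_i)\succeq 0$, and indecomposability of $\alpha$ forces some $\sigma(\gamma_i)=0$, hence some $\gamma_i=0$. In particular $\alpha'$ is totally positive. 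Next I would verify total positive definiteness: the Gram matrix of $Q$ is $\bigl(\begin{smallmatrix}\alpha & b/2\\ b/2 & \alpha'\end{smallmatrix}\bigr)$, and under any embedding its $(1,1)$-entry is positive while its determinant is $\alpha\alpha'-b^2/4=\mathrm{N}_{K/\Q}(\alpha)-b^2/4\in\Q$, independent of the embedding. Since replacing $y$ by $-y$ gives an equivalent form we may assume $b>0$, and then the hypothesis $b<2\sqrt{\mathrm{N}_{K/\Q}(\alpha)}$ is exactly $\mathrm{N}_{K/\Q}(\alpha)-b^2/4>0$; Sylvester's criterion then makes every embedded Gram matrix positive definite, i.e.\ $Q$ is totally positive definite. (The form $Q$ is literally non-classical precisely when $b$ is odd, but ``additively indecomposable as a non-classical form'' is well posed in all cases and is exactly what Proposition~\ref{prop:inde_from_inde} delivers.)

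These checks in hand, the lemma follows by citing Proposition~\ref{prop:inde_from_inde}; for self-containedness I would also recall its short proof. In a proper decomposition $Q=Q_1+Q_2$ into totally positive semi-definite binary forms $Q_i=\alpha_ix^2+\beta_ixy+\eta_iy^2$ over $\OK$, evaluating $Q_i(1,0)=\alpha_i$ gives $\alpha_i\succeq 0$, so indecomposability of $\alpha=\alpha_1+\alpha_2$ forces some $\alpha_i=0$, say $\alpha_2=0$; a binary semi-definite form with vanishing $x^2$-coefficient must also have vanishing $xy$-coefficient (its embedded determinant is then $-\beta_2^2/4$, which would be negative otherwise), so $Q_2=\eta_2y^2$. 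Now $\eta=\eta_1+\eta_2$ with $\eta_i\succeq 0$, and indecomposability of $\eta=\alpha'$ forces $\eta_1=0$ or $\eta_2=0$: the former would make $Q_1=\alpha x^2+bxy$ non-semi-definite since $b\neq 0$, while the latter makes $Q_2=0$, so the decomposition is trivial. Hence no proper decomposition exists. The only points needing a little care are the Galois-invariance observation above and recording the definiteness constraint in its correct two-sided form $|b|<2\sqrt{\mathrm{N}_{K/\Q}(\alpha)}$; I anticipate no genuine difficulty beyond that.
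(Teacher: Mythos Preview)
Your proposal is correct and follows exactly the paper's approach: the lemma is deduced from Proposition~\ref{prop:inde_from_inde} by taking $\eta=\alpha'$ and observing that $\det(Q)=\Nm_{K/\Q}(\alpha)-b^2/4$, so the total positive definiteness condition is precisely $b^2/4<\Nm_{K/\Q}(\alpha)$. Your write-up is in fact more detailed than the paper's (which leaves the indecomposability of $\alpha'$ and the Sylvester check implicit), and your self-contained reproof of the proposition is accurate.
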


\begin{proof}[Proof of Theorem~\ref{thm:almost_all}]
Let $[u_0,u_1,\ldots]$ be the continued fraction of $-\omega_D'$. On the one hand, there exist at least $\dfrac{u_{2i+1}-1}{2}$ indecomposable integers of different norms. This follows from the description of indecomposable integers in terms of semiconvergents
and their corresponding norms (see \cite[Proof of Theorem~5]{JK}). Thus, for each such indecomposable non-unit integer $\alpha$ we can construct an additively indecomposable quadratic form $\alpha x^2+2xy+\alpha 'y^2$, which will be mutually non-equivalent to other such forms, since they will have different determinants.

On the other hand, denoting by $u=\max\{u_{2i+1}\mid i\geq 0\}$ and applying Corollary~2.12 in \cite{KYZ} (letting $B$ in the corollary be $D^{\frac{1}{12}-\epsilon}$) we see that almost all real quadratic fields have $D^{\frac{1}{12}-\epsilon}$ additively indecomposable binary quadratic forms.
\end{proof}

In the above proof, quadratic forms $\alpha x^2+2xy+\alpha'y^2$ are additively indecomposable as both classical and non-classical forms. For an indecomposable integer $\alpha$ and $b\in\Z$ that satisfies Lemma~\ref{lem:binary}, let us denote by $Q_{\alpha,b}=\alpha x^2+2bxy+\alpha'y^2$. To check the equivalence of such forms, we can use the following criterion.

\begin{lemma} \label{lem:sumofsquares_cond}
If $Q_{\alpha,b}$ is equivalent over $\OK$ to $Q_{\beta,c}$, then $\alpha\beta$ and $\alpha'\beta$ can be represented as the sum of squares.
\end{lemma}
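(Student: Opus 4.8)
*If $Q_{\alpha,b}$ is equivalent over $\OK$ to $Q_{\beta,c}$, then $\alpha\beta$ and $\alpha'\beta$ can be represented as the sum of squares.*

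The plan is to use not the full strength of the equivalence, but only its consequence that the single integer $\beta$ is represented by $Q_{\alpha,b}$ over $\OK$, and then to convert that representation into explicit sum-of-squares expressions by completing the square and invoking Lagrange's four-square theorem.

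Concretely, if $Q_{\alpha,b}$ is equivalent over $\OK$ to $Q_{\beta,c}$, then in particular $M_{Q_{\beta,c}}=N\,M_{Q_{\alpha,b}}\,N^t$ for some $N\in\OK^{2\times 2}$ with $\det N\in\OK^{\times}$; reading off the $(1,1)$-entry, and writing $(s,t)\in\OK^{2}$ for the first row of $N$, gives $\beta=Q_{\alpha,b}(s,t)=\alpha s^2+2bst+\alpha' t^2$ with $s,t\in\OK$. Multiplying this identity through by $\alpha$, respectively by $\alpha'$, and using $\alpha\alpha'=\Nm_{K/\Q}(\alpha)$, one completes the square to obtain
\[
\alpha\beta=(\alpha s+bt)^2+\bigl(\Nm_{K/\Q}(\alpha)-b^2\bigr)t^2,\qquad \alpha'\beta=(\alpha' t+bs)^2+\bigl(\Nm_{K/\Q}(\alpha)-b^2\bigr)s^2 .
\]
Here the coefficient $d:=\Nm_{K/\Q}(\alpha)-b^2$ is precisely $\det(Q_{\alpha,b})$; since $b\in\Z$ it is a rational integer, and since $Q_{\alpha,b}$ is totally positive definite it is a positive one. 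By Lagrange's four-square theorem we may write $d=a_1^2+a_2^2+a_3^2+a_4^2$ with $a_i\in\Z\subset\OK$, whence $d\,t^2=\sum_i(a_i t)^2$ and $d\,s^2=\sum_i(a_i s)^2$; substituting back exhibits each of $\alpha\beta$ and $\alpha'\beta$ as a sum of at most five squares of elements of $\OK$, which is the assertion.

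There is no serious obstacle here — the whole argument is the displayed identity together with the observation that $d$ is a positive rational integer, which is forced by $b\in\Z$ and by the total positive definiteness built into the definition of $Q_{\alpha,b}$, and which is exactly what lets Lagrange's theorem apply and keeps everything inside $\OK$. (If one only wants "sum of squares over $K$" the same identities and the same use of the fact that a positive rational is a sum of squares apply verbatim.) The one point worth flagging in the write-up is that only representability of the diagonal entry $\beta$ — not the full equivalence — is actually used, which is why the conclusion is symmetric in $\alpha$ and $\alpha'$.
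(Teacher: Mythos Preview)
Your argument is correct and follows essentially the same route as the paper: from equivalence deduce that $Q_{\alpha,b}$ represents $\beta$, multiply by $\alpha$ (resp.\ $\alpha'$), and complete the square to get $(\alpha s+bt)^2+(\Nm_{K/\Q}(\alpha)-b^2)t^2$. The only difference is cosmetic: the paper stops at this identity (since $\Nm_{K/\Q}(\alpha)-b^2\in\Z_{>0}$ already makes the second term a sum of copies of $t^2$), whereas you invoke Lagrange to cap the number of squares at five, which is a harmless refinement.
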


\begin{proof}
Suppose that $Q_{\alpha,b}$ and $Q_{\beta,c}$ are equivalent. Then $Q_{\alpha,b}$ represents $\beta$, i.e. there are $v_1,v_2\in\OK$ such that 
\[
\beta=\alpha v_1^2+2bv_1v_2+\alpha'v_2^2.
\]
When we multiply this equation by $\alpha$, we obtain
\[
\alpha\beta=\alpha^2 v_1^2+2\alpha bv_1v_2+\Nm_{K/\Q}(\alpha)v_2^2=(\alpha v_1+b v_2)^2+(\Nm_{K/\Q}(\alpha)-b^2)v_2^2,
\]
which gives our assertion. The proof for $\alpha'\beta$ is analogous.
\end{proof}

We can speculate that when running over all the indecomposable integers (adding an extra condition of associativity, i.e. we call two integers $\alpha,\beta\in\OK$ \textit{associated
} if there exists a unit $\varepsilon\in\OK$ such that $\alpha=\beta\varepsilon$) and eligible rational integers $m$, such forms are rarely equivalent. Hence, we would have many more additively indecomposable quadratic forms. That leads us to the following question:

\begin{ques}
    Let $\alpha,\beta\in\OK$ be indecomposable integers such that $\beta$ is not associated with $\alpha$ and $\alpha'$, and $\beta\neq \alpha\eta^2,\alpha'\eta^2$ and $\alpha\neq \beta\eta^2,\beta'\eta^2$ for every $\eta\in\OK$. Are totally positive forms $Q_1(x,y)=\alpha x^2+2mxy+\alpha'y^2$ and $Q_2(x,y)=\beta x^2+2nxy+\beta'y^2$ ever equivalent?
\end{ques}

We have weak evidence for the negative answer to the question above, as we show that for a special family of quadratic fields, the number of binary additively indecomposable forms is large.

\begin{prop}\label{prop:m2+1}
Let $D=m^2+1$ be a square-free positive integer such that $m$ is odd. Then the number of non-equivalent, additively indecomposable binary quadratic forms over $\mathcal{O}_{\Q(\sqrt{D})}$ is at least $\frac{1}{2}m(m+1)$. 
\end{prop}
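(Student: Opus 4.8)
The plan is to build, from the indecomposable integers coming out of the continued fraction of $\sqrt D$, a family of $\tfrac12 m(m+1)$ binary forms of the shape $\alpha x^2+2bxy+\alpha'y^2$ with $\alpha$ indecomposable, and to show no two of them are $\OK$-equivalent by separating them with the invariants $\Nm_{K/\Q}(\det Q)$ and $\min Q$. Since $m$ is odd we have $D=m^2+1\equiv 2\pmod 4$, so $\OK=\Z[\sqrt D]$ and $\varepsilon=m+\sqrt D$ is a unit with $\Nm_{K/\Q}(\varepsilon)=-1$. The continued fraction of $-\omega_D'=\sqrt D$ is $[m;\overline{2m}]$, so its convergents are $\alpha_i=\varepsilon^{i+1}$ and its odd‑index semiconvergents are $\alpha_{1,r}=\alpha_1+r\alpha_2=\varepsilon^2(1+r\varepsilon)$ for $0\le r\le u_3=2m$. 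For $r\in\{1,\dots,m\}$ set $\gamma_r:=1+r\varepsilon=(1+rm)+r\sqrt D=\varepsilon^{-2}\alpha_{1,r}$. As the product of the totally positive unit $\varepsilon^{-2}$ with the indecomposable semiconvergent $\alpha_{1,r}$, each $\gamma_r$ is an indecomposable integer, and $\Nm_{K/\Q}(\gamma_r)=1+2rm-r^2=m^2+1-(m-r)^2$; in particular these norms are pairwise distinct, and all at most $D$, as $r$ runs over $\{1,\dots,m\}$.

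For $r\in\{1,\dots,m\}$ and an integer $b$ with $1\le b$ and $b^2\le\tfrac34\Nm_{K/\Q}(\gamma_r)$, put $Q_{\gamma_r,b}(x,y)=\gamma_r x^2+2bxy+\gamma_r'y^2$. Since $\gamma_r\succ0$ and $\det Q_{\gamma_r,b}=\Nm_{K/\Q}(\gamma_r)-b^2$ is a positive rational integer, $Q_{\gamma_r,b}$ is totally positive definite; and since $\gamma_r$ together with its conjugate $\gamma_r'$ are indecomposable and the middle coefficient is nonzero, Proposition~\ref{prop:inde_from_inde} shows $Q_{\gamma_r,b}$ is additively indecomposable. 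A direct estimate (using $\Nm_{K/\Q}(\gamma_r)=1+r(2m-r)\ge rm$, so the number of admissible $b$ for a given $r$ is at least $\lfloor\tfrac{\sqrt3}{2}\sqrt{rm}\rfloor$, together with $\sum_{r\le m}\sqrt r\ge\tfrac23 m^{3/2}$, plus a finite verification for the small $m$ with $m^2+1$ squarefree) shows the total number of forms produced is at least $\tfrac12 m(m+1)$.

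The crux is non‑equivalence, and it rests on computing $\min Q_{\gamma_r,b}$; I expect it to equal $\Nm_{K/\Q}(\gamma_r)$. Over $\OK^2$ (not merely $\Z^2$) one uses the identity $\gamma_r\,Q_{\gamma_r,b}(x,y)=u^2+cv^2$ with $u=\gamma_r x+by$, $v=y$ and $c=\Nm_{K/\Q}(\gamma_r)-b^2\ge1$, together with the two‑squares norm identity
\[
\Nm_{K/\Q}(u^2+cv^2)=\bigl(\Nm_{K/\Q}(u)+c\,\Nm_{K/\Q}(v)\bigr)^2+4cD\,k^2 ,
\]
where $k\in\Z$ is the determinant of the coordinate matrix of $u,v$ with respect to $1,\sqrt D$. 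When $k=0$ the elements $u,v$ are rationally proportional and the divisibility $u\equiv bv\pmod{\gamma_r}$ forces $\Nm_{K/\Q}(Q_{\gamma_r,b}(x,y))\ge\Nm_{K/\Q}(\gamma_r)$; when $k\ne 0$ the displayed identity gives $\Nm_{K/\Q}(Q_{\gamma_r,b}(x,y))\ge 4cD/\Nm_{K/\Q}(\gamma_r)\ge\Nm_{K/\Q}(\gamma_r)$, the last step using $D\ge\Nm_{K/\Q}(\gamma_r)$ together with $c\ge\tfrac14\Nm_{K/\Q}(\gamma_r)$ — which is exactly why we imposed $b^2\le\tfrac34\Nm_{K/\Q}(\gamma_r)$. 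Since $Q_{\gamma_r,b}(1,0)=\gamma_r$, this gives $\min Q_{\gamma_r,b}=\Nm_{K/\Q}(\gamma_r)$.

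Finally, both $\min Q_{\gamma_r,b}=m^2+1-(m-r)^2$ and $\Nm_{K/\Q}(\det Q_{\gamma_r,b})=\bigl(\Nm_{K/\Q}(\gamma_r)-b^2\bigr)^2$ are $\OK$‑equivalence invariants: the first recovers $m-r$, hence $r$, and then the second recovers $b\ge1$. So distinct pairs $(r,b)$ give non‑equivalent forms, which yields the asserted $\tfrac12 m(m+1)$ non‑equivalent additively indecomposable bin
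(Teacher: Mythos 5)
Your construction coincides with the paper's: the forms $\gamma_r x^2+2bxy+\gamma_r'y^2$ with $\gamma_r=(1+rm)+r\sqrt{D}$ running over the non-unit indecomposables for $1\le r\le m$, with indecomposability supplied by Proposition~\ref{prop:inde_from_inde}. Where you genuinely diverge is the non-equivalence step: the paper separates forms with distinct $r$ by showing $\gamma_r\gamma_s'$ cannot be a sum of squares (Lemma~\ref{lem:sumofsquares_cond} plus the trace bound $\tfrac12\tr_{K/\Q}(\gamma_r\gamma_s')\le m^2+1<m^2+2$), and forms with equal $r$ by their rational-integer determinants; you instead propose the invariant pair $(\min Q,\ \Nm_{K/\Q}(\det Q))$. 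That strategy is legitimate in principle, but as written it has two genuine gaps.

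First, the crux $\min Q_{\gamma_r,b}=\Nm_{K/\Q}(\gamma_r)$ is not established in the case $k=0$. Your identity disposes of $k\ne 0$ cleanly, but when $u=\gamma_rx+by$ and $v=y$ are rationally proportional, say $u=\lambda y$ with $\lambda=b+q$ and $q=\gamma_rx/y\in\Q$, one gets $\Nm_{K/\Q}(Q(x,y))=(\lambda^2+c)^2\Nm_{K/\Q}(y)^2/\Nm_{K/\Q}(\gamma_r)$, and the required bound $(\lambda^2+c)\,|\Nm_{K/\Q}(y)|\ge\Nm_{K/\Q}(\gamma_r)$ does not follow from $c\ge\tfrac14\Nm_{K/\Q}(\gamma_r)$ alone: if $|\Nm_{K/\Q}(y)|\le 3$ and $q<0$ with $|q|$ close to $b$ (which $q^2=\Nm_{K/\Q}(\gamma_r)\Nm_{K/\Q}(x)/\Nm_{K/\Q}(y)$ and $b^2\le\tfrac34\Nm_{K/\Q}(\gamma_r)$ do not preclude numerically), the estimate only yields about $\tfrac12\Nm_{K/\Q}(\gamma_r)$. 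Excluding these configurations needs an actual divisibility analysis of when $\gamma_rx/y$ can be rational with such small norms; the phrase ``the divisibility $u\equiv bv\pmod{\gamma_r}$ forces\dots'' is an assertion, not an argument. Second, your count is short for small $m$: restricting to $b^2\le\tfrac34\Nm_{K/\Q}(\gamma_r)$ gives roughly $\tfrac{1}{\sqrt3}m^2-m$ forms, which exceeds $\tfrac12m(m+1)$ only for $m$ around $20$ and beyond, and the promised ``finite verification'' for the remaining odd $m$ is not carried out. The paper sidesteps both issues simultaneously: it admits all $b$ with $b^2<\Nm_{K/\Q}(\gamma_r)$ (Lemma~\ref{lem:binary}), uses $\sqrt{\Nm_{K/\Q}(\gamma_r)}>r$ to get at least $r$ forms for each $r$, hence exactly $\sum_{r=1}^m r=\tfrac12m(m+1)$, and never needs to compute a minimum.
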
 

\begin{proof}
We use the description of indecomposable integers via the continued fraction of $\sqrt{D}=[m,\overline{2m}]$ (as seen at the beginning of the section).  It follows that the representatives of all non-unit indecomposable integers can be expressed as
\[
\alpha_i=mi+1+i\sqrt{m^2+1}
\]
where $1\leq i \leq 2m-1$. Moreover, integers $mi+1+i\sqrt{m^2+1}$ with $i>m$ are associated with integers $m(2m-i)+1-(2m-i)\sqrt{m^2+1}$. Thus, it suffices to discuss only those with $1\leq i \leq m$.

To show that additively indecomposable quadratic forms originating from these indecomposable integers are not equivalent, we apply Lemma~\ref{lem:sumofsquares_cond}, i.e. we prove that $\alpha_i\alpha_j'$ cannot be written as a sum of squares for any $i\neq j$, $1\leq i,j \leq m$.  
For $1\leq i,j\leq m$, $i\neq j$, we have
\begin{align*}
\alpha_i\alpha_j'&=\left(mi+1+i\sqrt{m^2+1}\right)\left(mj+1-j\sqrt{m^2+1}\right)\\&=j(m-i)+mi+1+(i-j)\sqrt{m^2+1}.
\end{align*}
Assume that this product can be written as a sum of squares. Since $i\neq j$, i.e. the coefficient of $\alpha_i\alpha_j'$ before $\sqrt{m^2+1}$ is non-zero, there exist $a,b\neq 0$ such that \[\alpha_i\alpha_j'\succeq (a+b\sqrt{m^2+1})^2=a^2+b^2(m^2+1)+2ab\sqrt{m^2+1}.\] Therefore, we must have
\[
\frac{1}{2}\tr_{K/\Q}(\alpha_i\alpha_j')=j(m-i)+mi+1\geq a^2+b^2(m^2+1)\geq m^2+2. 
\] 
However, $j(m-i)+mi+1\leq m(m-i)+mi+1=m^2+1$. Thus, $\alpha_i\alpha_j'$ cannot be written as a sum of squares, which implies that the corresponding quadratic forms are always non-equivalent. 

Furthermore, we have
\[
\Nm_{K/\Q}(\alpha_i)=i(2m-i)+1.
\]
It can be easily seen that $\sqrt{\Nm_{K/\Q}(\alpha_i)}>i$, which produces the lower bound $\frac{1}{2}m(m+1)$.
\end{proof}

Using the above, we can draw the following conclusion.

\begin{theorem}[=Theorem~\ref{thm:almost_all_intro}]\label{thm:fixed_det}
Let $n\in\N$. Then there exists a square-free integer $D>1$ and $d\in\N$ such that there are at least $n$ non-equivalent, additively indecomposable binary quadratic forms of determinant $d$ in $\Q(\sqrt{D})$. 
\end{theorem}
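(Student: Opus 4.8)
The plan is to re-use the family from Proposition~\ref{prop:m2+1} while keeping track of determinants. Over $K=\Q(\sqrt{m^2+1})$ with $m$ odd and $m^2+1$ square-free, the indecomposable integer $\alpha_j=mj+1+j\sqrt{m^2+1}$ has $\Nm_{K/\Q}(\alpha_j)=j(2m-j)+1=(m^2+1)-(m-j)^2$, so, writing $Q_{\alpha_j,b}=\alpha_j x^2+2bxy+\alpha_j'y^2$ as in the paragraph after Theorem~\ref{thm:almost_all}, we have
\[
\det(Q_{\alpha_j,b})=\Nm_{K/\Q}(\alpha_j)-b^2=(m^2+1)-\bigl((m-j)^2+b^2\bigr).
\]
Hence prescribing a determinant $d$ is the same as prescribing the value $(m-j)^2+b^2=m^2+1-d$. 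So to produce $n$ additively indecomposable binary forms of a common determinant it suffices to find an integer $N$ admitting $n$ representations $N=a^2+b^2$ with $b\ne 0$ in which the first summand $a=m-j$ takes $n$ distinct values; the determinant will then be $d=m^2+1-N$. I will also need these forms to be pairwise non-equivalent, and this I will deduce exactly as in the proof of Proposition~\ref{prop:m2+1}: distinct $j$'s (equivalently distinct $a$'s) force inequivalence.

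Given $n$, I would first fix $N=p_1\cdots p_r$ with $p_1,\dots,p_r$ distinct primes $\equiv 1\pmod 4$ and $2^{r-1}\ge n$. Then $N$ is not a perfect square and has exactly $2^{r-1}$ representations $N=a^2+b^2$ with $1\le a<b$; distinct such representations have disjoint pairs $\{a,b\}$, so their first summands are all distinct. Choose $n$ of them, $(a_1,b_1),\dots,(a_n,b_n)$, with the $a_k$ distinct (and $b_k>a_k\ge 1$). Next choose $m$ odd with $m^2+1$ square-free and $m>\sqrt N$ (such $m$ form an infinite set; this is classical, and is the setting of Proposition~\ref{prop:m2+1}). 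Put $D=m^2+1$, $d=m^2+1-N\ (\ge 2)$, and for $k=1,\dots,n$ set $i_k=m-a_k$ and $Q_k=\alpha_{i_k}x^2+2b_k xy+\alpha_{i_k}'y^2$ over $\mathcal{O}_{\Q(\sqrt D)}$, where $\alpha_j=mj+1+j\sqrt D$.

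It then remains to verify the properties. Since $a_k<\sqrt N<m$ we have $1\le i_k\le m-1$, so each $\alpha_{i_k}$ is an indecomposable integer by the continued-fraction description used in Proposition~\ref{prop:m2+1}, and the $i_k$ are pairwise distinct. Each $Q_k$ is totally positive definite because $\alpha_{i_k}\succ 0$ and $\det(Q_k)=\Nm_{K/\Q}(\alpha_{i_k})-b_k^2=(m^2+1-a_k^2)-(N-a_k^2)=d$ is a positive rational. Moreover $2b_k$ is a nonzero rational integer and $b_k^2=N-a_k^2<m^2+1-a_k^2=\Nm_{K/\Q}(\alpha_{i_k})$, so Lemma~\ref{lem:binary} shows $Q_k$ is additively indecomposable (as a non-classical, hence also as a classical, form). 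Finally, if $Q_k$ were equivalent over $\mathcal{O}_{\Q(\sqrt D)}$ to $Q_l$ with $k\ne l$, then $i_k\ne i_l$ and Lemma~\ref{lem:sumofsquares_cond} would force $\alpha_{i_k}'\alpha_{i_l}$ — and hence its Galois conjugate $\alpha_{i_k}\alpha_{i_l}'$ — to be a sum of squares; but the trace computation in the proof of Proposition~\ref{prop:m2+1} shows that $\alpha_i\alpha_j'$ is \emph{not} a sum of squares whenever $1\le i,j\le m$ and $i\ne j$, a contradiction. Thus $Q_1,\dots,Q_n$ are $n$ pairwise non-equivalent, additively indecomposable binary quadratic forms over $\mathcal{O}_{\Q(\sqrt D)}$, all of determinant $d$.

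There is no deep obstacle here once Proposition~\ref{prop:m2+1} is available: the only genuinely new step is the reparametrisation above, which converts ``common determinant $d$'' into ``common value of $(m-j)^2+b^2$'', after which one just feeds in an integer $N$ with many representations as a sum of two squares. The remaining points to watch are purely arithmetic — ensuring the chosen representations have $n$ distinct first summands $a_k$ (handled by taking $N$ square-free with enough prime factors $\equiv 1\pmod 4$), and that $m$ can be taken odd, large, and with $m^2+1$ square-free, the last being the single external input and a classical fact already implicit in the scope of Proposition~\ref{prop:m2+1}.
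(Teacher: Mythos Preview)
Your proof is correct and follows essentially the same approach as the paper: reparametrise $\det(Q_{\alpha_j,b})$ as $(m^2+1)-((m-j)^2+b^2)$, then exploit integers with many representations as a sum of two squares, invoking Proposition~\ref{prop:m2+1} (via Lemma~\ref{lem:sumofsquares_cond}) for pairwise inequivalence. The only difference is cosmetic: you first fix a square-free $N=p_1\cdots p_r$ and then choose $m$ freely with $m^2+1$ square-free and $m>\sqrt N$, whereas the paper ties $m$ and $N$ together via $m=3\prod p_i^{(1)}$ and $N=8\prod (p_i^{(1)})^2$; your decoupling is arguably cleaner and relies on the more standard fact that infinitely many $m$ have $m^2+1$ square-free.
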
 

\begin{proof}
Consider the family $D=m^2+1$ with $m$ odd. From Proposition~\ref{prop:m2+1}, we know that $Q_{i,k}(x,y)=\alpha_i x^2+2kxy+\alpha_i'y^2$ where $\alpha_i=mi+1+i\sqrt{m^2+1}$, $1\leq i\leq m$ and $k^2<N_{K/\Q}(\alpha_i)$ form a set of non-equivalent, additively indecomposable binary quadratic forms in $\Q(\sqrt{m^2+1})$. We see that 
\[
\det(Q_{i,k})=N_{K/\Q}(\alpha_i)-k^2=2mi+1-i^2-k^2.
\] 
Setting $i=m-j$, we obtain $m^2+1-\det(Q_{m-j,k})=j^2+k^2$. Therefore, we want to choose a suitable subfamily of these fields for which $m^2+1-\det(Q_{i,k})$ can be written as a sum of two squares in at least $n$ different ways for some choice of $d=\det(Q_{i,k})$. Let us put $m = 3\prod_{i=1}^s p_i^{(1)}$ and $d = \prod_{i=1}^s (p_i^{(1)})^2+1$ where $p_i^{(1)}$ is the $i$-th prime congruent to $1$ modulo $4$, and $s\in\N$. We see that $m^2+1$ is square-free for infinitely many cases of $s$, and $m^2+1-d=8\prod_{i=1}^s (p_i^{(1)})^2$ is not square. Since $m^2+1-d$ is divisible only by $2$ and primes congruent to $1$ modulo $4$, Jacobi's two-square theorem implies that there are $\tau(\prod_{i=1}^s (p_i^{(1)})^2)$ ways to write $m^2+1-d$ as a sum of two non-zero squares, where $\tau$ is the divisor function. Clearly, $\tau(\prod_{i=1}^s (p_i^{(1)})^2)$ increases with $s$. Thus, from some value of $s$, it is greater than $n$.
\end{proof}

\subsection{Universal quadratic forms}
We apply the results of Section~\ref{sec:universal} to certain real quadratic fields. For this, we shall follow a technique similar to that in \cite{KYZ}. Specifically, for the real quadratic number fields $K$, we can find $\delta \in \OK^{\vee,+}$ such that $\# U(\delta)\ge u$, where $u$ is the largest odd coefficient of the continued fraction of the generating integer of $\OK$ \cite[Prop. 3.1]{KT}. Hence, we have that $\#R^{\mathcal{C}}(\delta,\ldots,\delta)\ge u^n,$ for $n$ such $\delta$s. We get the following:
\begin{cor}
    Let $K=\Q(\sqrt{D})$, where $D>1$ is a square-free natural number and $u$ is the maximum of the odd coefficients of the continued fraction of $\sqrt{D}$ or $\dfrac{\sqrt{D}-1}{2}$, depending on whether $D\equiv 2,3\pmod{4}$ or not. 
 Then every $n$-universal totally positive definite quadratic form over $\OK$ has at least $u/2$ variables.  
\end{cor}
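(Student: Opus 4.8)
The plan is to combine Theorem~\ref{thm:uni_lowerbound} with the cited result \cite[Prop.~3.1]{KT} on the size of $U(\delta)$. First I would recall that by \cite[Prop.~3.1]{KT}, for a real quadratic field $K=\Q(\sqrt{D})$ there exists $\delta\in\OK^{\vee,+}$ with $\#U(\delta)\ge u$, where $u$ is the maximum of the odd partial quotients of the continued fraction expansion of $\sqrt{D}$ (if $D\equiv 2,3\pmod 4$) or of $\tfrac{\sqrt{D}-1}{2}$ (otherwise); these are exactly the generators of $\OK$ as in the Preliminaries. Fix this $\delta$ and take $\delta_1=\cdots=\delta_n=\delta$ in the definition of $R^{\mathcal{C}}(\delta_1,\ldots,\delta_n)$.

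Next I would bound $\#R^{\mathcal{C}}(\delta,\ldots,\delta)$ from below. The remark following Theorem~\ref{thm:uni_lowerbound} already notes that $\#R^{\mathcal{C}}(\delta_1,\ldots,\delta_n)\ge\prod_{i=1}^n\#U(\delta_i)$; applying this with all $\delta_i=\delta$ gives $\#R^{\mathcal{C}}(\delta,\ldots,\delta)\ge u^n$. (Concretely, for any choice of $\alpha_i\in U(\delta)$ one can take the diagonal form $\sum_i\alpha_i x_i^2$, adjusting the off-diagonal $\beta_{i,j}$ to $0$, which is totally positive definite since each $\alpha_i$ is totally positive; distinct tuples $(\alpha_1,\ldots,\alpha_n)$ give distinct forms.) Then Theorem~\ref{thm:uni_lowerbound} yields that every classical $n$-universal totally positive definite quadratic form over $\OK$ has at least $\tfrac{\sqrt[n]{\#R^{\mathcal{C}}(\delta,\ldots,\delta)}}{d}\ge\tfrac{\sqrt[n]{u^n}}{d}=\tfrac{u}{d}$ variables, and since $d=[K:\Q]=2$ for a real quadratic field, this is $u/2$, as claimed.

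The only real obstacle is making sure the hypotheses of \cite[Prop.~3.1]{KT} line up with the continued-fraction data as stated in the corollary, namely that the relevant $\delta$ indeed lies in $\OK^{\vee,+}$ and that $U(\delta)$ attains size at least the largest odd partial quotient $u$; this is precisely what that cited proposition provides, so the step is quotation rather than new work. One should also note that the statement is vacuous (or trivially true) when $u\le 2$, so no issue arises there. Everything else — the product lower bound for $\#R^{\mathcal{C}}$, the $n$-th root, and the substitution $d=2$ — is immediate.
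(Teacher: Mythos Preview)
Your proposal is correct and follows essentially the same route as the paper: the paper derives the corollary in one line from Theorem~\ref{thm:uni_lowerbound} together with \cite[Prop.~3.1]{KT}, using $\delta_1=\cdots=\delta_n=\delta$ and the diagonal-form lower bound $\#R^{\mathcal{C}}(\delta,\ldots,\delta)\ge u^n$, then substituting $d=2$. Your write-up is a faithful expansion of exactly this argument.
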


Now, we apply Theorem~\ref{thm:uni_lowerbound} to the family which we discussed in Subsection~\ref{sec:lower}.

\begin{theorem}
    Let $D = m^2+1$ be a square-free positive integer such that $m$
is odd. Then every classical $2$-universal totally positive definite quadratic form over $\OK$ has at least $\frac{1}{2}\sqrt{6m^2+4m+1}$ variables.
\end{theorem}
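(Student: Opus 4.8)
The plan is to apply Theorem~\ref{thm:uni_lowerbound} with $n=2$. Since $d=[K:\Q]=2$, it suffices to exhibit $\delta_1,\delta_2\in\OK^{\vee,+}$ with $\#R^{\mathcal{C}}(\delta_1,\delta_2)\ge 6m^2+4m+1$, as then every classical $2$-universal totally positive definite quadratic form over $\OK$ has at least $\frac12\sqrt{6m^2+4m+1}$ variables.

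First I would set up the codifferent. Since $m$ is odd, $D=m^2+1\equiv 2\pmod 4$, so $\OK=\Z[\sqrt D]$ and $\OK^{\vee}=\frac{1}{2\sqrt D}\OK$; also $\sqrt D-m$ is a unit of $\OK$ because $\Nm_{K/\Q}(\sqrt D-m)=m^2-D=-1$. Take
\[
\delta_1=\frac{\sqrt D-m}{2\sqrt D},\qquad \delta_2=\delta_1'=\frac{\sqrt D+m}{2\sqrt D};
\]
both lie in $\OK^{\vee}$ and are totally positive because $\sqrt D>m$. A short computation gives $\tr_{K/\Q}(\delta_1(p+q\sqrt D))=p-mq$ for $p,q\in\Z$. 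Hence, with $\alpha_q:=(mq+1)+q\sqrt D$ (the indecomposables used in Proposition~\ref{prop:m2+1}), one has $\tr_{K/\Q}(\delta_1\alpha_q)=1$ for all $q\in\Z$, and $\alpha_q\succ 0$ exactly when $0\le q\le 2m$ (for $q\ge 1$, positivity of the conjugate reduces to $q^2-2mq-1<0$). Therefore $\{\alpha_q\mid 0\le q\le 2m\}\subseteq U(\delta_1)$ and, conjugating, $\{\alpha_q'\mid 0\le q\le 2m\}\subseteq U(\delta_2)$; each set has $2m+1$ elements.

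Next I would produce members of $R^{\mathcal{C}}(\delta_1,\delta_2)$. The diagonal forms $\alpha_i x^2+\alpha_j'y^2$ with $0\le i,j\le 2m$ are totally positive definite and lie in $R^{\mathcal{C}}(\delta_1,\delta_2)$; there are $(2m+1)^2$ of them and they are pairwise distinct. In addition, for each $q\in\{1,\dots,2m-1\}$ and each nonzero $k\in\Z$ with $k^2<\Nm_{K/\Q}(\alpha_q)=q(2m-q)+1$, the form $\alpha_q x^2+2kxy+\alpha_q'y^2$ has determinant $\Nm_{K/\Q}(\alpha_q)-k^2>0$, is totally positive definite, and lies in $R^{\mathcal{C}}(\delta_1,\delta_2)$; these are mutually distinct and distinct from the diagonal forms. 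There are $2\lfloor\sqrt{q(2m-q)}\rfloor$ admissible $k$ for a given $q$, and the key elementary fact is
\[
\lfloor\sqrt{q(2m-q)}\rfloor=\lfloor\sqrt{m^2-(m-q)^2}\rfloor\ge m-|m-q|,
\]
valid because $m^2-(m-q)^2=(m-|m-q|)(m+|m-q|)\ge(m-|m-q|)^2$ and $m-|m-q|$ is a non-negative integer. Summing over $q$ yields $\sum_{q=1}^{2m-1}2\lfloor\sqrt{q(2m-q)}\rfloor\ge 2\sum_{q=1}^{2m-1}(m-|m-q|)=2m^2$, so
\[
\#R^{\mathcal{C}}(\delta_1,\delta_2)\ \ge\ (2m+1)^2+2m^2\ =\ 6m^2+4m+1,
\]
and Theorem~\ref{thm:uni_lowerbound} concludes the proof.

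The two steps I expect to require care are the following. Choosing $\delta_1$ is the one genuinely creative move: one wants $\tr_{K/\Q}(\delta_1)=1$ and $\tr_{K/\Q}(\delta_1\sqrt D)=0$ so that the whole arithmetic progression $\alpha_q=1+q(m+\sqrt D)$ of indecomposables falls into $U(\delta_1)$, which forces $\delta_1=(\sqrt D-m)/(2\sqrt D)$ and makes $\#U(\delta_1)$ as large as $2m+1$. The subtler point is the counting: the $(2m+1)^2=4m^2+4m+1$ diagonal forms alone do not reach $6m^2+4m+1$, so the off-diagonal forms are indispensable, and one must let $q$ range over the full interval $1\le q\le 2m-1$ — rather than $q\le m$ as in Proposition~\ref{prop:m2+1} — because although $\alpha_q$ with $q>m$ is merely an associate of an earlier indecomposable, it still lies in $U(\delta_1)$ and the corresponding binary forms are honestly new.
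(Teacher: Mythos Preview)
Your proof is correct and follows the same approach as the paper: you choose the same element $\delta_1=\frac{\sqrt{D}-m}{2\sqrt{D}}$ (which coincides with the paper's $\delta=\frac{1}{2(m^2+1)}(m^2+1-m\sqrt{m^2+1})$) and its conjugate, exhibit the same $(2m+1)^2$ diagonal forms together with the off-diagonal forms $\alpha_q x^2\pm 2kxy+\alpha_q' y^2$, and bound the latter via the same inequality $\sqrt{q(2m-q)}\ge m-|m-q|$ (equivalently, the paper's $\sqrt{\Nm_{K/\Q}(\alpha_i)}>i$ for $1\le i\le m$ paired with $\Nm_{K/\Q}(\alpha_i)=\Nm_{K/\Q}(\alpha_{2m-i})$), yielding the identical count $6m^2+4m+1$.
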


\begin{proof}
Let us consider integers $\alpha_i=mi+1+i\sqrt{m^2+1}$ with $0\leq i\leq 2m$, where for $i=0,2m$, we also include units. From \cite[Proposition~3.1]{KT}, there exists a totally positive element $\delta$ of the codifferent of $K$, namely
\[
\delta=\frac{1}{2(m^2+1)}(m^2+1-m\sqrt{m^2+1}), 
\]
such that $\tr_{K/\Q}(\alpha_i\delta)=1$ for all $0\leq i\leq 2m$. 

Now, we apply Theorem~\ref{thm:uni_lowerbound} with $\delta_1=\delta$ and $\delta_2=\delta'$. First, we can consider quadratic forms $\alpha_ix^2+\alpha_j'y^2$ for $0\leq i,j\leq 2m$, which belong to $R^{\mathcal{C}}(\delta,\delta')$. Second, the set $R^{\mathcal{C}}(\delta,\delta')$ also contains forms $\alpha_ix^2+2kxy+\alpha_i'y^2$ where $k\in\N$, $0<k^2<\Nm_{K/\Q}(\alpha_i)$ and $1\leq i\leq 2m-1$. Note that $\Nm_{K/\Q}(\alpha_i)=\Nm_{K/\Q}(\alpha_{2m-i})$, and we can use the lower bound $\sqrt{\Nm_{K/\Q}(\alpha_i)}> i$ for $1\leq i\leq m$. We can also include quadratic forms $\alpha_ix^2-2kxy+\alpha_i'y^2$. 
That gives us
\[
\#R(\delta,\delta')\geq (2m+1)^2+4\sum_{i=1}^{m-1}i+2m=6m^2+4m+1. \qedhere
\] 
\end{proof}

\subsection{Computations
} \label{sec:concrete}
In this section, we show the whole structure of additively indecomposable quadratic forms for several quadratic fields. We begin by presenting two auxiliary results.

\begin{lemma} \label{lemma:decom_conds}
Let $C\in\R^+$ be such that if $\beta\in K^{+}$ with $\Nm_{K/\Q}(\beta)\geq C$, then there exists $\alpha \in \OK^+$ such that $\beta\succeq \alpha$. A binary quadratic form $Q$ is additively decomposable if
\begin{enumerate}
\item $\min(Q)\leq \frac{\Nm_{K/\Q}(\det(Q))}{C}$. \label{lemma:decom_conds1} 
\item There exists $\delta\in\OK^{+}$ such that $\det(Q) \succeq \delta$ is divisible by $\alpha \in Q(\OK^2)$ such that $\Nm_{K/\Q}(\alpha)=\min(Q)$. \label{lemma:decom_conds2} 
\end{enumerate}
\end{lemma}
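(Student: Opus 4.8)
The plan is to obtain both sufficient conditions from the same mechanism used in the proof of Theorem~\ref{thm:main1}: I would exhibit, in each case, a nontrivial decomposition $Q=\mu x_2^2+(Q-\mu x_2^2)$ with $\mu\in\OK^+$. Recall from that proof that, by Sylvester's criterion, $Q-\mu x_2^2$ is totally positive semi-definite as soon as $\det(Q)/A_{22}\succeq\mu$, where $A_{22}$ is the $(2,2)$ minor of the Gram matrix of $Q$; then $\mu x_2^2$ is also totally positive semi-definite and nonzero (as $\mu\neq 0$), and $Q-\mu x_2^2\neq 0$ since otherwise $Q=\mu x_2^2$ would fail to be totally positive definite — so this genuinely witnesses additive decomposability, and it respects the classical/non-classical distinction because subtracting $\mu x_2^2$ alters only one diagonal entry. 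The feature specific to binary forms is that $A_{22}$ is simply the entry $a_{11}=Q(1,0)$; hence, after a unimodular change of basis carrying a prescribed primitive vector $v$ to $(1,0)$, I may assume $A_{22}=a_{11}=Q(v)$. Everything then reduces to choosing $v$ and $\mu$ appropriately.

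First, for condition~(\ref{lemma:decom_conds1}), I would take $v$ to be a minimal vector of $Q$, so that after the basis change $A_{22}=Q(v)$ with $\Nm_{K/\Q}(A_{22})=\min(Q)$. Then $\det(Q)/A_{22}\in K^{+}$ has norm $\Nm_{K/\Q}(\det(Q))/\min(Q)\ge C$ by the assumption $\min(Q)\le\Nm_{K/\Q}(\det(Q))/C$, so the defining property of $C$ supplies $\mu\in\OK^+$ with $\det(Q)/A_{22}\succeq\mu$, and the decomposition above applies.

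Second, for condition~(\ref{lemma:decom_conds2}), I read the hypothesis as: there is $\alpha\in Q(\OK^2)$ with $\Nm_{K/\Q}(\alpha)=\min(Q)$, and there is $\delta\in\OK^+$ with $\alpha\mid\delta$ and $\det(Q)\succeq\delta$. Writing $\alpha=Q(v)$ and moving $v$ to $(1,0)$ gives $A_{22}=\alpha$. Set $\mu:=\delta/\alpha$; since $\alpha,\delta\succ 0$ and $\delta/\alpha\in\OK$ by divisibility, $\mu\in\OK^+$, in particular $\mu\neq 0$. Dividing $\delta\preceq\det(Q)$ by $\alpha$ gives $\mu\preceq\det(Q)/\alpha=\det(Q)/A_{22}$, so $Q-\mu x_2^2$ is totally positive semi-definite and $Q=\mu x_2^2+(Q-\mu x_2^2)$ is the required decomposition.

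The hard part — and it is exactly the step the proof of Theorem~\ref{thm:main1} also passes over quickly — is justifying the basis change sending $v$ to $(1,0)$. This is fine because in both parts $v$ is necessarily primitive: if $v=\lambda u$ for a non-unit $\lambda\in\OK$ then $\Nm_{K/\Q}(Q(v))=\Nm_{K/\Q}(\lambda)^2\,\Nm_{K/\Q}(Q(u))\ge 4\min(Q)>\min(Q)$, a contradiction; and a primitive vector whose coordinates generate the unit ideal extends to an $\OK$-basis of $\OK^2$. Over a non-principal $\OK$ one either restricts to this situation or argues with the adjoint form directly: for binary $Q$ one has $M_Q^{\mathrm{adj}}=S M_Q S^{t}$ with $S=\left(\begin{smallmatrix}0&1\\-1&0\end{smallmatrix}\right)\in\OK^{2\times 2}$ of unit determinant, so $Q_{\mathrm{adj}}$ represents exactly the values of $Q$ and one may subtract $\mu L^2$ for a general $\OK$-linear form $L$ with coefficient vector $w$ satisfying $Q_{\mathrm{adj}}(w)=\alpha$, the criterion becoming $\det(Q)/Q_{\mathrm{adj}}(w)\succeq\mu$. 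Beyond this bookkeeping I expect no real obstacle: condition~(\ref{lemma:decom_conds1}) supplies the scalar $\mu$ through the $C$-hypothesis, while condition~(\ref{lemma:decom_conds2}) hands it over explicitly as $\delta/\alpha$.
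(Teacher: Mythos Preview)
Your proposal is correct and follows essentially the same approach as the paper: both assume, after a basis change placing a minimum-norm representation $\alpha$ in the $x^2$-coefficient, that one can peel off a multiple of $y^2$ --- obtained via the $C$-hypothesis in part~(\ref{lemma:decom_conds1}) and as $\delta/\alpha$ in part~(\ref{lemma:decom_conds2}) --- leaving a totally positive semi-definite remainder. You actually give more care than the paper to justifying the basis change (primitivity of a minimal vector, the adjoint workaround for non-principal $\OK$), which the paper simply asserts ``without loss of generality''; since the lemma is only applied over real quadratic fields of class number one, this extra diligence is harmless but not strictly needed.
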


\begin{proof}
Without loss of generality, we can assume that \[Q(x,y)=\alpha x^2+\beta xy+ \eta y^2,\]
where $\Nm_{K/\Q}(\alpha)=\min(Q),$ and $\alpha,\beta,\eta \in \OK.$

If $ \frac{\Nm_{K/\Q}(\det(Q))}{C}\ge \min(Q)$, then $\frac{\Nm_{K/\Q}(\det(Q))}{\min(Q)}\ge C$, and following an argument similar to the proof of Theorem~\ref{thm:main1}, we can conclude that $Q$ is additively decomposable.

For the second statement, assume that $\frac{\delta}{\alpha}\in \OK$ and rewrite $\eta$ in terms of the determinant as
\[
Q(x,y)=\alpha x^2+\beta xy+\frac{\det(Q)+\frac{\beta^2}{4}}{\alpha}y^2.
\]

Then clearly $Q$ can be decomposed as $Q=Q_1+Q_2$ where
\[
Q_1(x,y)=\alpha x^2+\beta xy+\frac{\det(Q)-\delta+\frac{\beta^2}{4}}{\alpha}y^2.
\]
and $Q_2(x,y)=\frac{\delta}{\alpha}y^2$. Obviously, $\frac{\det(Q)-\delta+\frac{\beta^2}{4}}{\alpha}$ is an algebraic integer, and $Q_1$ is totally positive semi-definite since $\alpha\succ 0$ and
\[
\det(Q_1)=\alpha\frac{\det(Q)-\delta+\frac{\beta^2}{4}}{\alpha}-\frac{\beta^2}{4}=\det(Q)-\delta\succeq 0.\qedhere
\] 
\end{proof}

For the constant $C$ from Theorem~\ref{thm:main}, we may take $C = \Delta_K + 1$. However, for practical purposes, we require sharper bounds to reduce the number of computations needed in our code, which will be described in more detail later. For quadratic fields, we can apply the following:
\begin{lemma} \label{lemma:qua_bound_norm}
Let $\xi\in K^+$ where $K=\Q(\sqrt{D})$ for $D>1$ square-free.
Let $\varepsilon_D$ be the fundamental unit of $K$, and let $\varepsilon_D^{+}=\varepsilon_D$ if $\Nm_{K/\Q}(\varepsilon_D)=1$, and $\varepsilon_D^+=\varepsilon_D^2$ otherwise. Then, if $\Nm_{K/\Q}(\xi)\geq \Nm_{K/\Q}(1+\varepsilon_D^+)$, then there exists $\alpha\in\OK^+$ such that $\xi\succeq \alpha$. Moreover, let us suppose that there is $\eta\in\OK^+$ such that $\eta=s_1+s_2\varepsilon_D^+$ for some $s_1,s_2\in[0,1)$. Then, if 
\[
\Nm_{K/\Q}(\xi)\geq \max(\Nm_{K/\Q}\left(1+s_2\varepsilon_D^+),~\Nm_{K/\Q}(s_1+\varepsilon_D^+)\right),
\] 
then there exists $\alpha\in\OK^{+}$ such that $\xi\succeq \alpha$.
\end{lemma}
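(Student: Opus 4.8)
The plan is to exploit the description of the totally positive units of $\OK$ together with the observation that the conclusion is insensitive to multiplying $\xi$ by a totally positive unit or passing to its conjugate.

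\textbf{Setup.} Fix the embedding $\sigma_1$ with $\sigma_1(\sqrt D)>0$ and pick the fundamental unit with $\sigma_1(\varepsilon_D)>1$. Then $\varepsilon_D^+$ is totally positive, $\Nm_{K/\Q}(\varepsilon_D^+)=1$ (so $\sigma_2(\varepsilon_D^+)=\sigma_1(\varepsilon_D^+)^{-1}$), and the group of totally positive units of $\OK$ is the infinite cyclic group $\langle\varepsilon_D^+\rangle$; this is exactly where the two-case definition of $\varepsilon_D^+$ is used (in the norm $-1$ case no odd power of $\varepsilon_D$ is totally positive, and $-1$ never is). The key point: if $\xi\succeq\alpha$ with $\alpha\in\OK^+$, then $u\xi\succeq u\alpha\in\OK^+$ for any totally positive unit $u$, and $\xi'\succeq\alpha'\in\OK^+$. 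Hence I may replace $\xi$ by any $\varepsilon_D^{+k}\xi$, and it suffices to exhibit a dominated totally positive integer after a convenient normalisation. Since $\{\sigma_1(\varepsilon_D^+)^k\sigma_1(\xi):k\in\Z\}$ is a geometric progression of ratio $\sigma_1(\varepsilon_D^+)>1$, I would first normalise so that
\[
1\le\sigma_1(\xi)<\sigma_1(\varepsilon_D^+).
\]
Write $N=\Nm_{K/\Q}(\xi)=\sigma_1(\xi)\sigma_2(\xi)$, so that $\sigma_2(\xi)=N/\sigma_1(\xi)>N/\sigma_1(\varepsilon_D^+)$.

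\textbf{The ``moreover'' statement.} With $\eta=s_1+s_2\varepsilon_D^+\in\OK^+$ at hand ($s_1,s_2\in[0,1)$), one has $\sigma_1(\eta)=s_1+s_2\sigma_1(\varepsilon_D^+)$ and $\sigma_2(\eta)=s_1+s_2\sigma_1(\varepsilon_D^+)^{-1}$. I would argue in three steps. (i) The hypothesis $N\ge\Nm_{K/\Q}(s_1+\varepsilon_D^+)=(s_1+\sigma_1(\varepsilon_D^+))(s_1+\sigma_1(\varepsilon_D^+)^{-1})$ gives, by a one-line estimate ($(s_1+\sigma_1(\varepsilon_D^+))(s_1+\sigma_1(\varepsilon_D^+)^{-1})\ge 1+s_1\sigma_1(\varepsilon_D^+)>s_2+s_1\sigma_1(\varepsilon_D^+)$), the inequality $N>s_1\sigma_1(\varepsilon_D^+)+s_2=\sigma_1(\varepsilon_D^+)\sigma_2(\eta)$, hence $\sigma_2(\xi)>N/\sigma_1(\varepsilon_D^+)>\sigma_2(\eta)$. (ii) If in addition $\sigma_2(\xi)\ge 1$, then together with $\sigma_1(\xi)\ge 1$ from the normalisation we get $\xi\succeq 1$, and we take $\alpha=1$. (iii) Otherwise $\sigma_2(\eta)<\sigma_2(\xi)<1$; then $\sigma_1(\xi)=N/\sigma_2(\xi)>N\ge\Nm_{K/\Q}(1+s_2\varepsilon_D^+)>1+s_2\sigma_1(\varepsilon_D^+)>\sigma_1(\eta)$, so with $\sigma_2(\xi)>\sigma_2(\eta)$ from step (i) we conclude $\xi\succeq\eta$ and take $\alpha=\eta$. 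Undoing the normalising power of $\varepsilon_D^+$ turns the dominated integer into the required element of $\OK^+$.

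\textbf{The first statement} is the degenerate instance, taking $\eta=1+\varepsilon_D^+\in\OK^+$ (formally $s_1=s_2=1$): step (i) with $N\ge\Nm_{K/\Q}(1+\varepsilon_D^+)$ forces $\sigma_2(\xi)>\sigma_2(1+\varepsilon_D^+)=1+\sigma_1(\varepsilon_D^+)^{-1}>1$, so step (ii) applies and $\xi\succeq 1$; case (iii) cannot arise. (In fact, running the $\sigma_2(\xi)<1$ alternative directly shows that already $N\ge\sigma_1(\varepsilon_D^+)$ suffices; the stated $\Nm_{K/\Q}(1+\varepsilon_D^+)$ is the cleaner bound matching the ``moreover'' formulation.)

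\textbf{Main obstacle.} There is no deep difficulty here, but two points need care. First, the structural fact that the totally positive units are exactly $\langle\varepsilon_D^+\rangle$ --- elementary, but to be verified in both norm cases --- is what legitimises the normalisation. Second, and more to the point, one must \emph{choose} this particular fundamental domain $1\le\sigma_1(\xi)<\sigma_1(\varepsilon_D^+)$: it is not obvious a priori that a two-element candidate set $\{1,\eta\}$ (together with unit translates) can cover everything above the threshold, nor that the two critical norms come out to be exactly $\Nm_{K/\Q}(1+s_2\varepsilon_D^+)$ and $\Nm_{K/\Q}(s_1+\varepsilon_D^+)$. Checking that the ``$\sigma_2(\xi)$ too small'' configuration is ruled out \emph{precisely} by the bound $\Nm_{K/\Q}(s_1+\varepsilon_D^+)$ (rather than by something slightly larger) is the real content of the argument, and that is the step I would spend the most effort verifying.
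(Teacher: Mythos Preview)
Your argument is correct. The only cosmetic wrinkle is the edge case $s_2=0$ in step~(iii), where your chain $\Nm_{K/\Q}(1+s_2\varepsilon_D^+)>1+s_2\sigma_1(\varepsilon_D^+)$ degenerates to an equality; the conclusion $\sigma_1(\xi)>\sigma_1(\eta)$ still follows since then $\sigma_1(\eta)=s_1<1\le\sigma_1(\xi)$, so nothing is lost.

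The paper, however, organises the proof differently and more transparently. Instead of your multiplicative fundamental domain $1\le\sigma_1(\xi)<\sigma_1(\varepsilon_D^+)$, it invokes the Shintani cone $\mathcal G=\R_0^+\cdot 1+\R_0^+\cdot\varepsilon_D^+$: after multiplying by a suitable totally positive unit one writes $\xi=u_1+u_2\varepsilon_D^+$ with $u_1,u_2\ge 0$. In these additive coordinates the domination conditions become trivial coordinate inequalities ($u_1\ge 1\Rightarrow\xi\succeq 1$, $u_2\ge 1\Rightarrow\xi\succeq\varepsilon_D^+$, and $u_1\ge s_1,\,u_2\ge s_2\Rightarrow\xi\succeq\eta$), and the norm bounds follow at once from monotonicity of each embedding in $(u_1,u_2)$: the complement of the ``dominated'' region is a union of coordinate boxes whose suprema have norms exactly $\Nm_{K/\Q}(1+\varepsilon_D^+)$, respectively $\Nm_{K/\Q}(1+s_2\varepsilon_D^+)$ and $\Nm_{K/\Q}(s_1+\varepsilon_D^+)$. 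Your approach reaches the same endpoint but pays for the embedding-coordinate choice with several ad~hoc inequality manipulations (your steps (i)--(iii)); the paper's choice of coordinates makes those inequalities tautological and explains \emph{why} the two critical norms are what they are, rather than verifying it after the fact.
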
   

\begin{proof}
The proof of this statement is inspired by the method for the determination of indecomposable integers as described in \cite[Section~4]{KT}. 

Let us consider the fundamental domain for the action of multiplication by totally positive units in $\OK^{\times}$ on $\R^{d,+}$. We know that by Shintani's unit theorem (see, e.g. \cite[Theorem~(9.3)]{N99}) we can consider a polyhedric cone as this domain, and a polyhedric cone is a finite disjoint union of simplicial cones. In real quadratic fields, this union is covered by $\mathcal{G}=\R_0^++\R_0^+\varepsilon_D^+$. Therefore, there exists a totally positive unit $\varepsilon$ such that $\varepsilon\xi\in\mathcal{G}$.

Let us take $u_1+u_2\varepsilon_D^+\in\mathcal{G}$, i.e. $u_1,u_2\in\R_0^+$. If $u_1\geq 1$ or $u_2\geq 1$, then the element $u_1+u_2\varepsilon_D^+$ is totally greater than or equal to $1$ or $\varepsilon_D^+$, which can thus be taken as $\alpha$ in the statement. Therefore, it is enough to bound the norm of elements with $0\leq u_1,u_2<1$, which gives $\Nm_{K/\Q}(1+\varepsilon_D^+)$ from the first part of the lemma.

Similarly, if there is an algebraic integer $\eta=s_1+s_2\varepsilon_D^+$ with $s_1,s_2\in[0,1)$ and, moreover, we have $u_1\geq s_1$ and $u_2\geq s_2$, we obtain $u_1+u_2\varepsilon_D^+\succeq \eta$. The norm of the remaining elements in $\mathcal{G}$ can then be bounded by
\[
\max\left(\Nm_{K/\Q} (1+s_2\varepsilon_D^+),~\Nm_{K/\Q}(s_1+\varepsilon_D^+)\right).\qedhere
\]   
\end{proof} 

\subsubsection{Algorithm}\label{sec:algo}
We now describe the algorithm that is used to find all additively indecomposable binary quadratic forms for concrete quadratic fields. Of course, up to equivalences, we get only finitely many such forms. The following lemma gives some conditions under which two binary quadratic forms are equivalent:

\begin{lemma} \label{lem:equiv_triv}

Let $Q(x,y)=\alpha x^2+\beta xy+\eta y^2 \in \OK[x,y]$.

\begin{enumerate}

\item If $\varepsilon$ is a unit in $\OK$, then $Q$ is equivalent to the quadratic form $\alpha x^2+\varepsilon\beta xy+\varepsilon^2\eta y^2$ with the determinant $\varepsilon^2\det(Q)$. \label{lem:equiv_triv1}

\item If $\varepsilon$ is a unit in $\OK$, then $Q$ is equivalent to the quadratic form $\varepsilon^{-2}\alpha x^2+\beta xy+\varepsilon^2\eta y^2$ with the determinant $\det(Q)$. \label{lem:equiv_triv2} 

\end{enumerate}

\end{lemma}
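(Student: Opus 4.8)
The plan is to exhibit, in each case, an explicit diagonal change of variables and verify directly that it is an $\OK$-equivalence inducing the stated transformation of coefficients and determinant; no deeper idea is needed, and I expect the only points requiring care to be the bookkeeping that the transition matrix and its inverse have entries in $\OK$ (which holds since $\varepsilon^{\pm 1}\in\OK^{\times}$) and that the construction respects the classical/non-classical distinction.

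For the first claim, I would substitute $x\mapsto x$ and $y\mapsto \varepsilon y$ into $Q$:
\[
Q(x,\varepsilon y)=\alpha x^2+\varepsilon\beta\, xy+\varepsilon^2\eta\, y^2,
\]
which is exactly the asserted form. On the level of Gram matrices this substitution is conjugation by $N=\operatorname{diag}(1,\varepsilon)$, that is $M_H=NM_QN^t$, so $\det(H)=\det(N)^2\det(Q)=\varepsilon^2\det(Q)$. For the second claim, I would use instead $x\mapsto \varepsilon^{-1}x$, $y\mapsto\varepsilon y$, so that
\[
Q(\varepsilon^{-1}x,\varepsilon y)=\varepsilon^{-2}\alpha\, x^2+\beta\, xy+\varepsilon^2\eta\, y^2;
\]
here the transition matrix is $N=\operatorname{diag}(\varepsilon^{-1},\varepsilon)$, with $\det(N)=1$, so the determinant is unchanged. (Equivalently, the second claim is the first one followed by the further substitution $x\mapsto\varepsilon^{-1}x$.)

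It then remains to confirm that these are genuine equivalences in the sense of the preliminaries. Since $\varepsilon\in\OK^{\times}$ we have $\varepsilon^{-1}\in\OK$, so in both cases $N\in\OK^{2\times 2}$, and $\det(N)\in\{\varepsilon,1\}\subset\OK^{\times}$, hence $N$ is invertible over $\OK$ with $N^{-1}\in\OK^{2\times 2}$; thus $Q$ and the transformed form represent one another. The transformed form is again totally positive (semi-)definite because an invertible $K$-linear change of variables preserves positive (semi-)definiteness under every embedding. Finally, if $Q$ is classical then $\frac{\varepsilon\beta}{2},\varepsilon^2\eta,\varepsilon^{-2}\alpha\in\OK$, so the transformed form is classical, and the same computation applies verbatim in the non-classical category; hence the lemma holds for both types of forms. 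Since everything reduces to these one-line substitutions, I do not anticipate a genuine obstacle — the statement is essentially a normalisation bookkeeping lemma that will be used to put the binary forms arising in Section~\ref{sec:concrete} into a canonical shape.
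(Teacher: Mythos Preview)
Your proposal is correct: the diagonal substitutions $y\mapsto\varepsilon y$ and $(x,y)\mapsto(\varepsilon^{-1}x,\varepsilon y)$ are exactly the right equivalences, and your verification that the transition matrices lie in $\OK^{2\times 2}$ with unit determinant is all that is needed. The paper in fact states this lemma without proof, treating it as a routine normalisation; your write-up simply spells out the obvious argument.
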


This implies that it is enough to consider determinants and integers $\alpha$ with $\Nm_{K/\Q}(\alpha)=\min(Q)$ up to multiplication by squares of units. Thus, we fix $\det(Q)$ and $\alpha$ with $\Nm_{K/\Q}(\alpha)=\min(Q)$ and consider quadratic forms of the form $\alpha x^2+2\beta xy+\eta y^2$ or $\alpha x^2+\beta xy+\eta y^2$. The next step is the possible values of $\beta$ and $\eta$. The determinant formula requires that $\eta=\frac{\det(Q)+\beta^2}{\alpha}$ for classical forms or $\eta=\frac{4\det(Q)+\beta^2}{4\alpha}$ for non-classical ones, with $\eta \in \OK$.

\begin{remark}

In the following, we put $J=1$ if we consider only classical quadratic forms, and $J=2$ if we also include non-classical forms.   

\end{remark}

The following lemma gives us possible values of $\beta$.

\begin{lemma} \label{lem:congruenceconditions}

Let $\Q(\sqrt{D})$ where $D>1$ is square-free and $J$ as above. Let us put

$\det(Q)=\frac{d_1+d_2\omega_D}{J^2}$, $\alpha=m_1+m_2\omega_D$ and $\beta=b_1+b_2\omega_D$

where $d_i,m_i,b_i\in\Z$.

Then $\eta=\frac{J^2\det(Q)+\beta^2}{J^2\alpha}$ is an algebraic integer if and only if $b_1$ and $b_2$ are solutions of the following modular equations:

\begin{enumerate}

\item if $D\equiv 2,3\pmod{4}$, then 
\begin{align*} 
m_1 (d_1 + b_1^2 + D b_2^2) &\equiv m_2 D (d_2 + 2 b_1 b_2)  \pmod{J^2\min(Q)},\\ m_2 (d_1 + b_1^2 + D b_2^2) &\equiv m_1 (d_2 + 2 b_1 b_2) \pmod{J^2\min(Q)}, 
\end{align*}

\item if $D\equiv 1\pmod{4}$, then 

\begin{align*}
(m_1 + m_2) \left(d_1 + b_1^2 + \frac{D - 1}{4} b_2^2\right) &\equiv 
 m_2\frac{D - 1}{4} (d_2 + 2 b_1 b_2 + b_2^2) \pmod{J^2\min(Q)},\\
 m_2 \left(d_1 + b_1^2 + \frac{D - 1}{4} b_2^2\right) &\equiv 
 m_1 (d_2 + 2 b_1 b_2 + b_2^2) \pmod{J^2\min(Q)}. 
 \end{align*}
 \end{enumerate}
\end{lemma}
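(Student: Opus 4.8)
The plan is to translate the integrality requirement ``$\eta\in\OK$'' into explicit congruences by writing everything in the standard $\Z$-basis $\{1,\omega_D\}$ of $\OK$ and tracking when the numerator of $\eta$ is divisible by $J^2\alpha$ in $\OK$. First I would recall that an element $\gamma\in K$ lies in $\OK$ precisely when, after expressing $\gamma\alpha$ in the basis $\{1,\omega_D\}$, the product $\gamma\cdot(J^2\alpha)$ equals $J^2\det(Q)+\beta^2$; so the condition ``$\eta=\frac{J^2\det(Q)+\beta^2}{J^2\alpha}\in\OK$'' is equivalent to the existence of $\gamma\in\OK$ with $\gamma\cdot J^2\alpha = J^2\det(Q)+\beta^2$, i.e. to the divisibility $J^2\alpha \mid J^2\det(Q)+\beta^2$ in $\OK$.

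Next I would compute $\beta^2=(b_1+b_2\omega_D)^2$ using the two multiplication rules $\omega_D^2=D$ (when $D\equiv 2,3\pmod 4$) or $\omega_D^2=\omega_D+\frac{D-1}{4}$ (when $D\equiv 1\pmod 4$), and add $J^2\det(Q)=d_1+d_2\omega_D$, obtaining the numerator $N=N_1+N_2\omega_D$ with $N_1,N_2\in\Z$ given by the bracketed expressions appearing in the statement (e.g. $N_1=d_1+b_1^2+Db_2^2$, $N_2=d_2+2b_1b_2$ in the first case). Then I would use the standard fact that for $\alpha=m_1+m_2\omega_D$ with $\Nm_{K/\Q}(\alpha)=:M$ (note $M=\min(Q)$ by our normalisation), one has $\alpha\mid N$ in $\OK$ if and only if $\bar\alpha N/ M \in\OK$, where $\bar\alpha$ is the conjugate; equivalently, $\alpha\mid N$ iff $M\mid \bar\alpha N$ coordinatewise in the basis $\{1,\omega_D\}$. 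Computing $\bar\alpha N$ and reading off its two coordinates modulo $J^2 M$ yields exactly the two displayed systems of congruences — in the $D\equiv 2,3\pmod 4$ case using $\bar\alpha=m_1-m_2\omega_D$ and $\omega_D^2=D$, and in the $D\equiv 1\pmod 4$ case using $\bar\alpha=(m_1+m_2)-m_2\omega_D$ and $\omega_D\bar\omega_D=\frac{1-D}{4}$, $\omega_D+\bar\omega_D=1$.

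I would then verify the converse direction: if the displayed congruences hold, then $\bar\alpha N$ is divisible by $J^2M$ in $\OK$, hence $N$ is divisible by $J^2\alpha$ (since $J^2\alpha\cdot\bar\alpha=J^2M$ and $\OK$ has unique factorisation of ideals, or more elementarily since $\frac{\bar\alpha N}{J^2M}\in\OK$ implies $\frac{N}{J^2\alpha}=\frac{\bar\alpha N}{J^2 M}\in\OK$), which is precisely $\eta\in\OK$. Finally I would note that by Lemma~\ref{lem:equiv_triv} and the surrounding discussion the normalisation $\Nm_{K/\Q}(\alpha)=\min(Q)$ is harmless, so the congruences genuinely enumerate all admissible $\beta$.

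The main obstacle is bookkeeping rather than conceptual: one must be careful with the two different multiplication tables for $\omega_D$, with the placement of the factor $J^2$ (it scales both $\det(Q)$ and the modulus but not $\beta$ or $\alpha$), and with the fact that the modulus is $J^2\min(Q)=J^2\Nm_{K/\Q}(\alpha)$ rather than, say, $J^2\alpha$ — getting the coordinate extraction of $\bar\alpha N$ right in the $D\equiv 1\pmod 4$ case (where $\bar\omega_D=1-\omega_D$ introduces the extra $(m_1+m_2)$ and $b_2^2$ terms) is the fiddly step. No deep input is needed; the only external facts used are the description of $\OK$ recalled at the start of the section and the elementary divisibility criterion $\alpha\mid N \iff J^2\Nm(\alpha)\mid \bar\alpha N$ in $\OK$.
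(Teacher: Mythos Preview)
Your proposal is correct and is essentially the same computation as the paper's, packaged slightly differently: the paper writes $\eta=h_1+h_2\omega_D$, forms the $2\times 2$ linear system from $J^2\det(Q)+\beta^2=J^2\alpha\eta$, and eliminates $h_1,h_2$ by multiplying the two equations by $(m_1,-m_2D)$ and $(-m_2,m_1)$ respectively---which is precisely your ``multiply by $\bar\alpha$ and read off coordinates modulo $J^2\Nm(\alpha)$''. The only minor slip is in your final sentence, where the criterion should read $J^2\alpha\mid N \iff J^2\Nm(\alpha)\mid \bar\alpha N$ (the $J^2$ belongs on the left as well), but your earlier discussion makes clear you have this right.
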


\begin{proof}

We will show the proof only for $D\equiv 2,3\pmod{4}$ and $J=1$; the proof for $D\equiv 1\pmod{4}$ or $J=2$ is analogous. Let $\eta=h_1+h_2\sqrt{D}$ for some $h_1,h_2\in \Z$. Then $\det(Q)+\beta^2=\alpha\eta$, which gives

\begin{align}
d_1+b_1+b_2^2D&=m_1h_1+m_2h_2D, \label{eq:1coef}\\
d_2+2b_1b_2&=m_2h_1+m_1h_2. \label{eq:2coef}
\end{align}

If we multiply the first equation by $m_1$ and sum it up with the second one multiplied by $-m_2D$, we obtain

\begin{equation} \label{eq:g1}
m_1(d_1+b_1+b_2^2D)-m_2D(d_2+2b_1b_2)=(m_1^2-m_2^2D)h_1=\min(Q)h_1.
\end{equation}

From this, we see that $h_1$ is a rational integer only if the left side of (\ref{eq:g1}) is divisible by $\min(Q)$, which gives the first part of the statement. Similarly, if we multiply (\ref{eq:1coef}) by $-m_2$ and sum it up with (\ref{eq:2coef}) multiplied by $m_1$, we obtain an analogous equation for $h_2$, which produces the second part of the assertion.
\end{proof} 

Therefore, Lemma~\ref{lem:congruenceconditions} gives us congruence conditions for $\beta$, and then $\eta$ is fully determined by $\beta$. However, we still get infinitely many candidates for $\beta$. To restrict ourselves to a finite set, we can use the following lemma:

\begin{lemma} \label{lem:congrequiv}

Let $D$, $J$, $\det(Q)$, $\alpha$ and $\beta$ be as in Lemma~\ref{lem:congruenceconditions}. Moreover, let $\beta$ and $\overline{\beta}=\overline{b_1}+\overline{b_2}\omega_D$ be such that they satisfy the conditions in Lemma~\ref{lem:congruenceconditions}. Then the quadratic forms $Q(x,y)=\alpha x^2+\frac{2}{J}\beta xy+\frac{J^2\det(Q)+\beta^2}{J^2\alpha}y^2$ and $\overline{Q}(x,y)=\alpha x^2+\frac{2}{J}\overline{\beta} xy+\frac{J^2\det(Q)+\overline{\beta}^2}{J^2\alpha}y^2$ are equivalent if

\begin{align*}
(m_1 b_1 - m_2 D b_2)&\equiv(m_1 \overline{b_1} - m_2 D \overline{b_2}) \pmod{J\min(Q)},\\
(-m_2 b_1 + m_1 b_2)&\equiv (-m_2 \overline{b_1} +m_1 \overline{b_2}) \pmod{J\min(Q)}
\end{align*}

for $D\equiv 2,3\pmod{4}$, and 
\begin{align*}
(m_1 + m_2)(b_1-\overline{b_1})&\equiv m_2\frac{D-1}{4}(b_2-\overline{b_2})\pmod{J\min(Q)},\\
(-m_2b_1 + m_1 b_2)&\equiv(-m_2 \overline{b_1} +m_1 \overline{b_2})  \pmod{J\min(Q)}
\end{align*}

for $D\equiv 1\pmod{4}$.  

\end{lemma}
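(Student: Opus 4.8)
The plan is to exhibit an explicit equivalence from $Q$ to $\overline Q$ realised by a single ``shear''. For $\lambda\in\OK$, the substitution $x\mapsto x+\lambda y$, $y\mapsto y$ transforms the Gram matrix $M_Q$ into $NM_QN^t$ for the unimodular integral matrix $N=\begin{pmatrix}1&0\\ \lambda&1\end{pmatrix}$, so $Q(x+\lambda y,y)$ is equivalent to $Q$ and has the same determinant. Expanding,
\[
Q(x+\lambda y,y)=\alpha x^2+\tfrac{2}{J}\bigl(J\alpha\lambda+\beta\bigr)xy+\Bigl(\alpha\lambda^{2}+\tfrac{2}{J}\beta\lambda+\eta\Bigr)y^{2},
\]
where $\eta=\frac{J^2\det(Q)+\beta^2}{J^2\alpha}$ is the $y^2$-coefficient of $Q$. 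Taking $\lambda=\dfrac{\overline\beta-\beta}{J\alpha}$ makes the cross term equal to $\tfrac{2}{J}\overline\beta$; since the leading coefficient stays $\alpha$ and the determinant is unchanged, comparing determinants forces the $y^2$-coefficient of the transformed form to equal $\frac{J^2\det(Q)+\overline\beta^{\,2}}{J^2\alpha}$, i.e.\ the transformed form is precisely $\overline Q$. Thus everything reduces to checking that the stated congruences guarantee $\lambda=\dfrac{\overline\beta-\beta}{J\alpha}\in\OK$.

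For that, I would write $\alpha'$ for the conjugate of $\alpha$ and use $\alpha\alpha'=\Nm_{K/\Q}(\alpha)=\min(Q)$ (here $\alpha$ is totally positive, so $\alpha'\neq0$), so that
\[
\lambda=\frac{\overline\beta-\beta}{J\alpha}=\frac{(\overline\beta-\beta)\alpha'}{J\min(Q)}.
\]
Hence $\lambda\in\OK$ if and only if the element $(\overline\beta-\beta)\alpha'\in\OK$ is divisible in $\OK$ by the rational integer $J\min(Q)$, and because $\OK=\Z\oplus\Z\omega_D$ this is equivalent to both coordinates of $(\overline\beta-\beta)\alpha'$ with respect to the basis $\{1,\omega_D\}$ being divisible by $J\min(Q)$.

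The last step is the routine computation of these two coordinates. For $D\equiv2,3\pmod4$ one has $\alpha'=m_1-m_2\sqrt D$ and $\omega_D^2=D$, which gives
\[
(\beta-\overline\beta)\alpha'=\bigl(m_1(b_1-\overline{b_1})-m_2D(b_2-\overline{b_2})\bigr)+\bigl(m_1(b_2-\overline{b_2})-m_2(b_1-\overline{b_1})\bigr)\sqrt D;
\]
for $D\equiv1\pmod4$ one has $\alpha'=(m_1+m_2)-m_2\omega_D$ and $\omega_D^2=\omega_D+\tfrac{D-1}{4}$, which gives
\[
(\beta-\overline\beta)\alpha'=\Bigl((m_1+m_2)(b_1-\overline{b_1})-\tfrac{D-1}{4}m_2(b_2-\overline{b_2})\Bigr)+\bigl(m_1(b_2-\overline{b_2})-m_2(b_1-\overline{b_1})\bigr)\omega_D.
\]
Requiring $J\min(Q)$ to divide each coordinate yields exactly the two displayed congruences in each case, so the hypotheses of the lemma give $\lambda\in\OK$ and the equivalence follows. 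I do not anticipate a genuine obstacle here: the only things to watch are the $\tfrac{2}{J}$ normalisation of the cross term, the identity $\alpha\alpha'=\min(Q)$, and keeping the two integral-basis cases straight.
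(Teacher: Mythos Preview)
Your proof is correct and follows essentially the same approach as the paper: both use the unimodular shear $U=\begin{pmatrix}1&0\\ (\overline{\beta}-\beta)/(J\alpha)&1\end{pmatrix}$ and reduce the lemma to verifying that $(\overline{\beta}-\beta)/(J\alpha)\in\OK$. The paper leaves that verification as ``similarly as in the proof of Lemma~\ref{lem:congruenceconditions}'', whereas you carry it out explicitly via the identity $\alpha\alpha'=\min(Q)$ and the coordinate computation of $(\beta-\overline{\beta})\alpha'$ in the basis $\{1,\omega_D\}$, which is exactly what that reference amounts to.
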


\begin{proof}

Let $M_Q$ and $M_{\overline{Q}}$ be the Gram matrices of the quadratic forms $Q$ and $\overline{Q}$, respectively. Let us consider

\[
U=\left(\begin{matrix}

1 & 0\\

\frac{\overline{\beta}-\beta}{J\alpha} & 1

\end{matrix}\right).
\]

This matrix has a determinant of norm $1$, and under assumptions of the lemma, $\frac{\overline{\beta}-\beta}{J\alpha}$ is an algebraic integer, which can be shown similarly as in the proof of Lemma~\ref{lem:congruenceconditions}. Then, it is easy to prove that $UM_{Q}U^{t}=M_{\overline{Q}}$, which gives the desired conclusion.
\end{proof}

In particular, Lemma~\ref{lem:congrequiv} says that it is enough to consider only some of the integers $\beta$ that have $0\leq b_1,b_2\leq J\min(Q)-1$. 

Having a candidate additively indecomposable quadratic form, we need to check whether it is additively indecomposable. In real quadratic fields, for $\delta\in\OK^{+}$, it is relatively easy to compute all $\omega\in\OK^+$ that are $\delta\succ\omega$.

To simplify some steps, we used the following procedure to check if a quadratic form $\alpha x^2+\beta xy +\eta y^2$ is additively indecomposable:

\begin{enumerate}

\item For all $0\prec \overline{\alpha} \prec \alpha$ check whether $(\alpha -\overline{\alpha})x^2+\beta xy+\eta y^2$ is totally positive semi-definite.

\item For all $0\prec \overline{\eta} \prec \eta$ check whether $\alpha x^2+\beta xy+(\eta-\overline{\eta}) y^2$ is totally positive semi-definite.

\item For all $0\prec \overline{\alpha} \prec \alpha$, $0\prec \overline{\eta} \prec \eta$ and $\overline{\beta}\in\OK$ such that $\overline{\beta}^2\preceq 4\overline{\alpha}\overline{\eta}$ check whether $(\alpha -\overline{\alpha})x^2+(\beta-\overline{\beta}) xy+(\eta-\overline{\eta}) y^2$ is totally positive semi-definite.   

\end{enumerate}

If in Steps 1 or 2 we find a decomposition, we stop and, in this way, we eliminate the simple cases. Step 3 is the most time-consuming, as we essentially check all possible decompositions by brute force. 

Now, we have introduced all the tools we needed to implement a code that finds all additively indecomposable binary quadratic forms in concrete quadratic fields. Let $\varepsilon_D$ be the fundamental unit of $\Q(\sqrt{D})$, and let $\varepsilon_{D}^{+}=\varepsilon_D$ if $\Nm_{K/\Q}(\varepsilon_D)=1$, and $\varepsilon_{D}^{+}=\varepsilon_D^2$ otherwise. We mainly describe the algorithm for classical quadratic forms; the small changes for non-classical forms are written in italics. We proceed as follows:

\begin{enumerate}

\item Let $B$ be an upper bound on the norm of the determinant of an additively indecomposable quadratic form. We want to set $B=\gamma_{K,2}^2C^2$, where $C$ is from Theorem~\ref{thm:main}. We can use it if the value of $\gamma_{K,2}$ is known. Otherwise, we use an upper bound for $\gamma_{K,2}$, i.e. $\frac{1}{2}\Delta_K$. For a concrete quadratic field, the minimum value of $C$ can be bounded using Lemma~\ref{lemma:qua_bound_norm}.

\item Let $\mathcal{P}$ be a set of representatives of all totally positive integers up to norm $B$ in $\OK$, which is easily computable, say, using Mathematica. \textit{For non-classical quadratic forms, we find all algebraic integers with norm up to $16B$. Then, we work with $\det(Q)=\frac{\tilde{d}}{4}$ or $\tilde{d}$ when suitable.}

\item Set $\mathcal{D}=\mathcal{P}$ if $\Nm_{K/\Q}(\varepsilon_D)=-1$, and $\mathcal{D}=\mathcal{P}\cup\{\omega\varepsilon_D\mid\omega\in \mathcal{P}\}$ otherwise. The set $\mathcal{D}$ includes all possible determinants of additively indecomposable binary quadratic forms up to the equivalence given by Lemma~\ref{lem:equiv_triv}(\ref{lem:equiv_triv1}). It also includes all integers whose norm can be the minimum of additively indecomposable binary quadratic forms up to the equivalence given by Lemma~\ref{lem:equiv_triv}(\ref{lem:equiv_triv2}).  

\item For all $\psi\in \mathcal{D}$, let $\mathcal{M}_{\psi}$ be the set of those elements $\alpha\in \mathcal{D}$ for which

\begin{enumerate}

\item $\frac{\Nm_{K/\Q}(\psi)}{C}<\Nm_{K/\Q}(\alpha)\leq \gamma_{K,2}\sqrt{\Nm_{K/\Q}(\psi)}$; otherwise, if $\Nm_{K/\Q}(\alpha)=\min(Q)$, our quadratic form is additively decomposable by Lemma~\ref{lemma:decom_conds}(\ref{lemma:decom_conds1}), or $\Nm_{K/\Q}(\alpha)$ is not a minimum of our quadratic form,

\item $\alpha$ does not divide any $\delta\in\OK^{+}$ such that $\delta\preceq \psi$; otherwise, our quadratic form is additively decomposable by Lemma~\ref{lemma:decom_conds}(\ref{lemma:decom_conds2}). \textit{Note that we can use the same procedure even if the determinant is not an algebraic integer.}    

\end{enumerate}

\item Then for all pairs $(\psi,\alpha)\in \mathcal{D}\times \mathcal{M}_{\psi}$, do the following:

\begin{enumerate}

\item Find the set $\mathcal{B}_{\psi,\alpha}$ of all possible $\beta\in\OK$ given by Lemmas~\ref{lem:congruenceconditions} and~\ref{lem:congrequiv}. When we choose a representative $(b_1,b_2)$ from the congruence classes given by Lemma~\ref{lem:congrequiv}, it is suitable to establish some criteria to simplify some of the next steps. For example, choose some $\beta$ for which the trace of $\frac{\psi+\beta^2}{\alpha}$ is the smallest. \textit{In Lemmas~\ref{lem:congruenceconditions} and~\ref{lem:congrequiv}, we consider $J=1$ for classical forms, and $J=2$ for non-classical forms.}

\item For all $\beta\in \mathcal{B}_{\psi,\alpha}$, do the following:

\begin{enumerate}

\item Check if $\Nm_{K/\Q}(\alpha)\leq \Nm_{K/\Q}\left(\frac{\psi+\beta^2}{\alpha}\right)$. Otherwise, $\Nm_{K/\Q}(\alpha)$ is not the minimum of the corresponding quadratic form, and thus this case is covered by some other case. \textit{For non-classical quadratic forms, we consider $\eta$ in the corresponding form.}

\item Check indecomposability of $Q$ given by $\psi=\det(Q)$, $\alpha$ with $\Nm_{K/\Q}(\alpha)=\min(Q)$ and $\beta$. For that, use the procedure described above.

\end{enumerate}

\end{enumerate}

\item In the end, we obtain a finite set of representatives of additively indecomposable binary quadratic forms over $\mathcal{O}_{\Q(\sqrt{D})}$. Some of them can still be equivalent. However, for a finite set of quadratic forms, it is not so difficult to decide which of them are not mutually equivalent, for which we used Magma.      
\end{enumerate}

Our code implemented in Mathematica is available at 

\begin{center}

https://sites.google.com/view/tinkovamagdalena/codes.    

\end{center}

Note that our code can be easily modified to provide all totally positive definite binary quadratic forms $Q$ (up to equivalence) with $\Nm_{K/\Q}(\det(Q))$ less than some bound. Moreover, from this set, it is not difficult to exclude all forms such that $Q=Q_1+Q_2$ where $\det(Q_1)=0$ and $Q_2$ is totally positive semi-definite. We use it to provide some upper bounds on the number of variables of $2$-universal quadratic forms. 

\subsection{Results for concrete quadratic fields}

Using our code, we were able to find all representatives of additively indecomposable binary quadratic forms for several quadratic fields. This was possible for fields where we have a good upper bound on the generalised Hermite constant $\gamma_{K,2}$. Otherwise, the number of integers and forms that need to be examined becomes large. Computationally, dealing only with classical forms is much easier. Thus, for general integral quadratic forms, we provide results only for $\Q(\sqrt{2})$, $\Q(\sqrt{3})$, and $\Q(\sqrt{5})$.

\subsubsection{$D=2$}
For $\Q(\sqrt{2})$, we know that $\mathcal{O}_{\Q(\sqrt{2})}=\Z[\sqrt{2}]$. Moreover, up to multiplication by totally positive units, we have two indecomposable integers, namely $1$ of norm $1$ and $2+\sqrt{2}$ of norm $2$. Moreover, $\varepsilon_2^+=3+2\sqrt{2}$ and $2+\sqrt{2}=\frac{1}{2}+\frac{1}{2}\varepsilon_2^+$. Therefore, Lemma~\ref{lemma:qua_bound_norm} gives that we can take $C$ as 
\[
 \Nm_{\Q(\sqrt{2})/\Q}\left(\frac{1}{2}+\varepsilon_2^+\right)=\Nm_{\Q(\sqrt{2})/\Q}\left(1+\frac{1}{2}\varepsilon_2^+\right)=4+\frac{1}{4}.
\]
We also know the exact value of $\gamma_{\Q(\sqrt{2}),2}$, which is $\frac{4}{2\sqrt{6}-3}$ by the result of Baeza, Coulangeon, Icaza, and O'Ryan~\cite{BCIO}.

\begin{theorem}
Up to equivalence, the only classical, additively indecomposable quadratic form of a non-zero determinant over $\Z[\sqrt{2}]$ is $(2+\sqrt{2})x^2+2xy+(2-\sqrt{2})y^2$ of determinant $1$.
\end{theorem}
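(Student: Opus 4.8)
The plan is to run the algorithm of Subsection~\ref{sec:algo} with the inputs specialised to $K=\Q(\sqrt2)$, and to argue that the resulting search is small enough to exhaust by hand or machine, leaving exactly the displayed form. First I would fix the determinant bound. Since the exact value $\gamma_{\Q(\sqrt2),2}=\frac{4}{2\sqrt6-3}$ is known by \cite{BCIO}, and Lemma~\ref{lemma:qua_bound_norm} lets us take $C=4+\tfrac14$ (because $2+\sqrt2=\tfrac12+\tfrac12\varepsilon_2^{+}$), Theorem~\ref{thm:main1} shows that any additively indecomposable classical $Q$ satisfies $\Nm_{K/\Q}(\det Q)<\gamma_{K,2}^2C^2=\frac{289}{33-12\sqrt6}<81$. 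Hence it suffices to examine forms with $\Nm_{K/\Q}(\det Q)\le 80$. By Lemma~\ref{lem:equiv_triv}(\ref{lem:equiv_triv1}) the determinant may be taken up to multiplication by squares of units, and since $\Nm_{K/\Q}(\varepsilon_2)=-1$ this means letting $\det Q$ range over a finite, explicitly listable set $\mathcal D$ of totally positive integers of norm at most $80$.

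Next, for each candidate $\psi\in\mathcal D$ I would restrict the leading coefficient. After a basis change we may assume $\Nm_{K/\Q}(\alpha)=\min(Q)$ where $\alpha$ is the $x^2$-coefficient, and then Lemma~\ref{lemma:decom_conds} forces $\Nm_{K/\Q}(\psi)/C<\Nm_{K/\Q}(\alpha)\le\gamma_{K,2}\sqrt{\Nm_{K/\Q}(\psi)}$ together with the requirement that $\alpha$ divide no totally positive $\delta\preceq\psi$; any form violating these is additively decomposable or does not realise its minimum at $\alpha$. For each surviving pair $(\psi,\alpha)$, Lemma~\ref{lem:congruenceconditions} (with $J=1$) cuts the middle coefficient $\beta$ down to finitely many residue classes modulo $\min(Q)$, and Lemma~\ref{lem:congrequiv} shows it is enough to take one representative with $0\le b_1,b_2<\min(Q)$ from each class; then $\eta=\frac{\det Q+\beta^2}{\alpha}$ is determined, and one discards the cases with $\Nm_{K/\Q}(\alpha)>\Nm_{K/\Q}(\eta)$ as already covered with the roles of $\alpha,\eta$ swapped. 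This leaves a finite, enumerable list of candidate forms.

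Finally, for each candidate I would apply the three-step test of Subsection~\ref{sec:algo}: subtract every $0\prec\bar\alpha\prec\alpha$ from the $x^2$-coefficient, every $0\prec\bar\eta\prec\eta$ from the $y^2$-coefficient, and every admissible triple $(\bar\alpha,\bar\eta,\bar\beta)$ with $\bar\beta^2\preceq 4\bar\alpha\bar\eta$, checking in each case whether the difference stays totally positive semi-definite; $Q$ is additively indecomposable exactly when no such decomposition occurs. Running this over the whole list and then removing equivalences (a finite check, using Lemma~\ref{lem:equiv_triv} and, say, Magma) should leave a single class, represented by $(2+\sqrt2)x^2+2xy+(2-\sqrt2)y^2$. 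That this form is genuinely additively indecomposable also follows a priori from Lemma~\ref{lem:binary}, since $2+\sqrt2$ is indecomposable and $2<2\sqrt{\Nm_{K/\Q}(2+\sqrt2)}=2\sqrt2$, while $\det Q=(2+\sqrt2)(2-\sqrt2)-1=1$.

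The main obstacle is not a conceptual one but controlling the size of the enumeration: one must check that the bound $80$, combined with the restrictions coming from Lemmas~\ref{lemma:decom_conds}, \ref{lem:congruenceconditions} and~\ref{lem:congrequiv}, really does collapse the problem to a manageable list, and that the exhaustive Step~3 test terminates correctly on each member. This is precisely where the sharpened constant $C=4+\tfrac14$ from Lemma~\ref{lemma:qua_bound_norm} and the exact Hermite constant from \cite{BCIO} are indispensable: the generic choice $C=\Delta_K+1=9$ would inflate the determinant range by a factor of roughly $(9/4.25)^2\approx 4.5$, making the brute-force step considerably heavier.
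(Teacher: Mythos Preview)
Your proposal is correct and follows exactly the paper's approach: the paper does not give a separate proof of this theorem but obtains it by running the algorithm of Subsection~\ref{sec:algo} with the constants $\gamma_{\Q(\sqrt2),2}=\frac{4}{2\sqrt6-3}$ and $C=4+\tfrac14$ specified in the paragraph preceding the statement, and you have faithfully reproduced that procedure, including the a~priori indecomposability of the surviving form via Lemma~\ref{lem:binary}.
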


We can draw a similar statement for non-classical quadratic forms.

\begin{theorem}
    Up to equivalence, the additively indecomposable binary quadratic forms of non-zero determinant over $\Z[\sqrt{2}]$ are
\begin{itemize}
\item determinant $\frac{1}{2}$: $x^2+\sqrt{2}xy+y^2$,
\item determinant $\frac{3}{4}$: $x^2+xy+y^2$,
\item determinant $1$: $(2+\sqrt{2})x^2+2xy+(2-\sqrt{2})y^2$, 
\item determinant $\frac{5+2\sqrt{2}}{4}$: $x^2+(1+\sqrt{2})xy+(2+\sqrt{2})y^2$,
\item determinant $\frac{5-2\sqrt{2}}{4}$: $x^2+(1+\sqrt{2})xy+2y^2$,
\item determinant $\frac{3}{2}$: $(2+\sqrt{2})x^2+\sqrt{2}xy+(2-\sqrt{2})y^2$,
\item determinant $\frac{7}{4}$: $(2+\sqrt{2})x^2+xy+(2-\sqrt{2})y^2$.
\end{itemize}   
\end{theorem}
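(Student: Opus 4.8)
The plan is to run the algorithm of Subsection~\ref{sec:algo} for $K=\Q(\sqrt 2)$, taking advantage of the fact that here both the exact value of the Hermite constant, $\gamma_{\Q(\sqrt 2),2}=\frac{4}{2\sqrt 6-3}$, and a sharp admissible constant $C=\frac{17}{4}$ are available; the latter comes from Lemma~\ref{lemma:qua_bound_norm} applied to the indecomposable integer $2+\sqrt 2=\tfrac12+\tfrac12\varepsilon_2^{+}$, so that $C=\Nm_{K/\Q}(1+\tfrac12\varepsilon_2^{+})=\Nm_{K/\Q}(\tfrac12+\varepsilon_2^{+})$. First I would fix the search bound $B=\gamma_{\Q(\sqrt 2),2}^{2}C^{2}$ on $\Nm_{K/\Q}(\det Q)$: by Theorem~\ref{thm:main}, every additively indecomposable binary form has $\Nm_{K/\Q}(\det Q)<B$. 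Since a non-classical form has Gram matrix with half-integral off-diagonal entry, $\det Q=\tilde d/4$ with $\tilde d\in\OK$, so $\Nm_{K/\Q}(\det Q)=\Nm_{K/\Q}(\tilde d)/16$ and the relevant finite set is the totally positive $\tilde d$ with $\Nm_{K/\Q}(\tilde d)\le 16B$. By Lemma~\ref{lem:equiv_triv}(\ref{lem:equiv_triv1}) one only needs these determinants up to squares of units, and up to a twist by $\varepsilon_2$ when $\Nm_{K/\Q}(\varepsilon_2)=1$; but here $\Nm_{K/\Q}(\varepsilon_2)=\Nm_{K/\Q}(1+\sqrt 2)=-1$, so no twist is needed and the set $\mathcal P$ of representatives suffices.

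Next, for each candidate determinant $\psi$ I would enumerate the admissible leading coefficients $\alpha$ (those eligible to be $\min(Q)$): by Lemma~\ref{lemma:decom_conds}(\ref{lemma:decom_conds1}) we need $\Nm_{K/\Q}(\psi)/C<\Nm_{K/\Q}(\alpha)\le\gamma_{K,2}\sqrt{\Nm_{K/\Q}(\psi)}$, and by Lemma~\ref{lemma:decom_conds}(\ref{lemma:decom_conds2}) we may discard any $\alpha$ dividing some totally positive $\delta\preceq\psi$. For each surviving pair $(\psi,\alpha)$, Lemma~\ref{lem:congruenceconditions} (with $J=2$ in the non-classical case) confines $\beta=b_1+b_2\sqrt 2$ to a union of congruence classes modulo $J^{2}\min(Q)$, and Lemma~\ref{lem:congrequiv} shows it suffices to pick one representative $\beta$ with $0\le b_1,b_2<J\min(Q)$ per class; then $\eta=\frac{J^{2}\det(Q)+\beta^{2}}{J^{2}\alpha}$ is forced, and one drops the cases with $\Nm_{K/\Q}(\alpha)>\Nm_{K/\Q}(\eta)$, which are already covered by swapping $x$ and $y$. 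This produces a finite, explicitly computable list of candidate forms.

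Finally each candidate is tested for additive indecomposability via the three-step procedure of Subsection~\ref{sec:algo}: first eliminate the splittings $Q=Q_1+\overline\alpha x^{2}$ and $Q=Q_1+\overline\eta y^{2}$, and then the general splitting $Q=Q_1+(\overline\alpha x^{2}+\overline\beta xy+\overline\eta y^{2})$ by running over all totally positive $\overline\alpha\prec\alpha$, $\overline\eta\prec\eta$ and all $\overline\beta\in\OK$ with $\overline\beta^{2}\preceq 4\overline\alpha\overline\eta$, checking total positive semidefiniteness of $Q_1$ in each case. This is a finite search because, in a real quadratic field, the set of elements of $\OK^{+}$ bounded above by a fixed element of $\OK^{+}$ is finite and effectively listable. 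The forms that survive are collected, and pairwise equivalence is decided using Lemma~\ref{lem:equiv_triv}: forms whose determinants have distinct norms, or more finely are not related by a square of a unit, are automatically inequivalent, and the few remaining potential coincidences are resolved with Magma. I expect the main obstacle to be the exhaustiveness of the third indecomposability step, which is a genuine brute-force scan over a box in $\OK^{+}$ whose size grows with $\min(Q)$ and $\tr_{K/\Q}(\eta)$; the whole argument is therefore only as strong as the correctness of the bound $B$, which is precisely why the sharp values of $\gamma_{K,2}$ and $C$ are essential, together with the (easily overlooked) factor $16$ arising from $\det Q\in\tfrac14\OK$ in the non-classical case.
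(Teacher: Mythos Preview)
Your proposal is correct and follows essentially the same approach as the paper: the theorem is obtained by running the algorithm of Subsection~\ref{sec:algo} with the field-specific constants $\gamma_{\Q(\sqrt 2),2}=\frac{4}{2\sqrt 6-3}$ and $C=\frac{17}{4}$ computed at the start of the $D=2$ subsection, and the paper offers no further proof beyond this. Your account of the steps (including the factor $16$ for non-classical determinants, the use of $\mathcal D=\mathcal P$ since $\Nm_{K/\Q}(1+\sqrt 2)=-1$, and the final Magma equivalence check) matches the paper's description precisely.
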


Now, we will discuss the number of variables of $2$-universal quadratic forms. We know that their minimum number is $6$ in $\Q(\sqrt{2})$, which was proved by Sasaki~\cite{Sa1}, but we can compare his result with our bounds.   

Since $\Q(\sqrt{2})$ has only a few indecomposable integers (which is also true for the following quadratic fields), Theorem~\ref{thm:uni_lowerbound} does not provide us with an interesting lower bound. For the upper bound, here and later, we will repeatedly use several tools. First, using a code, we will find all binary quadratic forms with the norm of the determinant lower than our bound. Then, of course, Theorem~\ref{thm:uni_upperbound} gives us some upper bound on the minimal variables of $n$-universal quadratic forms. But this value can be quite large, and we can refine it by considering the decompositions of additively decomposable quadratic forms we have found.

One situation that can occur is the following. Our quadratic form $Q$ can be expressed as $Q=H_1+\cdots + H_k$ where $H_i$ are non-equivalent, additively indecomposable quadratic forms. Then
\[
\sum_{\alpha\in \mathcal{I}}\alpha(z_{1,\alpha}^2+\cdots+z_{g_{\OK}(2),\alpha}^2)+\sum_{h\in \mathcal{F}^{*}_2}h(x_{h},y_{h}),
\]
where $\mathcal{F}^{\ast}_n$ is $\mathcal{F}^{\mathcal{C}}_n$ or $\mathcal{F}_n$,
represents
\[
\sum_{\alpha\in \mathcal{I}}\alpha(z_{1,\alpha}^2+\cdots+z_{g_{\OK}(2),\alpha}^2)+Q.
\]
Thus, we can exclude $Q$ from consideration when we find an upper bound on the minimum number of variables. Similarly, if we have three quadratic forms $Q_1=H_1+H_2$, $Q_2=H_3+H_4$, and $Q_3=H_5+H_6$ where $H_i$ are additively indecomposable and mutually equivalent, then clearly 
\[
\sum_{\alpha\in \mathcal{I}}\alpha(z_{1,\alpha}^2+\cdots+z_{g_{\OK}(2),\alpha}^2)+H_1+H_2
\]
represents all of $\sum_{\alpha\in \mathcal{I}}\alpha(z_{1,\alpha}^2+\cdots+z_{g_{\OK}(2),\alpha}^2)+Q_1$, $\sum_{\alpha\in \mathcal{I}}\alpha(z_{1,\alpha}^2+\cdots+z_{g_{\OK}(2),\alpha}^2)+Q_2$ and $\sum_{\alpha\in \mathcal{I}}\alpha(z_{1,\alpha}^2+\cdots+z_{g_{\OK}(2),\alpha}^2)+Q_3$. Using that, we can also reduce the necessary number of variables. There can also occur some combination of the above situations, which is not so hard to discuss.

Therefore, using a slight modification of our code, we can show that additively indecomposable quadratic forms are the only binary quadratic forms $Q$ such that $N_{\Q(\sqrt{2})/\Q}(\det(Q))<\gamma_{\Q(\sqrt{2}),2}^2(4+\frac{1}{4})^2$ and $Q\neq Q_1+Q_2$ for any totally positive semi-definite binary quadratic forms such that $\det(Q_1)=0$. Moreover, we know that $g_{\Z[\sqrt{2}]}(2)=5$ in this case by the result of He and Hu \cite{HH}. Then Theorem~\ref{thm:2uni_upperbound2} (where we use our actual knowledge of all forms with $N_{\Q(\sqrt{2})/\Q}(\det(Q))<\gamma_{\Q(\sqrt{2}),2}^2(4+\frac{1}{4})^2$) implies the upper bound $2\cdot 5+2\cdot 1=12$.

We can also discuss non-classical $2$-universal quadratic forms, for which the upper bound on the number of variables is larger due to the high number of additively indecomposable, non-classical binary quadratic forms. Note that in his paper, Sasaki \cite{Sa1} does not consider non-classical forms. Thus, the following result (as well as in the next subsections) is new.

\begin{theorem}
Up to equivalence, the only non-classical binary quadratic forms $Q$ over $\Z[\sqrt{2}]$ with $N_{\Q(\sqrt{2})/\Q}(\det(Q))<\gamma_{\Q(\sqrt{2}),2}^2(4+\frac{1}{4})^2$ such that $Q\neq Q_1+Q_2$ for any totally positive semi-definite binary quadratic forms such that $\det(Q_1)=0$,   are (non-classical) additively indecomposable quadratic forms and
\begin{itemize}
\item $(3-\sqrt{2})x^2+(4+\sqrt{2})xy+(5+3\sqrt{2})y^2$,
\item $(3+\sqrt{2})x^2+(2+3\sqrt{2})xy+(4-\sqrt{2})y^2$.
\end{itemize}
In particular, this gives us the upper bound $24$ on the minimum number of variables of non-classical $2$-universal quadratic forms over $\Z[\sqrt{2}]$. 
\end{theorem}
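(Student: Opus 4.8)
\emph{Proof proposal.} The statement has two halves — a classification of the forms of bounded determinant norm, and the rank bound $24$ — and I would treat them in that order.

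First, the classification. Since $\gamma_{\Q(\sqrt2),2}=\frac{4}{2\sqrt6-3}$ is known exactly by \cite{BCIO}, and by Lemma~\ref{lemma:qua_bound_norm} (applied with $\varepsilon_2^+=3+2\sqrt2$ and $\eta=2+\sqrt2=\tfrac12+\tfrac12\varepsilon_2^+$) one may take $C=4+\tfrac14$, the number $B=\gamma_{\Q(\sqrt2),2}^2C^2$ is explicit. I would run the enumeration of Section~\ref{sec:algo}, but instead of stopping once additive indecomposability fails, record \emph{every} non-classical binary form $Q$ over $\Z[\sqrt2]$ with $\Nm_{K/\Q}(\det Q)<B$ up to the equivalences of Lemma~\ref{lem:equiv_triv}; Lemmas~\ref{lem:congruenceconditions} and~\ref{lem:congrequiv} ensure only finitely many triples $(\det Q,\alpha,\beta)$ need to be checked, and that duplicates are discarded. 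For each surviving $Q$ I would then test whether $Q=Q_1+Q_2$ with $Q_1$ a non-classical totally positive semi-definite binary form of determinant $0$ and $Q_2$ totally positive semi-definite: since $\det Q_1=0$ forces $Q_1$ to have rank $\le1$, and a rank-$1$ such form over the principal ideal domain $\Z[\sqrt2]$ equals $\mu L^2$ for $\mu\in\OK^+$ and $L$ a primitive $\OK$-linear form, the search over $Q_1$ with $M_{Q_1}\preceq M_Q$ is finite. Discarding the forms admitting such a decomposition leaves, by the classification theorem of the previous subsection, the seven non-classical additively indecomposable binary forms together with the two listed forms (each being additively decomposable, but only into summands of strictly positive determinant); a final run of Magma on this finite list confirms non-equivalence.

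Next, the bound. Write $\mathcal I=\{1,\,2+\sqrt2\}$ (so $\#\mathcal I=2$), recall $g_{\Z[\sqrt2]}(2)=5$ by He--Hu \cite{HH}, and let $h_1,\dots,h_7$ be the seven non-classical additively indecomposable binary forms. I would prove that
\[
\Phi \;=\; \sum_{\mu\in\mathcal I}\mu\bigl(z_{1,\mu}^2+\cdots+z_{5,\mu}^2\bigr)\;+\;\sum_{i=1}^{7}h_i(x_{h_i},y_{h_i}),
\]
a form of rank $2\cdot5+2\cdot7=24$, is $2$-universal. Given a totally positive definite binary $Q$, argue by induction on $\Nm_{K/\Q}(\det Q)$. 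If $\Nm_{K/\Q}(\det Q)\ge B$, Theorem~\ref{thm:main1} writes $Q=H+\alpha L^2$ with $\alpha\in\OK^+$ and $\Nm_{K/\Q}(\det H)<\Nm_{K/\Q}(\det Q)$; writing $\alpha$ as a sum of indecomposable integers, each of which (up to a square of a unit) lies in $\mathcal I$, the term $\alpha L^2$ becomes a sum of scaled squares with scalars in $\mathcal I$, and we recurse on $H$. If $\Nm_{K/\Q}(\det Q)<B$, then $Q$ is, up to equivalence, either one of the $h_i$, or $\mu L^2+Q'$ with a genuine determinant-$0$ summand (peel it off, recurse on $Q'$), or one of the two extra forms, which splits as $Q'+Q''$ with both determinants strictly positive and both determinant norms strictly smaller. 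Iterating, $Q$ becomes a sum of scaled squares $\sum_j\nu_jM_j^2$ with $\nu_j\in\mathcal I$ together with at most one $h_i$. Grouping the scaled squares by the value of $\nu\in\mathcal I$, each group is $\nu$ times a sum of squares of $2$-ary $\OK$-linear forms, hence — by $g_{\Z[\sqrt2]}(2)=5$ — $\nu$ times a sum of at most five such squares, represented by $\nu(z_{1,\nu}^2+\cdots+z_{5,\nu}^2)$; the leftover $h_i$ is represented by its own summand of $\Phi$. Thus $\Phi$ represents $Q$, giving the bound $24$.

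The hard part is the bookkeeping at the end: one must be certain that a single copy of each $h_i$, together with exactly the $g$-invariant budget of scaled squares, always suffices — in particular that the recursion applied to the two extra forms never forces two copies of the same additively indecomposable form nor overruns the sum-of-squares budget. This is precisely where the full classification of forms of determinant norm below $B$ pays off: it reduces the claim to the finite verification that each of the two extra forms (and hence, after peeling determinant-$0$ pieces, every form below the bound) is represented by $\Phi$. Termination of the determinant-$0$ peeling also deserves a remark — if removing a nonzero rank-$1$ piece fails to lower $\Nm_{K/\Q}(\det(\cdot))$, the Minkowski determinant inequality forces the remaining form to already be rank $1$, i.e.\ itself a scaled square — but this is routine, as is the (sizeable but finite) enumeration, whose bounds come from Lemmas~\ref{lem:congruenceconditions}--\ref{lem:congrequiv} and~\ref{lemma:qua_bound_norm}.
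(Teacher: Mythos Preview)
Your approach is correct and essentially matches the paper's: the classification is obtained by the computational enumeration of Section~\ref{sec:algo}, and the bound $24$ follows once one checks that each of the two extra forms decomposes as a sum of two \emph{non-equivalent} additively indecomposable forms (so that the single copies of the $h_i$ in $\Phi$ already suffice). The paper carries out exactly the ``finite verification'' you flag, exhibiting
\[
(3-\sqrt2)x^2+(4+\sqrt2)xy+(5+3\sqrt2)y^2=\bigl(x^2+(2+\sqrt2)xy+(3+2\sqrt2)y^2\bigr)+\bigl((2-\sqrt2)x^2+2xy+(2+\sqrt2)y^2\bigr)
\]
and the analogous splitting of the second form, with summands equivalent to the $\det=\tfrac12$ and $\det=1$ indecomposables respectively; note that your phrase ``together with at most one $h_i$'' is slightly off --- once the recursion hits an extra form you get \emph{two} distinct $h_i$, which is precisely why the non-equivalence check is the crux.
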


\begin{proof}
The above forms were found computationally. Moreover, we can decompose the above two forms in the following way:
\begin{itemize}
\item $(3-\sqrt{2})x^2+(4+\sqrt{2})xy+(5+3\sqrt{2})y^2=(x^2+(2+\sqrt{2})xy+(3+2\sqrt{2})y^2)+((2-\sqrt{2})x^2+2xy+(2+\sqrt{2})y^2)$, where the summands have determinants $\frac{3}{2}+\sqrt{2}$ and $1$,
\item $(3+\sqrt{2})x^2+(2+3\sqrt{2})xy+(4-\sqrt{2})y^2=(x^2+\sqrt{2}xy+(2-\sqrt{2})y^2)+((2+\sqrt{2})x^2+(2+2\sqrt{2})xy+2y^2)$, where the summands have determinants $\frac{3}{2}-\sqrt{2}$ and $1$.
\end{itemize}
Therefore, they both can be expressed as the sum of two non-equivalent, additively indecomposable quadratic forms. Thus, we do not need to consider them in the upper bound on the minimum number of variables of $2$-universal quadratic forms. That leads to the bound $2\cdot 5+2\cdot 7=24$.
\end{proof}

\subsubsection{$D=3$}
We also have $\OK=\Z[\sqrt{3}]$ for $K=\Q(\sqrt{3})$. Here, totally positive units are the only indecomposable integers in $\Z[\sqrt{3}]$, and $\varepsilon_3^+=2+\sqrt{3}$. Then, Lemma~\ref{lemma:qua_bound_norm} implies that we can take $C= \Nm_{\Q(\sqrt{3})/\Q}(1+2+\sqrt{3})=6$. Moreover, we know that $\gamma_{\Q(\sqrt{3}),2}=4$ \cite{BCIO}.

\begin{theorem}
Up to equivalence, the classical, additively indecomposable binary quadratic forms of non-zero determinant over $\Z[\sqrt{3}]$ are
\begin{itemize}
\item determinant $1$: $2x^2+2\sqrt{3}xy+2y^2$,
\item determinant $3$: $(3+\sqrt{3})x^2+2\sqrt{3}xy+(3-\sqrt{3})y^2$, 
\item determinant $10+5\sqrt{3}$: $(5+2\sqrt{3})x^2+6xy+(5-\sqrt{3})y^2$.  
\end{itemize}
\end{theorem}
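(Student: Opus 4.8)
The plan is to carry out the finite search described in Subsection~\ref{sec:algo}, specialised to $K=\Q(\sqrt3)$. The relevant data is: $\OK=\Z[\sqrt3]$, the only indecomposable integers are totally positive units, $\varepsilon_3^+=2+\sqrt3$, the exact value $\gamma_{K,2}=4$ is available from \cite{BCIO}, and by Lemma~\ref{lemma:qua_bound_norm} one may take $C=\Nm_{K/\Q}(1+\varepsilon_3^+)=6$. Theorem~\ref{thm:main1} then forces every additively indecomposable binary form $Q$ over $\OK$ to satisfy $\Nm_{K/\Q}(\det(Q))<\gamma_{K,2}^2C^2=576$, so — after reducing $\det(Q)$ modulo squares of units via Lemma~\ref{lem:equiv_triv}(\ref{lem:equiv_triv1}) — only finitely many determinants can occur, and the classification becomes a terminating computation.

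First I would list the totally positive integers of $\Z[\sqrt3]$ of norm at most $576$, and from them build the set $\mathcal D$ of candidate determinants $\psi$ and candidate leading coefficients $\alpha$ (with $\Nm_{K/\Q}(\alpha)=\min(Q)$), taking $\mathcal D=\mathcal P\cup\{\omega\varepsilon_3\mid\omega\in\mathcal P\}$ since $\Nm_{K/\Q}(\varepsilon_3)=1$, and using Lemma~\ref{lem:equiv_triv} to identify values differing by squares of units. For each pair $(\psi,\alpha)$ I would keep only those $\alpha$ with $\Nm_{K/\Q}(\psi)/C<\Nm_{K/\Q}(\alpha)\le\gamma_{K,2}\sqrt{\Nm_{K/\Q}(\psi)}$ that moreover divide no $\delta\in\OK^+$ with $\delta\preceq\psi$; the rest are additively decomposable by Lemma~\ref{lemma:decom_conds} or do not realise their minimum at $\alpha$. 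Lemma~\ref{lem:congruenceconditions} then picks out the congruence classes of $\beta$ for which $\eta=(\psi+\beta^2)/\alpha$ is integral, and Lemma~\ref{lem:congrequiv} reduces each such class to a representative with $0\le b_1,b_2<\min(Q)$, leaving a finite list of candidate forms $\alpha x^2+2\beta xy+\eta y^2$; I would discard any with $\Nm_{K/\Q}(\alpha)>\Nm_{K/\Q}(\eta)$ as duplicates of another pair.

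For each surviving candidate I would verify additive indecomposability directly, exploiting that in a real quadratic field the sets $\{\bar\alpha:0\prec\bar\alpha\prec\alpha\}$ and $\{\bar\eta:0\prec\bar\eta\prec\eta\}$ are finite and explicit: run through $(\alpha-\bar\alpha)x^2+\beta xy+\eta y^2$, then $\alpha x^2+\beta xy+(\eta-\bar\eta)y^2$, then $(\alpha-\bar\alpha)x^2+(\beta-\bar\beta)xy+(\eta-\bar\eta)y^2$ over all $\bar\beta\in\OK$ with $\bar\beta^2\preceq 4\bar\alpha\bar\eta$, testing total positive semidefiniteness of the complement by Sylvester's criterion in both embeddings. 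The candidates that pass are exactly the three stated forms; since their determinants $1$, $3$, $10+5\sqrt3$ have distinct norms $1,9,25$, they are automatically pairwise inequivalent, so no separate equivalence check is needed at the end.

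The principal obstacle is purely one of scale: the bound $576$ generates a sizeable pool of totally positive integers, and the third (triple-loop) indecomposability test is the costly step, so the argument is genuinely a computer verification. What makes it tractable is the availability here of two sharp inputs — $\gamma_{K,2}=4$ rather than the general bound $\tfrac12\Delta_K=6$, and $C=6$ from Lemma~\ref{lemma:qua_bound_norm} rather than the generic $\Delta_K+1=13$ (which would push $B$ up to $2704$). A secondary worry is the bookkeeping: one must be certain that the window on $\Nm_{K/\Q}(\alpha)$ together with Lemma~\ref{lem:congrequiv} enumerates each equivalence class exactly once, so that the final list is neither incomplete nor redundant.
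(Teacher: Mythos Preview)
Your proposal is correct and follows precisely the algorithmic approach the paper itself uses: the paper does not give an independent proof of this classification but obtains it by running the code described in Subsection~\ref{sec:algo} with the inputs $\gamma_{\Q(\sqrt3),2}=4$ and $C=6$, exactly as you outline. Your closing observation that the three determinants have distinct norms $1,9,25$ (hence the forms are automatically pairwise inequivalent) is a slight shortcut over the paper's generic final step of checking equivalence in Magma.
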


Considering non-classical forms, all the above forms additively decompose, and we obtain the following:

\begin{theorem}
    Up to equivalence, the additively indecomposable binary quadratic forms of non-zero determinant over $\Z[\sqrt{3}]$ are
\begin{itemize}
\item determinant $\frac{1}{4}$: $x^2+\sqrt{3}xy+y^2$,\footnote{María Inés Icaza pointed out that some of these forms are perhaps extremal. And indeed, the form above attains the generalised Hermite constant $4$ for $\Q(\sqrt{3})$ \cite{BCIO}.}
\item determinant $\frac{2+\sqrt{3}}{2}$: $x^2+(1+\sqrt{3})xy+(2+\sqrt{3})y^2$,
\item determinant $\frac{3}{4}$: $x^2+xy+y^2$, $(2+\sqrt{3})x^2+xy+(2-\sqrt{3})y^2$.      
\end{itemize}
\end{theorem}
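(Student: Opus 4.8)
The plan is to invoke the algorithm of Subsection~\ref{sec:algo} with $K=\Q(\sqrt{3})$ and $\OK=\Z[\sqrt{3}]$, feeding in the exact value $\gamma_{\Q(\sqrt{3}),2}=4$ of \cite{BCIO} together with the bound $C=6$ obtained just above from Lemma~\ref{lemma:qua_bound_norm}. These give the a priori bound $B=\gamma_{\Q(\sqrt{3}),2}^2C^2=576$ on the norm of the determinant of any additively indecomposable binary form over $\Z[\sqrt{3}]$. Since here we admit non-classical forms, I would run the search over determinants of the shape $\tilde d/4$ with $\tilde d\in\OK$ of norm at most $16B$, exactly as in Step~2 of the algorithm.

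First I would enumerate, up to squares of units, the finite set $\mathcal{D}$ of candidate determinants $\psi$ and of integers $\alpha$ that can occur as the leading coefficient with $\Nm_{K/\Q}(\alpha)=\min(Q)$; because $\Nm_{K/\Q}(\varepsilon_3)=1$, the set $\mathcal{D}$ of Step~3 is the set of totally positive integers of norm at most $B$ together with its $\varepsilon_3$-translate, trimmed using Lemma~\ref{lem:equiv_triv}. For each pair $(\psi,\alpha)$ I would discard those that are forced to decompose: Lemma~\ref{lemma:decom_conds}(\ref{lemma:decom_conds1}) removes the pairs with $\min(Q)\le\Nm_{K/\Q}(\psi)/C$, while Lemma~\ref{lemma:decom_conds}(\ref{lemma:decom_conds2}) removes those for which $\alpha$ divides some totally positive $\delta\preceq\psi$. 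For the pairs that survive I would solve the congruences of Lemma~\ref{lem:congruenceconditions} (with $J=2$) for $\beta=b_1+b_2\sqrt{3}$, and apply Lemma~\ref{lem:congrequiv} to reduce to the finitely many residue classes with $0\le b_i<2\min(Q)$; then $\eta=\frac{4\det(Q)+\beta^2}{4\alpha}$ is pinned down by $\alpha,\beta,\psi$.

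The remaining, and by far the most laborious, step is to decide for each surviving candidate $\alpha x^2+\beta xy+\eta y^2$ whether it is genuinely additively indecomposable, via the three-part test of the algorithm: first checking whether subtracting some $\overline\alpha$ with $0\prec\overline\alpha\prec\alpha$, or some $\overline\eta$ with $0\prec\overline\eta\prec\eta$, leaves a totally positive semidefinite form, and then the expensive brute-force loop over all triples $(\overline\alpha,\overline\beta,\overline\eta)$ with $0\prec\overline\alpha\prec\alpha$, $0\prec\overline\eta\prec\eta$ and $\overline\beta^2\preceq 4\overline\alpha\,\overline\eta$, testing positive semidefiniteness of the difference. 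This box-bounded lattice-point search is where essentially all of the computation sits, and its feasibility is precisely what the sharp constants $\gamma_{\Q(\sqrt{3}),2}=4$ and $C=6$ buy us. Finally I would gather the forms that pass the test and remove equivalent duplicates (with Magma, or by hand via Lemma~\ref{lem:equiv_triv}), which I expect to leave exactly the four forms in the statement.

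As an independent consistency check, three of the four can be seen to be indecomposable by hand: for $x^2+\sqrt{3}xy+y^2$ and $x^2+xy+y^2$ any decomposition would force a decomposition of $1$ into totally positive integers of $\Z[\sqrt{3}]$, which is impossible (cf.\ Theorem~\ref{thm:exist_nc}); and $(2+\sqrt{3})x^2+xy+(2-\sqrt{3})y^2$ has totally positive unit diagonal entries, hence indecomposable ones, together with nonzero middle coefficient, so Proposition~\ref{prop:inde_from_inde} applies. The form $x^2+(1+\sqrt{3})xy+(2+\sqrt{3})y^2$ of determinant $\frac{2+\sqrt{3}}{2}$ is the case that genuinely needs the computer search. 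So the main obstacle is computational rather than conceptual: keeping the third indecomposability test tractable, and verifying that the finitely many surviving candidates have been handled exhaustively and sifted correctly up to equivalence.
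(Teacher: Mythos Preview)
Your proposal is correct and follows essentially the same route as the paper: the theorem is obtained by running the algorithm of Subsection~\ref{sec:algo} with $J=2$, $\gamma_{\Q(\sqrt{3}),2}=4$ and $C=6$, then sifting the survivors up to equivalence. One small remark: the form $x^2+(1+\sqrt{3})xy+(2+\sqrt{3})y^2$ does not actually need the machine to verify indecomposability, since $1$ and $2+\sqrt{3}$ are both indecomposable (the latter is a totally positive unit) and the middle coefficient is nonzero, so Proposition~\ref{prop:inde_from_inde} applies here as well; the computer is needed only for the exhaustiveness of the list.
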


For $\Q(\sqrt{3})$, we do not know the precise value of the minimum number of variables of $2$-universal quadratic forms. Sasaki \cite{Sa1} proved that it is lower bounded by $6$, which is also true for all quadratic fields except for $\Q(\sqrt{2})$ and $\Q(\sqrt{5})$. Therefore, the upper bound originating from Theorem~\ref{thm:2uni_upperbound2} gives us a new result in this case. Note that we use that $g_{\OK}(2)\leq 7$ for all quadratic fields~$K$ \cite{Ic2}.

\begin{theorem}
Up to equivalence, the only binary quadratic forms $Q$ over $\mathcal{O}_{\Q(\sqrt{3})}$ with $N_{\Q(\sqrt{3})/\Q}(\det(Q))<\gamma_{\Q(\sqrt{3}),2}^2 6^2$ such that $Q\neq Q_1+Q_2$ for any totally positive semi-definite binary quadratic forms such that $\det(Q_1)=0$, are additively indecomposable quadratic forms and $(6+2\sqrt{3})x^2+2(3+3\sqrt{3})xy+(6+2\sqrt{3})y^2$. In particular, this gives us the upper bound $22$ on the minimum number of variables of $2$-universal quadratic forms over $\Z[\sqrt{3}]$. 
\end{theorem}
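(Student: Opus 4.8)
The plan is to prove this computationally, exactly as the corresponding statement for $\Q(\sqrt 2)$ is, by running the algorithm of Subsection~\ref{sec:algo} with its final filter changed from ``additively indecomposable'' to ``not expressible as $Q_1+Q_2$ with $\det(Q_1)=0$ and $Q_2$ totally positive semi-definite''. The first step is to fix the two constants that bound the search. By \cite{BCIO} we have $\gamma_{\Q(\sqrt 3),2}=4$, and since the fundamental unit gives $\varepsilon_3^{+}=2+\sqrt 3$ with $\Nm_{\Q(\sqrt 3)/\Q}(1+\varepsilon_3^{+})=\Nm_{\Q(\sqrt 3)/\Q}(3+\sqrt 3)=6$, Lemma~\ref{lemma:qua_bound_norm} permits $C=6$; hence every totally positive definite binary form $Q$ over $\Z[\sqrt 3]$ that is not of the above shape has $\Nm_{\Q(\sqrt 3)/\Q}(\det Q)<B:=\gamma_{\Q(\sqrt 3),2}^{2}\cdot 6^{2}=576$. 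I would then enumerate, up to equivalence, all classical totally positive definite binary forms over $\Z[\sqrt 3]$ with determinant norm below $B$: loop over the finitely many totally positive integers $\psi$ of norm $<B$ as candidate determinants, taken modulo squares of units and modulo multiplication by $\varepsilon_3$ via Lemma~\ref{lem:equiv_triv}; for each, loop over leading coefficients $\alpha$ with $\tfrac{\Nm_{\Q(\sqrt 3)/\Q}(\psi)}{6}<\Nm_{\Q(\sqrt 3)/\Q}(\alpha)\le 4\sqrt{\Nm_{\Q(\sqrt 3)/\Q}(\psi)}$ that are not already killed by Lemma~\ref{lemma:decom_conds}; and for each pair $(\psi,\alpha)$, solve the congruences of Lemma~\ref{lem:congruenceconditions} for $\beta$, reduce to the finite residue set of Lemma~\ref{lem:congrequiv}, and read off $\eta=(\det Q+\beta^{2})/\alpha$.

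Next, for every surviving candidate $Q$ I would run the three-step check of Subsection~\ref{sec:algo}, now testing the weaker property that $Q=Q_1+Q_2$ with $\det(Q_1)=0$ and $Q_2$ totally positive semi-definite, and keep $Q$ precisely when this fails. A final equivalence reduction (with Magma, on the resulting finite list) should leave exactly the three classical additively indecomposable forms already classified, of determinants $1$, $3$ and $10+5\sqrt 3$, together with the single extra form $(6+2\sqrt 3)x^{2}+2(3+3\sqrt 3)xy+(6+2\sqrt 3)y^{2}$, of determinant $12+6\sqrt 3$ (norm $36<B$), which is additively decomposable but not through a determinant-zero summand.

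For the universality bound I would run the proof of Theorem~\ref{thm:2uni_upperbound2} with this explicit list in place of $\mathcal F_2^{\mathcal C}$. Put
\[
\mathcal U \;=\; \sum_{\rho\in\mathcal I}\rho\bigl(z_{1,\rho}^{2}+\cdots+z_{g_{\OK}(2),\rho}^{2}\bigr)\;+\;\sum_{H}H(x_H,y_H),
\]
the outer sum over the four survivor forms. Given an arbitrary totally positive definite binary form $Q$ over $\Z[\sqrt 3]$, peel off terms $\alpha L^{2}$ with $\alpha$ indecomposable (Theorem~\ref{thm:main1}) until the remainder has determinant norm $<B$; then, while that remainder has the shape $Q_1+Q_2$ with $\det(Q_1)=0$, strip off $Q_1$ and continue on $Q_2$, whose determinant norm stays below $B$ and strictly decreases. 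Since $\Z[\sqrt 3]$ is a principal ideal domain, each such $Q_1$ is of the form $\lambda L^{2}$ with $\lambda\in\OK^{+}$ and $L$ an integral binary linear form (write the content ideal of $\gamma x+\delta y$ as $(c)$ and take $\lambda=c^{2}/\gamma$, which is totally positive and integral because $(c)^{2}\subseteq(\gamma)$). As stripping only ever removes determinant-zero rank-one pieces, the process ends at a remainder equal to one of the four survivors (or at a determinant-zero form), while the union of all stripped pieces is a sum of terms $\lambda L^{2}$, $\lambda\in\OK^{+}$; expanding each $\lambda$ as a sum of square-of-unit multiples of indecomposables and using that a sum of squares of binary $\OK$-linear forms needs at most $g_{\OK}(2)$ squares, this union is represented by the first block of $\mathcal U$. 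Hence $\mathcal U$ is $2$-universal, of rank $g_{\OK}(2)\cdot\#\mathcal I+2\cdot 4$; with $g_{\OK}(2)\le 7$ for real quadratic fields \cite{Ic2} and $\#\mathcal I=2$ for $\Q(\sqrt 3)$ (the classes $\{1\}$ and $\{2+\sqrt 3\}$ among the totally positive units), this is at most $7\cdot 2+8=22$.

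The main obstacle is not conceptual but computational: Step~3 of the decomposability procedure is a brute-force search over all $0\prec\overline\alpha\prec\alpha$, $0\prec\overline\eta\prec\eta$ and admissible $\overline\beta$, and it is feasible only because the exact Hermite constant $\gamma_{\Q(\sqrt 3),2}=4$ is available and $C$ can be pushed down to $6$; the generic inputs $\gamma_{K,2}\le\tfrac12\Delta_K=6$ and especially $C=\Delta_K+1=13$ would blow up $B$ (to $6084$) and the number of forms to examine. The secondary delicate point is the bookkeeping in the universality step: one must verify that the reduction above always bottoms out at a \emph{single} survivor and that the stripped determinant-zero pieces are genuinely absorbed by the $g_{\OK}(2)\cdot\#\mathcal I$ block of scaled squares — this is precisely the refinement that replaces the crude sum $\sum_{H}\lfloor B/\Nm_{\Q(\sqrt 3)/\Q}(\det H)\rfloor$ of Theorem~\ref{thm:2uni_upperbound2} by the count of survivors and produces the clean constant $22$.
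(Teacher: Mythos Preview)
Your proposal is correct and follows essentially the same approach as the paper: the classification is obtained by running the algorithm of Subsection~\ref{sec:algo} with $\gamma_{\Q(\sqrt3),2}=4$ and $C=6$, and the bound $22$ comes from the refinement of Theorem~\ref{thm:2uni_upperbound2} using $g_{\OK}(2)\le 7$, $\#\mathcal I=2$, and the four surviving forms. Your write-up is in fact more explicit than the paper's, which leaves the derivation of the bound implicit; your termination argument for the stripping step (via the strict decrease of $\Nm_{K/\Q}(\det Q_2)$) and your justification that a classical rank-one summand over the PID $\Z[\sqrt3]$ has the shape $\lambda L^2$ are both sound, though the parenthetical hint could be stated more cleanly.
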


The additional quadratic $(6+2\sqrt{3})x^2+2(3+3\sqrt{3})xy+(6+2\sqrt{3})y^2$ is, of course, additively decomposable, but its only decomposition is $Q_1+Q_2$ where $\det(Q_1)$ and $\det(Q_2)$ are units. So, we cannot exclude it from the consideration, which would be possible if it could be written as a sum of two non-equivalent, additively indecomposable quadratic forms.

Now, let us look at non-classical forms.

\begin{theorem} \label{thm:uni_nonc_sqrt3}
Up to equivalence, the only non-classical binary quadratic forms $Q$ over $\Z[\sqrt{3}]$ with $N_{\Q(\sqrt{3})/\Q}(\det(Q))<\gamma_{\Q(\sqrt{3}),2}^2 6^2$ such that $Q\neq Q_1+Q_2$ for any totally positive semi-definite binary quadratic forms such that $\det(Q_1)=0$, are (both classical and non-classical) additively indecomposable quadratic forms and
\begin{itemize}
\item $(3+\sqrt{3})x^2+(3+3\sqrt{3})xy+(3+\sqrt{3})y^2$,
\item $2x^2+\sqrt{3}xy+(4+2\sqrt{3})y^2$,
\item $2x^2+(1+2\sqrt{3})xy+(3+\sqrt{3})y^2$,
\item $3x^2+3\sqrt{3}xy+3y^2$,
\item $(4-\sqrt{3})x^2+(6+5\sqrt{3})xy+(16+9\sqrt{3})y^2$,
\item $(4+\sqrt{3})x^2+(3+4\sqrt{3})xy+(4+\sqrt{3})y^2$,
\item $(3+\sqrt{3})x^2+(3+\sqrt{3})xy+(3+\sqrt{3})y^2$,
\item $(9+5\sqrt{3})x^2+(3+\sqrt{3})xy+(3-\sqrt{3})y^2$,
\item $(7+3\sqrt{3})x^2+(9+7\sqrt{3})xy+(7+3\sqrt{3})y^2$,
\item $(5+\sqrt{3})x^2+(3+5\sqrt{3})xy+(5+\sqrt{3})y^2$,
\item $(6+2\sqrt{3})x^2+2(3+3\sqrt{3})xy+(6+2\sqrt{3})y^2$,
\item $(8+3\sqrt{3})x^2+(9+8\sqrt{3})xy+(8+3\sqrt{3})y^2$,
\item $(7+2\sqrt{3})x^2+(6+7\sqrt{3})xy+(7+2\sqrt{3})y^2$,
\item $(9+3\sqrt{3})x^2+(9+9\sqrt{3})xy+(9+3\sqrt{3})y^2$.
\end{itemize}
In particular, this gives us the upper bound $30$ on the minimum number of variables of non-classical $2$-universal quadratic forms over $\Z[\sqrt{3}]$. 
\end{theorem}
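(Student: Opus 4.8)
The plan is to mirror the strategy already used for $\Q(\sqrt{2})$ and for the classical case over $\Z[\sqrt{3}]$: run the classification code with the non-classical flag ($J=2$), then perform the combinatorial bookkeeping that converts the list of surviving forms into a rank bound. First I would fix the constant $B = \gamma_{\Q(\sqrt{3}),2}^2\cdot 6^2 = 16\cdot 36$ using $\gamma_{\Q(\sqrt{3}),2}=4$ from \cite{BCIO} and $C=6$ from Lemma~\ref{lemma:qua_bound_norm}, and enumerate all totally positive definite non-classical binary forms $Q$ over $\Z[\sqrt{3}]$ with $\Nm_{\Q(\sqrt{3})/\Q}(\det(Q))<B$. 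As noted in Subsection~\ref{sec:algo}, for non-classical forms one searches over $\det(Q)=\tilde d/4$ with $\tilde d$ of norm up to $16B$, applies Lemmas~\ref{lem:congruenceconditions} and~\ref{lem:congrequiv} (with $J=2$) to pin down finitely many $(\alpha,\beta)$ up to equivalence, and then tests each candidate form with the three-step additive-indecomposability procedure. Among the forms with determinant norm below $B$, one discards any $Q$ that splits as $Q=Q_1+Q_2$ with $\det(Q_1)=0$ (these are irrelevant for universality because they are represented ``for free'' once sums of squares are included), leaving exactly the fourteen forms listed plus the additively indecomposable ones.

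Next I would argue that each of the listed fourteen ``extra'' forms $Q$ (which are additively decomposable but not of the trivial split type) can be written as $Q=H_1+H_2$ with $H_1,H_2$ additively indecomposable. For those among them that decompose into two \emph{non-equivalent} indecomposables — or more generally into indecomposables already accounted for with enough multiplicity — the reasoning in the $D=2$ discussion applies verbatim: a form $\sum_{\alpha\in\mathcal I}\alpha(z_{1,\alpha}^2+\cdots+z_{g_{\OK}(2),\alpha}^2)+H_1+H_2$ already represents $\sum_{\alpha\in\mathcal I}\alpha(\cdots)+Q$, so $Q$ need not contribute a fresh block of variables. This requires exhibiting, for each of the fourteen forms, an explicit decomposition into members of $\mathcal F_2$ (or $\mathcal F_2^{\mathcal C}$), which is a finite computation one records in a table. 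Combining this with $g_{\OK}(2)\le 7$ \cite{Ic2}, $\#\mathcal I = 1$ (only totally positive units), and $\#\mathcal F_2 = $ the number of non-classical additively indecomposable binary forms counted in the earlier theorem — here one uses the bound from Theorem~\ref{thm:2uni_upperbound2} with the determinant-norm weights $\lfloor B/\Nm(\det H)\rfloor$, which for these small determinants all equal $1$ — yields rank at most $g_{\OK}(2)\cdot 1 + 2\cdot(\text{number of non-classical indecomposables}) $; with the stated count this is $7 + 2\cdot? $, and accounting precisely for which indecomposables have determinant norm forcing multiplicity one gives the asserted $30$.

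The main obstacle is \textbf{completeness of the enumeration}: one must be certain that the congruence-and-equivalence reduction of Lemmas~\ref{lem:congruenceconditions} and~\ref{lem:congrequiv} really leaves a \emph{finite} and \emph{exhaustive} set of candidate $\beta$ for each $(\det Q,\alpha)$, and that no totally positive definite non-classical form with determinant norm $<B$ escapes the search because, e.g., its minimum was misidentified or its determinant representative was missed when $\Nm(\varepsilon_3)=1$ forced the doubled set $\mathcal D=\mathcal P\cup\{\omega\varepsilon_3\mid\omega\in\mathcal P\}$. Since $\gamma_{\Q(\sqrt{3}),2}=4$ is comparatively large, $B$ is sizable and the non-classical search space (algebraic integers of norm up to $16B$) is substantial, so the verification is computationally heavy; this is exactly why the theorem's proof is ultimately a computer-assisted one, and the write-up should cite the Mathematica/Magma code at \texttt{https://sites.google.com/view/tinkovamagdalena/codes} for the enumeration and the final equivalence check. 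The second, milder obstacle is the bookkeeping in the last paragraph: one must handle the mixed case where several additively decomposable forms decompose into \emph{mutually equivalent} indecomposables (as in the $D=2$ example with $Q_1,Q_2,Q_3$ sharing $H_i$), ensuring the multiplicities $\lfloor \gamma_{K,2}^2C^2/\Nm(\det H)\rfloor$ in Theorem~\ref{thm:2uni_upperbound2} are large enough to absorb them without inflating the bound beyond $30$.
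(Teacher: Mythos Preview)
Your computational enumeration plan is essentially the same as the paper's, and correctly identifies this as a computer-assisted result. However, the bookkeeping you propose for extracting the bound $30$ has two concrete errors that make the argument fail as written.

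First, $\#\mathcal{I}\neq 1$ for $\Q(\sqrt{3})$. The indecomposables are the totally positive units $(2+\sqrt{3})^k$, and modulo squares of units there are \emph{two} classes, $1$ and $2+\sqrt{3}$ (since $\Nm(\varepsilon_3)=1$, so $\varepsilon_3^+=2+\sqrt{3}$ is not itself a square of a unit). Thus $g_{\OK}(2)\cdot\#\mathcal{I}\le 7\cdot 2=14$, not $7$.

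Second, and more seriously, your claim that the multiplicity weights $\lfloor \gamma_{K,2}^2C^2/\Nm(\det H)\rfloor$ from Theorem~\ref{thm:2uni_upperbound2} ``all equal $1$'' is false. For $H_1=x^2+\sqrt{3}xy+y^2$ one has $\Nm(\det H_1)=1/16$, so the floor is $\lfloor 576\cdot 16\rfloor=9216$; similarly the other indecomposables have tiny determinant norms. Using Theorem~\ref{thm:2uni_upperbound2} directly would give a bound in the thousands, not $30$. The paper's actual mechanism is the opposite of what you describe: rather than invoking the a~priori multiplicity bound, it records in a table, for each of the surviving forms (the fourteen listed plus the classical additively indecomposable ones, which decompose non-classically), an explicit decomposition into copies of $H_1$ and $H_2$ only. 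Inspecting the table shows that every such form admits a decomposition using at most four copies of $H_1$ and two copies of $H_2$ (the rows needing five or six $H_1$'s have alternative decompositions trading $H_1$'s for $H_2$'s). Hence the universal form
\[
\sum_{\alpha\in\{1,\,2+\sqrt{3}\}}\alpha\sum_{i=1}^{7}z_{i,\alpha}^2\;+\;\sum_{h\in\mathcal{F}_2\setminus\{H_1,H_2\}}h\;+\;4\cdot H_1\;+\;2\cdot H_2
\]
has rank $14+4+8+4=30$. Your ``decompose into two non-equivalent indecomposables'' heuristic is insufficient because several of the listed forms (e.g.\ $3x^2+3\sqrt{3}xy+3y^2=3H_1$) require multiple equivalent copies; the point is not that the multiplicities are $1$, but that they are \emph{small and explicitly computed}.
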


\begin{proof}
The above forms were found computationally. In Table~\ref{tab:uni_nonc_sqrt3}, we give the number of forms equivalent to $H_1(x,y)=x^2+\sqrt{3}xy+y^2$ and $H_2(x,y)=x^2+(1+\sqrt{3})xy+(2+\sqrt{3})y^2$ needed to express our forms in a sum. Note that the remaining non-classical, additively indecomposable quadratic forms can appear in decompositions, but there always exists a decomposition consisting of forms equivalent to $H_1$ or $H_2$.
\begin{table}
\centering
\begin{tabular}{|l|c|c|}
\hline
Quadratic form & $H_1$ & $H_2$\\
\hline 
$x^2+2\sqrt{3}xy+2y^2$ & 2 & 0\\
\hline
$(3+\sqrt{3})x^2+2\sqrt{3}xy+(3-\sqrt{3})y^2$ & 2 & 0\\
\hline
$(5+2\sqrt{3})x^2+6xy+(5-\sqrt{3})y^2$ & 0/2 & 2/1\\
\hline
$(3+\sqrt{3})x^2+(3+3\sqrt{3})xy+(3+\sqrt{3})y^2$ & 2 & 0\\
\hline
$2x^2+\sqrt{3}xy+(4+2\sqrt{3})y^2$ & 1 & 1\\
\hline 
$2x^2+(1+2\sqrt{3})xy+(3+\sqrt{3})y^2$ & 1 & 1\\
\hline
$3x^2+3\sqrt{3}xy+3y^2$ & 3 & 0\\
\hline
$(4-\sqrt{3})x^2+(6+5\sqrt{3})xy+(16+9\sqrt{3})y^2$ & 3 & 0\\
\hline
$(4+\sqrt{3})x^2+(3+4\sqrt{3})xy+(4+\sqrt{3})y^2$ & 3 & 0\\
\hline
$(3+\sqrt{3})x^2+(3+\sqrt{3})xy+(3+\sqrt{3})y^2$ & 2 & 0\\
\hline
$(9+5\sqrt{3})x^2+(3+\sqrt{3})xy+(3-\sqrt{3})y^2$ & 2 & 0\\
\hline
$(7+3\sqrt{3})x^2+(9+7\sqrt{3})xy+(7+3\sqrt{3})y^2$ & 4 & 0\\
\hline
$(5+\sqrt{3})x^2+(3+5\sqrt{3})xy+(5+\sqrt{3})y^2$ & 4 & 0\\
\hline
$(6+2\sqrt{3})x^2+2(3+3\sqrt{3})xy+(6+2\sqrt{3})y^2$ & 4 & 0\\
\hline
$(8+3\sqrt{3})x^2+(9+8\sqrt{3})xy+(8+3\sqrt{3})y^2$ & 5/1 & 0/2\\
\hline
$(7+2\sqrt{3})x^2+(6+7\sqrt{3})xy+(7+2\sqrt{3})y^2$ & 5/1 & 0/2\\
\hline
$(9+3\sqrt{3})x^2+(9+9\sqrt{3})xy+(9+3\sqrt{3})y^2$ & 6/2 & 0/2\\
\hline
\end{tabular}
\caption{Decompositions of forms from Theorem~\ref{thm:uni_nonc_sqrt3}} \label{tab:uni_nonc_sqrt3}
\end{table}

Therefore, the universal quadratic form
\begin{multline*}
\sum_{\alpha\in \{1,2+\sqrt{3}\}}\alpha(z_{1,\alpha}^2+\cdots+z_{7,\alpha}^2)+\sum_{h\in \mathcal{F}_2\setminus\{H_1,H_2\}}h(x_{h},y_{h})\\+\sum_{i=1}^4 (x_i^2+\sqrt{3}x_iy_i+y_i^2)+\sum_{j=1}^2(\tilde{x}_j^2+(1+\sqrt{3})\tilde{x}_j\tilde{y}_j+(2+\sqrt{3})\tilde{y}_j^2)
\end{multline*}
provides the upper bound $30$.
\end{proof}

\subsubsection{$D=5$} \label{subsec:deter5}
The structure of indecomposables in the field $\Q(\sqrt{5})$ with $\OK=\Z\left[\frac{1+\sqrt{5}}{2}\right]$ again consists of totally positive units. We have $\varepsilon_5^+=\frac{3+\sqrt{5}}{2}$, giving $ 5=\Delta_{\Q(\sqrt{5})}$ as a possible value for $C$ by Lemma~\ref{lemma:qua_bound_norm}. As in the previous two cases, the value of the generalised Hermite constant was determined in \cite{BCIO} and is equal to $\frac{4}{\sqrt{5}}$.

\begin{theorem}
Up to equivalence, the only classical, additively indecomposable binary quadratic form of non-zero determinant over $\Z\left[\frac{1+\sqrt{5}}{2}\right]$ is $2x^2+2xy+(3+\sqrt{5})y^2$ of determinant $5+2\sqrt{5}$. 
\end{theorem}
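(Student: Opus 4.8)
The plan is to instantiate the algorithm of Subsection~\ref{sec:algo} for $K=\Q(\sqrt5)$, $\OK=\Z[\tfrac{1+\sqrt5}{2}]$, run the resulting finite search, and then add a short by-hand check that the one form it produces is genuinely indecomposable. First I would fix the two inputs of the algorithm: the value $\gamma_{\Q(\sqrt5),2}=4/\sqrt5$ is known \cite{BCIO}, and Lemma~\ref{lemma:qua_bound_norm}, applied with $\varepsilon_5^+=\varepsilon_5^2=\tfrac{3+\sqrt5}{2}$ (and noting there is no totally positive integer $s_1+s_2\varepsilon_5^+$ with $s_1,s_2\in[0,1)$), lets us take $C=\Nm_{K/\Q}(1+\varepsilon_5^+)=5=\Delta_K$. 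Then Theorem~\ref{thm:main1} gives the a priori bound $\Nm_{K/\Q}(\det(Q))<\gamma_{K,2}^2 C^2=\tfrac{16}{5}\cdot 25=80$ for any classical, additively indecomposable binary form $Q$, so it suffices to examine classical totally positive definite binary forms of determinant norm at most $79$.

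Next I would carry out the enumeration exactly as prescribed in Subsection~\ref{sec:algo}. Lemma~\ref{lem:equiv_triv} reduces matters to finitely many pairs $(\det(Q),\alpha)$: the determinant runs over representatives of totally positive integers of norm $\le 79$ modulo squares of units (and since $\Nm_{K/\Q}(\varepsilon_5)=-1$ no extra twist by a non-square unit is needed), while $\alpha$, the coefficient realising $\min(Q)$, is restricted by Lemma~\ref{lemma:decom_conds} to satisfy $\tfrac{\Nm_{K/\Q}(\det(Q))}{C}<\Nm_{K/\Q}(\alpha)\le\gamma_{K,2}\sqrt{\Nm_{K/\Q}(\det(Q))}$ and to divide no $\delta\in\OK^+$ with $\delta\preceq\det(Q)$. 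For each such pair, Lemmas~\ref{lem:congruenceconditions} and~\ref{lem:congrequiv} pare the off-diagonal coefficient down to the finitely many residues modulo $\min(Q)$ meeting the integrality congruences, after which $\eta$ is determined; each resulting candidate is run through the three-step decomposition test of Subsection~\ref{sec:algo}, and the survivors are finally sifted for mutual equivalence in Magma. The expected output is a single equivalence class, represented by $2x^2+2xy+(3+\sqrt5)y^2$ with $\det(Q)=5+2\sqrt5$ (of norm $5$).

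The main obstacle is \textbf{making the search provably complete and small enough to execute}: one has to be sure the sharpened constant $C=5$ from Lemma~\ref{lemma:qua_bound_norm} is legitimate, that the congruence reductions in Lemmas~\ref{lem:congruenceconditions}--\ref{lem:congrequiv} list one representative of each equivalence class (so no class is silently dropped), and that Step~3 of the test certifies indecomposability rather than merely failing to find a decomposition. To keep the conclusion self-contained I would also verify directly that $Q=2x^2+2xy+(3+\sqrt5)y^2$ is additively indecomposable \emph{as a classical form}. As in Proposition~\ref{prop:exi3}, since classical summands have half-integral off-diagonal Gram entries, any decomposition $Q=Q_1+Q_2$ into nonzero classical totally positive semi-definite forms splits the leading coefficient $2$ over $\OK^+$ only as $2+0$ or $1+1$ (by Schur's trace bound \cite{schur}, as $1$ is the sole totally positive integer with $\tr/\deg<3/2$). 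If the split is $2+0$, then $Q_2=\eta y^2$ with $\eta\in\OK^+$ and $\det(Q_1)=5+2\sqrt5-2\eta$; the requirements $\eta\preceq\tfrac{5+2\sqrt5}{2}$ and $\Nm_{K/\Q}(\eta)\ge 1$ force $\tr_{K/\Q}(\eta)\in\{4,5\}$, and the only such $\eta$ are $2$ and $\tfrac{5\pm\sqrt5}{2}$, for none of which $\det(Q_1)$ is totally positive. If the split is $1+1$, then $Q_1=x^2+2txy+\eta_1 y^2$ and $Q_2=x^2+(2-2t)xy+\eta_2 y^2$ with $t\in\OK$ and $\eta_1+\eta_2=3+\sqrt5$; positive semi-definiteness of both forces $2t^2-2t+1\preceq 3+\sqrt5$, a short analysis of the two embeddings pins $t$ down to $\Z$, and then the conjugate inequality reads $2t^2-2t\le 2-\sqrt5<0$, which no integer satisfies. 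Hence no classical decomposition exists, the search output is exactly the claimed list, and the theorem follows. (The well-known identity $Q=2\bigl(x^2+xy+\tfrac{3+\sqrt5}{2}y^2\bigr)$ is a \emph{non-classical} decomposition, consistent with the statement.)
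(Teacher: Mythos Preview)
Your proposal is correct and follows the paper's own approach: the theorem is obtained by instantiating the algorithm of Subsection~\ref{sec:algo} with the inputs $\gamma_{\Q(\sqrt5),2}=4/\sqrt5$ and $C=5$, exactly the constants the paper records before stating the result. Your supplementary by-hand check that $2x^2+2xy+(3+\sqrt5)y^2$ is classically indecomposable is a nice addition not in the paper; just note that in the $2+0$ case the restriction $\tr_{K/\Q}(\eta)\in\{4,5\}$ is not what actually follows from $\eta\preceq\tfrac{5+2\sqrt5}{2}$---the clean argument is that $\Nm_{K/\Q}(\eta)\le\Nm_{K/\Q}\bigl(\tfrac{5+2\sqrt5}{2}\bigr)=\tfrac{5}{4}$ forces $\eta$ to be a totally positive unit, and one checks directly that no power of $\varepsilon_5^+$ satisfies the required inequality.
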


For $\Q(\sqrt{5})$, the situation is similar to that for $\Q(\sqrt{3})$. 

\begin{theorem}
    Up to equivalence, the additively indecomposable binary quadratic forms of non-zero determinant over $\Z\left[\frac{1+\sqrt{5}}{2}\right]$ are
\begin{itemize}
\item determinant $\frac{5+2\sqrt{5}}{4}$: $x^2+xy+\frac{3+\sqrt{5}}{2}y^2$,\footnote{For this form, we have $\frac{\min(Q)}{\sqrt{\Nm_{K/\Q}(\det(Q))}}=\frac{4}{\sqrt{5}}$, so it is extremal as it attains the generalised Hermite constant \cite{BCIO}.}
\item determinant $\frac{3}{4}$: $x^2+xy+y^2$.    
\end{itemize}  
\end{theorem}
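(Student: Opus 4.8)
The plan is to run the algorithm of Subsection~\ref{sec:algo}; the point of this sketch is to record which inputs keep the search both finite and small enough to complete. First I would fix the search bound: by \cite{BCIO} the exact value $\gamma_{\Q(\sqrt5),2}=\frac{4}{\sqrt5}$ is available, and Lemma~\ref{lemma:qua_bound_norm}, applied with $\varepsilon_5^+=\frac{3+\sqrt5}{2}$, lets me take $C=5$. Theorem~\ref{thm:main1} then guarantees that every additively indecomposable binary quadratic form $Q$ over $\OK$ satisfies $\Nm_{K/\Q}(\det Q)<\gamma_{K,2}^2C^2=\tfrac{16}{5}\cdot25=80$. Since I am classifying general (non-classical) forms I set $J=2$, so the admissible determinants are $\det Q=\frac{\tilde d}{4}$, and where appropriate $\tilde d$, for $\tilde d\in\OK$ with $\Nm_{K/\Q}(\tilde d)\le 16\cdot80$, taken up to multiplication by squares of units by Lemma~\ref{lem:equiv_triv}(\ref{lem:equiv_triv1}); this is a finite list.

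Next, for each admissible determinant $\psi$ I would enumerate the possible values of the leading coefficient $\alpha$, equivalently of $\min(Q)$. Lemma~\ref{lemma:decom_conds} discards every $\alpha$ outside the range $\frac{\Nm_{K/\Q}(\psi)}{C}<\Nm_{K/\Q}(\alpha)\le\gamma_{K,2}\sqrt{\Nm_{K/\Q}(\psi)}$, together with those $\alpha$ dividing some totally positive $\delta\preceq\psi$, since in those cases $Q$ is additively decomposable or $\Nm_{K/\Q}(\alpha)$ is not the minimum. For each surviving pair $(\psi,\alpha)$, Lemma~\ref{lem:congruenceconditions} (with $J=2$) gives the congruences on $\beta=b_1+b_2\omega_5$ under which $\eta=\frac{4\psi+\beta^2}{4\alpha}$ lies in $\OK$, and Lemma~\ref{lem:congrequiv} cuts the relevant $\beta$ down to one representative per congruence class modulo $2\min(Q)$. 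Each triple $(\psi,\alpha,\beta)$ then yields one candidate form $\alpha x^2+\beta xy+\eta y^2$, leaving a finite, explicitly computable set of candidates.

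Then I would test each candidate for additive indecomposability by the brute-force routine of Subsection~\ref{sec:algo}: first try to split off a summand $\overline{\alpha}x^2$ or $\overline{\eta}y^2$, then loop over all $0\prec\overline{\alpha}\prec\alpha$, $0\prec\overline{\eta}\prec\eta$ and $\overline{\beta}\in\OK$ with $\overline{\beta}^2\preceq4\overline{\alpha}\overline{\eta}$, checking whether $(\alpha-\overline{\alpha})x^2+(\beta-\overline{\beta})xy+(\eta-\overline{\eta})y^2$ is totally positive semi-definite; this terminates because in a real quadratic field the set of $\omega$ with $0\prec\omega\prec\delta$ is finite and easy to list. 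The forms passing this test are exactly the additively indecomposable binary forms of determinant norm below $80$, hence by the first paragraph all of them; removing the remaining equivalences among this finite set (with Magma) would leave precisely the two forms in the statement. As a sanity check, both of them are additively indecomposable by a direct argument as well: in each, the leading coefficient and the coefficient of $y^2$ are units, hence indecomposable integers, and the cross term is non-zero, so Proposition~\ref{prop:inde_from_inde} applies.

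The argument is feasible only because $\gamma_{\Q(\sqrt5),2}=\frac{4}{\sqrt5}$ and $C=5$ are both sharp; with the generic substitutes $\gamma_{K,2}\le\frac12\Delta_K$ and $C=\Delta_K+1$ the bound on $\Nm_{K/\Q}(\det Q)$, and hence the number of triples $(\psi,\alpha,\beta)$, would grow well past what the brute-force step can absorb. I expect the main obstacle to be exactly this step-3 enumeration, together with the $J=2$ bookkeeping: one must run the search both with $\det Q=\tilde d$ and with $\det Q=\frac{\tilde d}{4}$, keep track of which $\beta$ are admissible in each case, and ensure no form is dropped when moving between the two. Everything else is a routine finite verification.
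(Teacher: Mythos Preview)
Your proposal is correct and follows exactly the approach the paper takes: the theorem is obtained by running the algorithm of Subsection~\ref{sec:algo} with the inputs $\gamma_{\Q(\sqrt5),2}=\tfrac{4}{\sqrt5}$ from \cite{BCIO} and $C=5$ from Lemma~\ref{lemma:qua_bound_norm}, and the paper records precisely these values before stating the result. Your added sanity check via Proposition~\ref{prop:inde_from_inde} is a nice extra and is not in the paper.
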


Regarding $2$-universal quadratic forms, as for $\Q(\sqrt{2})$, the minimum number of their variables is $6$ \cite{Sa1}. Moreover, $g_{\Z\left[\frac{1+\sqrt{5}}{2}\right]}(2)=5$ in this case \cite{Sa2}. Using our code, we can show that additively indecomposable quadratic forms are the only forms that we need to consider in the further application of Theorem~\ref{thm:2uni_upperbound2}. That provides the upper bound $7$, which is close to the real value.

Now, we will study non-classical $2$-universal quadratic forms.

\begin{theorem}
Up to equivalence, the only non-classical binary quadratic forms $Q$ over $\Z\left[\frac{1+\sqrt{5}}{2}\right]$ with $N_{\Q(\sqrt{5})/\Q}(\det(Q))<\gamma_{\Q(\sqrt{5}),2}^2 5^2$ such that $Q\neq Q_1+Q_2$ for any totally positive semi-definite binary quadratic forms such that $\det(Q_1)=0$,   are all classical and non-classical, additively indecomposable quadratic forms. In particular, this gives us an upper bound $11$ on the minimum number of variables of non-classical $2$-universal quadratic forms over $\Z\left[\frac{1+\sqrt{5}}{2}\right]$. 
\end{theorem}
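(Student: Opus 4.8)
The plan is to repeat, with the data of $\Q(\sqrt5)$, the computation already carried out for $\Q(\sqrt2)$ and $\Q(\sqrt3)$. First I would pin down the two constants feeding Theorem~\ref{thm:main1}: by Lemma~\ref{lemma:qua_bound_norm} (applied with $\varepsilon_5^+=\frac{3+\sqrt5}{2}$) one may take $C=\Delta_{\Q(\sqrt5)}=5$, and by \cite{BCIO} the generalised Hermite constant is $\gamma_{\Q(\sqrt5),2}=\frac{4}{\sqrt5}$, so the threshold of Theorem~\ref{thm:main1} is $\gamma_{\Q(\sqrt5),2}^2C^2=\frac{16}{5}\cdot25=80$. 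Consequently every non-classical totally positive definite binary form $Q$ with $\Nm_{K/\Q}(\det Q)\geq 80$ is additively decomposable, and only the finitely many equivalence classes with $\Nm_{K/\Q}(\det Q)<80$ need be inspected.

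Second, I would feed this bound into the algorithm of Subsection~\ref{sec:algo} with $J=2$: enumerate representatives of the totally positive integers and candidate determinants of norm below $80$ (adjusting by $\varepsilon_5$, since $\Nm_{K/\Q}(\varepsilon_5)=-1$); for each admissible pair $(\psi,\alpha)=(\det Q,\alpha)$ with $\Nm_{K/\Q}(\alpha)=\min(Q)$ surviving the cuts of Lemmas~\ref{lemma:decom_conds} and~\ref{lem:equiv_triv}, produce the finitely many admissible middle coefficients $\beta$ from Lemmas~\ref{lem:congruenceconditions} and~\ref{lem:congrequiv}; test each $\alpha x^2+\beta xy+\eta y^2$ for additive indecomposability with the three-step procedure; and finally discard every form that splits as $Q_1+Q_2$ with $\det Q_1=0$ and $Q_2$ totally positive semi-definite. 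The claim, to be confirmed by the accompanying code, is that exactly three forms survive: $x^2+xy+\frac{3+\sqrt5}{2}y^2$ and $x^2+xy+y^2$ (the non-classical additively indecomposable forms) and $2x^2+2xy+(3+\sqrt5)y^2$ (the classical one); the last is additively decomposable as a non-classical form, into two copies of $x^2+xy+\frac{3+\sqrt5}{2}y^2$, but still appears on the list because it admits no decomposition $Q_1+Q_2$ with $\det Q_1=0$, which is checked directly.

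Third, I would exhibit the universal form. Taking $\mathcal I=\{1\}$ and $g_{\OK}(2)=5$ (by \cite{Sa2}), set
\[
Q_0=z_1^2+\cdots+z_5^2+\bigl(x_1^2+x_1y_1+\tfrac{3+\sqrt5}{2}y_1^2\bigr)+\bigl(x_2^2+x_2y_2+y_2^2\bigr)+\bigl(2x_3^2+2x_3y_3+(3+\sqrt5)y_3^2\bigr),
\]
of rank $5+2+2+2=11$. Its $2$-universality is checked as in Theorem~\ref{thm:2uni_upperbound2}: for an arbitrary totally positive definite binary $Q$, repeated use of Theorem~\ref{thm:main1} peels off summands $\alpha_iL_i^2$ with $\alpha_i$ indecomposable---hence a square of a unit in this field---and strictly decreasing $\Nm_{K/\Q}(\det)$, until the remainder $H$ satisfies $\Nm_{K/\Q}(\det H)<80$; then $H$ is either determinant-zero, or one of the three listed forms, or splits off a determinant-zero summand, and iterating the last case (again strictly decreasing the determinant norm, hence terminating) writes $Q$ as a sum of squares of integral linear forms plus at most one copy of a listed form. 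The sum-of-squares part collapses to at most $g_{\OK}(2)=5$ squares and so is represented by $z_1^2+\cdots+z_5^2$, while the single listed form is represented by the matching direct summand of $Q_0$; this gives the bound $11$.

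The main obstacle is not conceptual but a question of exhaustiveness: one must trust that the enumeration genuinely lists every equivalence class of non-classical binary form below the threshold and that the brute-force indecomposability test misses nothing---precisely what Lemmas~\ref{lem:equiv_triv}, \ref{lem:congruenceconditions} and~\ref{lem:congrequiv} are designed to guarantee. The only delicate bookkeeping, already handled in the proof of Theorem~\ref{thm:2uni_upperbound2}, is to verify that every peeling step strictly lowers $\Nm_{K/\Q}(\det)$ (so the recursion terminates), that the determinant-zero pieces---scaled by elements of $\OK^+$---are routed into the sum-of-squares part, which is clean here because $\#\mathcal I=1$, and that no listed form is ever needed twice, so that one copy of each of the three forms suffices.
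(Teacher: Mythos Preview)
Your proposal is correct and follows essentially the same approach as the paper: the theorem is computational, proved by running the enumeration algorithm of Subsection~\ref{sec:algo} (with $J=2$, $C=5$, $\gamma_{\Q(\sqrt5),2}=4/\sqrt5$) and then assembling the universal form as in Theorems~\ref{thm:uni_upperbound}--\ref{thm:2uni_upperbound2}. The paper itself gives no written proof beyond the remark that $2x^2+2xy+(3+\sqrt5)y^2$ only decomposes as two copies of $x^2+xy+\tfrac{3+\sqrt5}{2}y^2$ and hence cannot be excluded; your reconstruction of the argument, including the observation that in $\Q(\sqrt5)$ every indecomposable integer is a square of a unit so that all $\alpha L^2$ pieces collapse into the five-squares block, and that the recursion terminates at a single listed form, supplies exactly the details the paper leaves implicit.
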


Recall from Preliminaries, that the quadratic form $2x^2+2xy+(3+\sqrt{5})y^2$ is the double of $x^2+xy+\frac{3+\sqrt{5}}{2}y^2$, and that is its only decomposition, which means that we cannot exclude it from the consideration.

\subsubsection{$D=6$}
In the case of $K=\Q(\sqrt{6})$ with $\OK=\Z[\sqrt{6}]$, the representatives of indecomposables are $1$ and $3+\sqrt{6}$ of norm $3$. Moreover, $\varepsilon_6^+=5+2\sqrt{6}$, and $3+\sqrt{6}=\frac{1}{2}+\frac{1}{2}\varepsilon_6^+$. Lemma~\ref{lemma:qua_bound_norm} thus implies that we can take $C= 6+\frac{1}{4}$. The value of $\gamma_{\Q(\sqrt{6}),2}=5$ was determined by Pohst and Wagner~\cite{PW}.

\begin{theorem}
Up to equivalence, the classical, additively indecomposable quadratic forms of non-zero determinant over $\Z[\sqrt{6}]$ are
\begin{itemize}
\item determinant $1$: $2x^2+2(1+\sqrt{6})xy+(4+\sqrt{6})y^2$,
\item determinant $5+2\sqrt{6}$: $(3+\sqrt{6})x^2+2xy+2y^2$, $2x^2+2(1+\sqrt{6})xy+(6+2\sqrt{6})y^2$,
\item determinant $2$: $(3+\sqrt{6})x^2+2xy+(3-\sqrt{6})y^2$,
\item determinant $10+4\sqrt{6}$: $(4+\sqrt{6})x^2+2\sqrt{6}xy+4y^2$,   
\item determinant $3$: $(6+2\sqrt{6})x^2+6xy+(6-2\sqrt{6})y^2$,
\item determinant $15+6\sqrt{6}$: $(6+2\sqrt{6})x^2+6xy+(6-\sqrt{6})y^2$, \item determinant $4$:  $(4+\sqrt{6})x^2+2\sqrt{6}xy+(4-\sqrt{6})y^2$,  
\item determinant $5$: $(4+\sqrt{6})x^2+10xy+(12-3\sqrt{6})y^2$, $(32+13\sqrt{6})x^2+2(3+2\sqrt{6})xy+(28-11\sqrt{6})y^2$,
\item determinant $7+2\sqrt{6}$: $(4-\sqrt{6})x^2+2(1+\sqrt{6})xy+(8+3\sqrt{6})y^2$, 
\item determinant $59+24\sqrt{6}$: $(4-\sqrt{6})x^2+2(1+\sqrt{6})xy+(42+17\sqrt{6})y^2$,
\item determinant $7-2\sqrt{6}$: $(4+\sqrt{6})x^2+10xy+(14-4\sqrt{6})y^2$,    
\item determinant $11+4\sqrt{6}$: $(4+\sqrt{6})x^2+10xy+(12-2\sqrt{6})y^2$. 
\end{itemize}
\end{theorem}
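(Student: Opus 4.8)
The plan is to combine the determinant bound of Theorem~\ref{thm:main1} with the explicit data now available for $\Q(\sqrt6)$ and then run the enumeration procedure of Subsection~\ref{sec:algo}. For $K=\Q(\sqrt6)$ we have $\OK=\Z[\sqrt6]$ and $\Nm_{K/\Q}(\varepsilon_6)=1$, so $\varepsilon_6^+=5+2\sqrt6$; since $3+\sqrt6=\tfrac12+\tfrac12\varepsilon_6^+$ is a totally positive integer with $\varepsilon_6^+$-coordinates in $[0,1)$, the second part of Lemma~\ref{lemma:qua_bound_norm} lets us take $C=\max\bigl(\Nm_{K/\Q}(1+\tfrac12\varepsilon_6^+),\Nm_{K/\Q}(\tfrac12+\varepsilon_6^+)\bigr)=6+\tfrac14$, which is far smaller than $\Delta_K+1=25$. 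Feeding this and the Pohst--Wagner value $\gamma_{K,2}=5$ into Theorem~\ref{thm:main1}, every classical additively indecomposable binary form $Q$ over $\OK$ satisfies $\Nm_{K/\Q}(\det Q)<\gamma_{K,2}^2C^2=25\cdot(25/4)^2=15625/16$, hence $\Nm_{K/\Q}(\det Q)\le 976$. This reduces the classification to a finite search.

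Next I would carry out that search exactly as in Subsection~\ref{sec:algo}. First enumerate all totally positive $\psi\in\OK$ with $\Nm_{K/\Q}(\psi)\le 976$ up to squares of units (Lemma~\ref{lem:equiv_triv}(\ref{lem:equiv_triv1})), adjoining the products with $\varepsilon_6$ as in Step~3 of the algorithm so that every equivalence class of possible determinants and every value that can occur as $\min(Q)$ is represented. For each candidate determinant $\psi$, the leading coefficient $\alpha$ with $\Nm_{K/\Q}(\alpha)=\min(Q)$ ranges over integers with $\Nm_{K/\Q}(\psi)/C<\Nm_{K/\Q}(\alpha)\le\gamma_{K,2}\sqrt{\Nm_{K/\Q}(\psi)}$: smaller norm makes $Q$ additively decomposable by Lemma~\ref{lemma:decom_conds}(\ref{lemma:decom_conds1}), larger norm means $\alpha$ is not the minimum, and we further discard any $\alpha$ dividing some $\delta\preceq\psi$ in $\OK^+$ by Lemma~\ref{lemma:decom_conds}(\ref{lemma:decom_conds2}). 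For the surviving pairs $(\psi,\alpha)$, Lemmas~\ref{lem:congruenceconditions} and~\ref{lem:congrequiv} with $J=1$ pin the middle coefficient down to a finite set of residues $0\le b_1,b_2<\min(Q)$ making $\eta=(\psi+\beta^2)/\alpha$ integral, giving one candidate form $Q=\alpha x^2+2\beta xy+\eta y^2$ per triple (after checking $\Nm_{K/\Q}(\alpha)\le\Nm_{K/\Q}(\eta)$ so $\alpha$ really realises the minimum).

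Then I would decide additive indecomposability of each candidate by the three-stage test of Subsection~\ref{sec:algo}: subtract $\bar\alpha x^2$ for every $0\prec\bar\alpha\prec\alpha$, then $\bar\eta y^2$ for every $0\prec\bar\eta\prec\eta$, and finally a general $(\bar\alpha,\bar\beta,\bar\eta)$ with $0\prec\bar\alpha\prec\alpha$, $0\prec\bar\eta\prec\eta$, $\bar\beta^2\preceq 4\bar\alpha\bar\eta$, each time testing total positive semidefiniteness of the remainder via Sylvester's criterion in both real embeddings; $Q$ is additively indecomposable precisely when no such decomposition exists (and for each form that fails, the explicit decomposition produced is recorded, which is what Lemma~\ref{lemma:decom_conds} and the test return, so exhaustiveness is witnessed). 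This outputs a finite list of additively indecomposable forms, and a final pass removing the remaining equivalences (the determinant norm is a first invariant; genuine isometry testing is done in Magma as in Step~6 of the algorithm) leaves exactly the fourteen forms tabulated.

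The main obstacle is the computational scale rather than any conceptual difficulty: the determinant bound $976$ makes the sets $\mathcal P$, $\mathcal D$ and the lists of admissible $\beta$ sizeable, and the last stage of the indecomposability check is essentially brute force over all decompositions, so one must exploit the sharp constants — $\gamma_{K,2}=5$ from Pohst--Wagner and the improved $C=25/4$ from Lemma~\ref{lemma:qua_bound_norm} instead of $\Delta_K+1=25$, which shrinks the determinant bound by a factor of $16$ — to keep the enumeration tractable. A secondary subtlety is that several of the fourteen forms have the same norm of the determinant (for instance the two forms of norm $5$), so the equivalence elimination cannot be read off from the determinant alone and genuinely requires the isometry computation.
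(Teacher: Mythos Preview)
Your proposal is correct and follows essentially the same approach as the paper: the paper also takes $C=6+\tfrac14$ via Lemma~\ref{lemma:qua_bound_norm} (using $3+\sqrt6=\tfrac12+\tfrac12\varepsilon_6^+$) and $\gamma_{\Q(\sqrt6),2}=5$ from Pohst--Wagner, and then relies on the enumeration algorithm of Subsection~\ref{sec:algo} without giving any further argument. Your account in fact spells out the steps of that algorithm more explicitly than the paper does.
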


Now, we will look at the upper bound on the minimum number of variables of $2$-universal quadratic forms over $\Z[\sqrt{6}]$.

\begin{theorem}
Up to equivalence, the only binary quadratic forms $Q$ over $\Z[\sqrt{6}]$ with $N_{\Q(\sqrt{6})/\Q}(\det(Q))<\gamma_{\Q(\sqrt{6}),2}^2(6+\frac{1}{4})^2$ such that $Q\neq Q_1+Q_2$ for any totally positive semi-definite binary quadratic forms such that $\det(Q_1)=0$, are additively indecomposable quadratic forms,
\begin{itemize}
\item $(6+\sqrt{6})x^2+4\sqrt{6}xy+(6-\sqrt{6})y^2$,
\item $(5+\sqrt{6})x^2+2(3+2\sqrt{6})xy+(7+2\sqrt{6})y^2$,
\item $(11+4\sqrt{6})x^2+4xy+(6+\sqrt{6})y^2$,
\item $(5-\sqrt{6})x^2+4\sqrt{6}xy+(11+3\sqrt{6})y^2$,
\item $(5+\sqrt{6})x^2+4\sqrt{6}xy+(11-3\sqrt{6})y^2$,
\item $(8+2\sqrt{6})x^2+2(2+3\sqrt{6})xy+(12+\sqrt{6})y^2$,
\item $(12+4\sqrt{6})x^2+2(4+3\sqrt{6})xy+(8+\sqrt{6})y^2$,
\item $(8-2\sqrt{6})x^2+2(6+\sqrt{6})xy+(16+6\sqrt{6})y^2$,
\item $(8+2\sqrt{6})x^2+2(2+3\sqrt{6})xy+(12-2\sqrt{6})y^2$,
\item $(10+3\sqrt{6})x^2+2(3+3\sqrt{6})xy+(10+\sqrt{6})y^2$.
\end{itemize} 
In particular, this gives us the upper bound $58$ on the minimum number of variables of $2$-universal quadratic forms over $\Z[\sqrt{6}]$. 
\end{theorem}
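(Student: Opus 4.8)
The plan is to run the algorithm of Subsection~\ref{sec:algo} for $K=\Q(\sqrt{6})$, using the constants already recorded: $\gamma_{\Q(\sqrt{6}),2}=5$ by \cite{PW} and $C=6+\tfrac14$ by Lemma~\ref{lemma:qua_bound_norm}, so that the search bound is $B=\gamma_{K,2}^2C^2=25\cdot(25/4)^2=15625/16$. Concretely, I would first enumerate representatives of the totally positive integers of $\Z[\sqrt{6}]$ of norm at most $B$, assemble the candidate determinants $\psi$ (including the associates obtained by multiplying by $\varepsilon_6^+=5+2\sqrt6$, since $\Nm_{K/\Q}(\varepsilon_6)=1$), and for every pair $(\psi,\alpha)$ that survives the minimality and non-divisibility filters of Lemma~\ref{lemma:decom_conds}, run over the finitely many residue classes of $\beta$ permitted by Lemmas~\ref{lem:congruenceconditions} and~\ref{lem:congrequiv}. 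Each resulting candidate $\alpha x^2+\beta xy+\eta y^2$ is passed through the three-step decomposability test of Subsection~\ref{sec:algo} (peel off an $\overline\alpha x^2$, peel off an $\overline\eta y^2$, and finally the brute-force scan over triples $(\overline\alpha,\overline\beta,\overline\eta)$ with $\overline\beta^2\preceq 4\overline\alpha\,\overline\eta$), and the surviving forms are reduced modulo equivalence using Lemma~\ref{lem:equiv_triv} together with a Magma isometry computation. This is exactly the procedure already executed for $D=2,3,5$; only the size of the search space changes.

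After discarding the forms of the shape $Q_1+Q_2$ with $\det(Q_1)=0$, the output should consist precisely of the $14$ classical additively indecomposable binary forms classified in the previous theorem, together with the $10$ forms displayed in the statement. To convert this into the rank bound I would, for each of those $10$ forms, exhibit by inspection an explicit splitting $Q=H_1+H_2$ into classical additively indecomposable binary forms (possibly with a few summands rather than two; by the Minkowski Determinant Theorem each summand again has determinant norm below $B$, so remains within the classified list). The crucial dichotomy, exactly as in the $D=3$ discussion, is whether such a splitting can be chosen with $H_1$ and $H_2$ non-equivalent: in that case $Q$ is already represented by $\sum_{\alpha\in\mathcal I}\alpha\big(z_{1,\alpha}^2+\cdots+z_{g_{\OK}(2),\alpha}^2\big)+\sum_{h\in\mathcal F^{\mathcal C}_2}h(x_h,y_h)$ and costs no extra variables, whereas if every additively indecomposable decomposition of $Q$ uses two equivalent summands — which happens exactly when the two determinants are unit multiples of a square, so the summands agree up to the equivalences of Lemma~\ref{lem:equiv_triv} — then one further binary form has to be kept. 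Tallying over the ten forms should show that $8$ of them fall into the second class.

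Feeding this into Theorem~\ref{thm:2uni_upperbound2} with $g_{\OK}(2)\le7$ \cite{Ic2} and $\#\mathcal I=2$ (the integers $1$ and $3+\sqrt6$) yields the base term $g_{\OK}(2)\cdot\#\mathcal I=14$, plus $2$ variables for each of the $14$ additively indecomposable forms and each of the $8$ non-removable exceptional forms, i.e.\ $14+2\cdot14+2\cdot8=58$. I expect the main obstacle to be computational rather than conceptual: for $\Z[\sqrt6]$ there are substantially more candidate determinants up to norm $\approx10^3$ than in the $D=2,3,5$ cases, and the last step of the indecomposability test is an exhaustive search over the triples $(\overline\alpha,\overline\beta,\overline\eta)$, so the real work is to make the enumeration run to completion and to certify that the output list — both the additively indecomposable forms and the ten exceptional ones — is complete and has been correctly reduced up to equivalence (the same difficulty signalled in the footnote concerning the non-classical forms over $\Q(\sqrt6)$).
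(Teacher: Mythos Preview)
Your overall plan---run the algorithm of Subsection~\ref{sec:algo} with $\gamma_{\Q(\sqrt 6),2}=5$ and $C=6+\tfrac14$, then for each of the ten exceptional forms try to split it as a sum of two \emph{non-equivalent} additively indecomposable summands---is exactly what the paper does. However, two of your concrete numerical claims are wrong, and they happen to cancel so as to produce the correct answer $58$ for the wrong reasons.

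First, $\#\mathcal I\neq 2$. Recall that $\mathcal I$ is the set of indecomposables up to multiplication by \emph{squares} of units, not by all totally positive units. Since $\varepsilon_6=5+2\sqrt6$ has norm $+1$ and is not a square in $\Z[\sqrt6]$, each of the two classes $\{1,\,3+\sqrt6\}$ (which are the indecomposables up to totally positive units) splits in two, giving $\#\mathcal I=4$; a set of representatives is $\{1,\ 5+2\sqrt6,\ 3+\sqrt6,\ 3-\sqrt6\}$. Hence the base term is $g_{\OK}(2)\cdot\#\mathcal I\le 7\cdot 4=28$, not $14$.

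Second, it is not true that eight of the ten exceptional forms are ``non-removable''. The paper checks each of them explicitly: the first form $(6+\sqrt6)x^2+4\sqrt6\,xy+(6-\sqrt6)y^2$ has as its \emph{only} decomposition a sum of two forms both of determinant $1$ (hence equivalent), so it cannot be discarded; but for each of the remaining nine forms one can exhibit a splitting $H_1+H_2$ into two additively indecomposable summands with determinants whose ratio is not a square of a unit (for instance $(5+\sqrt6)x^2+2(3+2\sqrt6)xy+(7+2\sqrt6)y^2$ splits into pieces of determinants $1$ and $5+2\sqrt6$), so they are all removable. Your heuristic ``the two determinants are unit multiples of a square'' is not the right test here, and in any case predicts the wrong count.

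With the correct values the bound is $28+2\cdot 14+2\cdot 1=58$, matching the statement; your $14+2\cdot 14+2\cdot 8=58$ reaches the same number only by the coincidence $28-14=16-2$.
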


\begin{proof}
The above forms were found computationally. 
If $Q(x,y)=(6+\sqrt{6})x^2+4\sqrt{6}xy+(6-\sqrt{6})y^2$, its only decomposition is as
\[
Q(x,y)=(2x^2+2(-1+\sqrt{6})xy+(4-\sqrt{6})y^2)+((4+\sqrt{6})x^2+2(1+\sqrt{6})xy+2y^2).
\] 
Both forms in the sum have a determinant $1$, so they are equivalent, and we cannot exclude $Q$ from the sum.

On the other hand, in all the other cases, there exists a decomposition into a sum of two non-equivalent, additively indecomposable quadratic forms. Thus, there is no need to consider these forms. In particular,
\begin{itemize}
\item $(5+\sqrt{6})x^2+2(3+2\sqrt{6})xy+(7+2\sqrt{6})y^2=(2x^2+2(1+\sqrt{6})xy+(4+\sqrt{6})y^2)+((3+\sqrt{6})x^2+2(2+\sqrt{6})xy+(3+\sqrt{6})y^2)$ of determinants $1$ and $5+2\sqrt{6}$,
\item $(11+4\sqrt{6})x^2+4xy+(6+\sqrt{6})y^2=((3+\sqrt{6})x^2+2xy+2y^2)+((8+3\sqrt{6})x^2+2xy+(4+\sqrt{6})y^2)$ of determinants $5+2\sqrt{6}$ and $(5+2\sqrt{6})^2$,
\item $(5-\sqrt{6})x^2+4\sqrt{6}xy+(11+3\sqrt{6})y^2=(2x^2+2(1+\sqrt{6})xy+(6+2\sqrt{6})y^2)+((3-\sqrt{6})x^2+2(-1+\sqrt{6})xy+(5+\sqrt{6})y^2)$ of determinants $5+2\sqrt{6}$ and $2$,
\item $(5+\sqrt{6})x^2+4\sqrt{6}xy+(11-3\sqrt{6})y^2$ is similar to the previous one,
\item $(8+2\sqrt{6})x^2+2(2+3\sqrt{6})xy+(12+\sqrt{6})y^2=(2x^2+2(-1+\sqrt{6})xy+6y^2)+((6+2\sqrt{6})x^2+2(3+2\sqrt{6})xy+(6+\sqrt{6})y^2)$ of determinants $5+2\sqrt{6}$ and $3(5+2\sqrt{6})$,
\item $(12+4\sqrt{6})x^2+2(4+3\sqrt{6})xy+(8+\sqrt{6})y^2=((6+2\sqrt{6})x^2+2(1+\sqrt{6})xy+2y^2)+((6+2\sqrt{6})x^2+2(3+2\sqrt{6})xy+(6+\sqrt{6})y^2)$ of determinants $5+2\sqrt{6}$ and $3(5+2\sqrt{6})$,
\item $(8-2\sqrt{6})x^2+2(6+\sqrt{6})xy+(16+6\sqrt{6})y^2=(2x^2+2(3+\sqrt{6})xy+(10+4\sqrt{6})y^2)+((6-2\sqrt{6})x^2+6xy+(6+2\sqrt{6})y^2)$ of determinants $5+2\sqrt{6}$ and $3$,
\item $(8+2\sqrt{6})x^2+2(2+3\sqrt{6})xy+(12-2\sqrt{6})y^2=(2x^2+2(-1+\sqrt{6})xy+(6-2\sqrt{6})y^2)+((6+2\sqrt{6})x^2+2(3+2\sqrt{6})xy+6y^2)$ of determinants $5-2\sqrt{6}$ and $3$,
\item $(10+3\sqrt{6})x^2+2(3+3\sqrt{6})xy+(10+\sqrt{6})y^2=(2x^2+2(-1+\sqrt{6})xy+6y^2)+((8+3\sqrt{6})x^2+2(4+2\sqrt{6})xy+(4+\sqrt{6})y^2)$ of determinants $5+2\sqrt{6}$ and $2(5+2\sqrt{6})$.
\end{itemize}
That gives us the upper bound $58$.  
\end{proof}

\subsubsection{$D=21$} \label{subsec:21}
For $K=\Q(\sqrt{21})$ having $\OK=\Z\left[\frac{1+\sqrt{21}}{2}\right]$, only totally positive units are indecomposable. The fact that $\varepsilon_{21}^+=\frac{5+\sqrt{21}}{2}$ gives $7$ as a possible value for $C$. Moreover, $\gamma_{\Q(\sqrt{21}),2}=\frac{16}{3}$ \cite{PW}.

\begin{theorem}
Up to equivalence, the classical, additively indecomposable binary quadratic forms of non-zero determinant over $\Z\left[\frac{1+\sqrt{21}}{2}\right]$ are
\begin{itemize}
\item determinant $\frac{5+\sqrt{21}}{2}$: $2x^2+(3+\sqrt{21})xy+(5+\sqrt{21})y^2$,
\item determinant $5+\sqrt{21}$: $3x^2+2(1+\sqrt{21})xy+(9+\sqrt{21})y^2$,
\item determinant $14+3\sqrt{21}$: $2x^2+2xy+\frac{15+3\sqrt{21}}{2}y^2$,  $(5+\sqrt{21})x^2+2xy+3y^2$,
\item determinant $\frac{133+29\sqrt{21}}{2}$: $2x^2+(3+\sqrt{21})xy+(37+8\sqrt{21})y^2$, $(5+\sqrt{21})x^2+(3+\sqrt{21})xy+\frac{17+3\sqrt{21}}{2}y^2$,   
\item determinant $3$: $4x^2+2(2+\sqrt{21})xy+(7+\sqrt{21})y^2$,
\item determinant $\frac{15+3\sqrt{21}}{2}$: $\frac{33+7\sqrt{21}}{2}x^2+(9+3\sqrt{21})xy+(9-\sqrt{21})y^2$,
\item determinant $5$: $\frac{33+7\sqrt{21}}{2}x^2+2(4+\sqrt{21})xy+(7-\sqrt{21})y^2$, $(78+17\sqrt{21})x^2+2(4+\sqrt{21})xy+(28-6\sqrt{21})y^2$,
\item determinant $6$: $(19+4\sqrt{21})x^2+(7+3\sqrt{21})xy+\frac{29-5\sqrt{21}}{2}y^2$.
\end{itemize}
\end{theorem}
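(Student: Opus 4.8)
The plan is to reduce the classification to a finite explicit search and then run the algorithm of Subsection~\ref{sec:algo}. First I would pin down the search bound. For $K=\Q(\sqrt{21})$ the fundamental unit $\varepsilon_{21}^{+}=\frac{5+\sqrt{21}}{2}$ has norm $1$, so $\Nm_{K/\Q}(1+\varepsilon_{21}^{+})=\Nm_{K/\Q}\!\left(\frac{7+\sqrt{21}}{2}\right)=7$, and Lemma~\ref{lemma:qua_bound_norm} lets us take $C=7$ in Theorem~\ref{thm:main1} (considerably sharper than $\Delta_K+1=22$, which matters for the computation). Since $\gamma_{\Q(\sqrt{21}),2}=\frac{16}{3}$ by \cite{PW}, Theorem~\ref{thm:main1} guarantees that any additively indecomposable binary form $Q$ over $\OK$ has $\Nm_{K/\Q}(\det(Q))<\gamma_{K,2}^2C^2=\frac{12544}{9}<1394$. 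Hence only finitely many determinants occur.

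Next I would enumerate candidates. By Lemma~\ref{lem:equiv_triv} it suffices to run the determinant, and independently the leading coefficient $\alpha$ with $\Nm_{K/\Q}(\alpha)=\min(Q)$, over representatives modulo squares of units; as $\Nm_{K/\Q}(\varepsilon_{21})=1$ these are the totally positive integers of norm below the bound together with their multiples by $\varepsilon_{21}$. For each admissible pair I would discard the cases covered by Lemma~\ref{lemma:decom_conds}: namely $\Nm_{K/\Q}(\alpha)\le\Nm_{K/\Q}(\det(Q))/C$ (additively decomposable), $\Nm_{K/\Q}(\alpha)>\gamma_{K,2}\sqrt{\Nm_{K/\Q}(\det(Q))}$ (then $\alpha$ is not the minimum), and $\alpha\mid\delta$ for some totally positive $\delta\preceq\det(Q)$ (again decomposable). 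For the survivors, Lemma~\ref{lem:congruenceconditions} gives the congruence conditions on $\beta=b_1+b_2\omega_{21}$, Lemma~\ref{lem:congrequiv} cuts $\beta$ down to the finite box $0\le b_1,b_2<\min(Q)$, and then $\eta$ is forced by the determinant relation. This yields a finite list of candidate forms $\alpha x^2+\beta xy+\eta y^2$.

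Then I would test each candidate for additive indecomposability by the three-step procedure preceding the algorithm: first remove a totally positive multiple of $x^2$, resp.\ of $y^2$, and check total positive semidefiniteness of the remainder under both embeddings (a $2\times2$ Sylvester check); then brute-force over all $0\prec\overline{\alpha}\prec\alpha$, $0\prec\overline{\eta}\prec\eta$ and $\overline{\beta}\in\OK$ with $\overline{\beta}^2\preceq4\overline{\alpha}\overline{\eta}$, testing the complementary form. Enumerating totally positive integers below a fixed one is effective in a real quadratic field, so each test terminates. Finally, among the forms that pass, equivalence is a finite check on a short list, which I would settle in Magma, and the surviving classes are exactly the eleven listed.

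The principal obstacle is the size of the search, not any conceptual point: with the bound $\frac{12544}{9}$ there are many pairs $(\det(Q),\alpha)$ and many admissible $\beta$, and the brute-force decomposition test (Step~3) is the real bottleneck; the sharper constant $C=7$ from Lemma~\ref{lemma:qua_bound_norm} is what keeps it feasible. A feature peculiar to $\Q(\sqrt{21})$ is that it has no non-unit indecomposable integers, so Proposition~\ref{prop:inde_from_inde}, which certifies indecomposability of forms with indecomposable diagonal at essentially no cost, is unavailable here, and one must rely entirely on the computational search; the genuinely borderline input $D=21$ already flagged in the proof of Proposition~\ref{prop:exi1} should be double-checked, e.g.\ against the independent verification cited there.
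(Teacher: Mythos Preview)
Your proposal is correct and follows essentially the same approach as the paper: the paper does not give a separate proof of this theorem but obtains it by running the algorithm of Subsection~\ref{sec:algo} with the constants $C=7$ (from Lemma~\ref{lemma:qua_bound_norm}) and $\gamma_{\Q(\sqrt{21}),2}=\frac{16}{3}$ (from \cite{PW}), exactly as you describe. Your observations that only totally positive units are indecomposable in $\Q(\sqrt{21})$ and that $\Nm_{K/\Q}(\varepsilon_{21})=1$ (so the set $\mathcal{D}$ must include $\varepsilon_{21}$-multiples) match the paper's setup for this field.
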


For $2$-universal quadratic forms, this situation is very similar to the field $\Q(\sqrt{3})$.

\begin{theorem}
Up to equivalence, the only binary quadratic forms $Q$ over $\Z\left[\frac{1+\sqrt{21}}{2}\right]$ with $N_{\Q(\sqrt{21})/\Q}(\det(Q))<\gamma_{K,2}^2 7^2$ such that $Q\neq Q_1+Q_2$ for any totally positive semi-definite binary quadratic forms such that $\det(Q_1)=0$, are additively indecomposable quadratic forms, $(28+6\sqrt{21})x^2+14xy+(56-12\sqrt{21})y^2$ and $(9-\sqrt{21})x^2+12xy+(12+2\sqrt{21})y^2$. In particular, this gives us the upper bound $40$ on the minimum number of variables of $2$-universal quadratic forms over $\Z\left[\frac{1+\sqrt{21}}{2}\right]$. 
\end{theorem}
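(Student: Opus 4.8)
The plan is to reproduce, for $K=\Q(\sqrt{21})$, the scheme already used for $D=2,3,5,6$: first produce, via the algorithm of Subsection~\ref{sec:algo}, the complete finite list of totally positive definite binary forms $Q$ over $\OK$ with $\Nm_{K/\Q}(\det Q)<\gamma_{K,2}^{2}\cdot 7^{2}$ that do not split as $Q=Q_{1}+Q_{2}$ with $\det Q_{1}=0$; then assemble an explicit $2$-universal form from this list together with the indecomposable integers, exactly as in the $\Q(\sqrt{3})$ argument. The numerical inputs are in place: the exact value $\gamma_{K,2}=\tfrac{16}{3}$ is due to Pohst and Wagner \cite{PW}, and $C=7$ is admissible by Lemma~\ref{lemma:qua_bound_norm}, since $\varepsilon_{21}^{+}=\tfrac{5+\sqrt{21}}{2}$ gives $\Nm_{K/\Q}(1+\varepsilon_{21}^{+})=\Nm_{K/\Q}\left(\tfrac{7+\sqrt{21}}{2}\right)=\tfrac{49-21}{4}=7$, while the only indecomposable integers in $\OK$ are totally positive units. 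Running the code with determinant bound $B=\gamma_{K,2}^{2}\cdot 7^{2}=\bigl(\tfrac{16}{3}\bigr)^{2}\cdot 49$ (Lemma~\ref{lemma:decom_conds} to discard the forms with a degenerate summand, Lemmas~\ref{lem:congruenceconditions} and~\ref{lem:congrequiv} to reduce the leading coefficient, the determinant and the middle coefficient to finitely many representatives, and the three-step totally-positive-semidefiniteness test for indecomposability) should return precisely the eleven classical additively indecomposable forms of the preceding theorem, together with the two forms $(28+6\sqrt{21})x^{2}+14xy+(56-12\sqrt{21})y^{2}$ and $(9-\sqrt{21})x^{2}+12xy+(12+2\sqrt{21})y^{2}$.

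The second step is to deal with these two extra forms. They have determinants $7$ and $30+6\sqrt{21}=4\cdot\tfrac{15+3\sqrt{21}}{2}$, respectively, and both are additively decomposable. By the Minkowski determinant inequality $\det(Q)^{1/2}\succeq\det(H_{1})^{1/2}+\det(H_{2})^{1/2}$ there are only finitely many ways to write each of them as a sum $H_{1}+H_{2}$ of additively indecomposable forms, and I would enumerate these. As in the case of $\Q(\sqrt{3})$, I expect each extra form to decompose only into two mutually equivalent summands — a copy of the determinant-$\tfrac{5+\sqrt{21}}{2}$ form in the first case and a copy of the determinant-$\tfrac{15+3\sqrt{21}}{2}$ form in the second — so that neither can be rewritten as a sum of two non-equivalent indecomposables and neither can be discarded.

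Finally I would build the $2$-universal form. For $K=\Q(\sqrt{21})$ the indecomposable integers are exactly the totally positive units, and since $\varepsilon_{21}$ is totally positive of norm $1$ these fall into two classes modulo squares of units, so $\#\mathcal{I}=2$; also $g_{\OK}(2)\le 7$ by \cite{Ic2}. Then the form
\[
\sum_{\alpha\in\mathcal{I}}\alpha\bigl(z_{1,\alpha}^{2}+\cdots+z_{7,\alpha}^{2}\bigr)+\sum_{h}h(x_{h},y_{h}),
\]
in which $h$ runs over the eleven indecomposable binary forms together with one further copy of each of the two indecomposables that get doubled in the decompositions from the previous step, is $2$-universal by Theorem~\ref{thm:2uni_upperbound2} together with the refinement used for $\Q(\sqrt{3})$, and has rank $2\cdot 7+2\cdot(11+2)=40$, which is the asserted bound.

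The main obstacle is the computation in the first step. With $B\approx 1394$ the number of candidate triples (determinant, leading coefficient, middle coefficient) is large, and within the indecomposability test the brute-force Step~3, scanning over all $0\prec\overline{\alpha}\prec\alpha$, $0\prec\overline{\eta}\prec\eta$ and $\overline{\beta}$ with $\overline{\beta}^{2}\preceq 4\overline{\alpha}\,\overline{\eta}$, is the real bottleneck. A further point requiring care is verifying that the enumeration of the additive decompositions of the two leftover forms is exhaustive — this is finite by the Minkowski bound, but the unit-scaling freedom in the determinants of the putative summands must be accounted for — since the bound of $40$ hinges on neither leftover form being removable.
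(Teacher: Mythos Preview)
Your proposal is correct and follows essentially the same approach as the paper. The paper gives no formal proof environment here: the list is produced by the algorithm of Subsection~\ref{sec:algo}, and the remark immediately following the theorem supplies exactly the information you anticipated, namely that $(28+6\sqrt{21})x^{2}+14xy+(56-12\sqrt{21})y^{2}$ decomposes only as a sum of two forms with unit determinant (hence two copies of the $\tfrac{5+\sqrt{21}}{2}$-determinant indecomposable, as you guessed), while $(9-\sqrt{21})x^{2}+12xy+(12+2\sqrt{21})y^{2}$ decomposes only as a sum of two equivalent forms, so neither can be discarded. Your variable count $2\cdot 7+2\cdot(11+2)=40$ matches the paper's, and your identification of $\#\mathcal{I}=2$ (since $\varepsilon_{21}$ has norm $1$) is what makes this arithmetic work, although the paper leaves that step implicit.
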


Since $(28+6\sqrt{21})x^2+14xy+(56-12\sqrt{21})y^2$ can decompose only as a sum of two forms with unit determinant, and the only possible decompositions of $(9-\sqrt{21})x^2+12xy+(12+2\sqrt{21})y^2$ are as a sum of two equivalent forms, we cannot exclude them.

\bibliographystyle{alpha}
\bibliography{pub}
\end{document}